\documentclass[11pt]{article}
\usepackage[utf8]{inputenc}
\usepackage{parskip}
\usepackage[dvipsnames]{xcolor}
\usepackage[T1]{fontenc}
\usepackage{array}
\usepackage[utf8]{inputenc}
\usepackage[english]{babel}
\usepackage{pgfplots}
\usepackage{wrapfig}
\usepackage{graphicx,wrapfig,lipsum}
\usepackage{float}    
\usepackage{verbatim} 
\usepackage{amsmath, amssymb, amsthm, mathtools}
\usepackage{caption}
\usepackage{subcaption}
\usepackage[colorlinks,linkcolor=purple, citecolor=Green, urlcolor=magenta, anchorcolor=ForestGreen,bookmarks=False]{hyperref}
\usepackage{cleveref}
\usepackage{bookmark}
\usepackage{fullpage}
\usepackage{enumerate}
\usepackage{paralist}
\usepackage{xspace}
\usepackage{bbm}
\usepackage{bm}
\usepackage{url}
\usepackage{lipsum}
\usepackage{algorithm, algorithmic}
\usepackage{color}
\usepackage{microtype}
\usepackage[section]{placeins}
\usepackage{lscape}
\usepackage{multirow}
\usepackage{todonotes}
\usepackage{enumitem}
\usepackage{booktabs}
\usepackage{accents}
\setlist[enumerate,1]{label=(\roman*)}
\makeatletter

\DeclareMathOperator{\poly}{poly}

\newcommand{\norm}[1]{\left\lVert#1\right\rVert}

\newcolumntype{L}[1]{>{\raggedright\arraybackslash}p{#1}}
\newcolumntype{C}[1]{>{\centering\arraybackslash}p{#1}}
\definecolor{secgray}{gray}{0.88}

\newcommand{\cA}{\mathcal{A}}

\newcommand{\cK}{\mathcal{K}}

\newcommand{\cP}{\mathcal{P}}

\newcommand{\cR}{\mathcal{R}}
\newcommand{\cS}{\mathcal{S}}

\newcommand{\cW}{\mathcal{W}}
\newcommand{\cX}{\mathcal{X}}

\newcommand{\cZ}{\mathcal{Z}}

\newcommand{\bP}{\mathbb{P}}
\newcommand{\bQ}{\mathbb{Q}}
\newcommand{\bR}{\mathbb{R}}

\newtheorem{theorem}{Theorem}
\newtheorem{definition}{Definition}
\newtheorem{lemma}{Lemma}

\newtheorem{remark}{Remark}

\newtheorem{proposition}{Proposition}

\newtheorem{assumption}{Assumption}

\def \RR {\mathbb{R}}

\def \NN {\mathbb{N}}

\def \QQ {\mathbb{Q}}

\newcommand{\EE}{\mathbb{E}}

\newcommand{\OT}{\mathrm{OT}_c}

\DeclareMathOperator*{\argmax}{argmax}
\DeclareMathOperator*{\argmin}{argmin}

\title{Oracle-Based Distributionally Robust Optimization under \\ Optimal Transport Ambiguity Sets}

\date{}
\author{
Guixian Chen\\
University of Michigan\\
\texttt{gxchen@umich.edu}\\
\and
Salar Fattahi\\
University of Michigan\\
\texttt{fattahi@umich.edu}
\and 
Soroosh Shafiee\\
Cornell University\\
\texttt{shafiee@cornell.edu}
}

\pgfplotsset{compat=1.18}
\allowdisplaybreaks

\begin{document}

\maketitle


\begin{abstract}
Distributionally robust optimization (DRO) with optimal transport ambiguity sets is traditionally solved by reformulating the minimax problem into a single-level convex program. While theoretically tractable, these reformulations introduce numerous auxiliary variables and demanding conic constraints that scale poorly in practice. In this paper, we address this challenge by reducing the inner worst-case expectation problem exactly to a scalar budget allocation task. This structural insight yields an efficient algorithm that bypasses large lifted reformulations, alongside a fast post-processing scheme to recover an optimal worst-case distribution supported on at most $N+1$ points, where $N$ denotes the sample size. We embed this procedure within an oracle-based distributional best-response framework to directly compute an approximate primal-dual solution to the overall DRO problem. Furthermore, we extend our analysis to the dual DRO formulation, proving the existence of a least-favorable distribution supported on at most $\min\{N+n+1, KN\}$ atoms, where $n$ and $K$ denote the decision dimension and number of loss components, respectively, and provide an efficient convex programming reduction to extract it from the solution of the primal DRO. Numerical experiments demonstrate that the proposed approach significantly outperforms state-of-the-art reformulation-based solvers.
\end{abstract}


\section{Introduction}
\label{sec:intro}
Given data, distributionally robust optimization (DRO) replaces a single reference distribution by an ambiguity set $\cP$ of plausible distributions and solves the minimax problem
\begin{align}
\label{eq:dro}
    \inf_{x \in \cX} \; \sup_{\bQ \in \cP} \; \EE_{z \sim \bQ}[\ell(x,z)].
\end{align}
This problem can be viewed as a zero-sum game between a decision maker (also referred to as {\it primal player}) choosing $x \in \cX$ and an adversary (also referred to as {\it dual player}) choosing a worst-case distribution $\bQ \in \cP$. Associated with \eqref{eq:dro} is the maximin problem
\begin{align}
\label{eq:dual:dro}
    \sup_{\bQ \in \cP} \; \inf_{x \in \cX} \; \EE_{z \sim \bQ}[\ell(x,z)],
\end{align}
which we refer to as the dual DRO problem. 
When strong duality holds, problems \eqref{eq:dro} and \eqref{eq:dual:dro} share the same value. While the minimax formulation \eqref{eq:dro} has been studied extensively, its maximin counterpart \eqref{eq:dual:dro} is much less explored, especially from a computational viewpoint.

From an algorithmic viewpoint, the standard approach to solve~\eqref{eq:dro} is \emph{indirect}. One first reformulates the inner worst-case expectation problem as a finite-dimensional convex optimization problem, typically using duality and tools from robust optimization, and then solves the resulting single-level formulation with an off-the-shelf solver. This paradigm has led to important tractability results in DRO. At the same time, it often produces large lifted formulations with many auxiliary variables and constraints, and in several important cases the reformulated problems belong to more demanding conic classes, inevitably relying on general conic solvers. As a result, these methods can become computationally prohibitive even at moderate scale. In contrast, for the dual DRO problem~\eqref{eq:dual:dro}, even basic computational approaches are not yet well developed. Our goal in this paper is not to derive another reformulation of~\eqref{eq:dro}. Instead, we ask whether both~\eqref{eq:dro} and~\eqref{eq:dual:dro} can be solved \emph{directly} using tailored and scalable algorithms that take advantage of the unique structure of the problem. 

We answer this question for ambiguity sets constructed from optimal transport (OT) discrepancy around the empirical distribution. Our approach combines structural and algorithmic ingredients. On the structural side, we show that both the primal and dual DRO problems admit small-support worst-case distributions, albeit of different sizes. On the algorithmic side, we exploit this structure to design oracle-based first-order methods that avoid repeatedly solving large lifted reformulations. 

\subsection{Summary of Contributions}
The key contributions of the paper are summarized below. 

\begin{enumerate}[label=$\diamond$]
    \item We begin by revisiting the inner maximization in~\eqref{eq:dro} for a fixed primal decision $x$. When the empirical distribution is supported on $N$ points, existing DRO results guarantee an optimal worst-case distribution supported on at most $N+1$ points~\cite{yue2022linear,gao2023distributionally}. We efficiently recover such a distribution when the loss is convex-piecewise concave. Our key insight is that computing the worst-case distribution is equivalent to a scalar budget allocation problem, which allows the worst-case expectation problem to be solved by optimizing over scalar budget variables alongside small structured subproblems, avoiding large-scale conic optimization. Building on this, we propose an efficient algorithm that returns an $\epsilon$-approximate worst-case distribution supported on $N+1$ points using only $O(\poly\log(1/\epsilon))$ cheap oracle calls.

    \item We revisit the primal DRO problem with OT ambiguity sets through the lens of convex-concave saddle-point optimization and develop an oracle-based distributional best-response framework for solving it. At each iteration, the dual player uses an oracle to compute its best response, i.e., a worst-case distribution corresponding to the current primal decision, while the primal player performs a first-order oracle update against that distribution. This perspective works directly with the saddle-point structure of the primal DRO problem and avoids solving large-scale reformulations.
    As a special case, when the proposed budget-allocation-based method and projected gradient descent are used as the dual and primal oracles, respectively, the resulting algorithm computes an $\epsilon$-approximate saddle point in $O(1/\epsilon^2)$ iterations and directly returns both an approximately optimal primal decision and an associated worst-case distribution.

    \item Although the optimal worst-case distribution is supported on at most $N+1$ points for any fixed primal decision $x$, the proposed best-response algorithm computes a running average, meaning it converges to a worst-case distribution whose support size grows with the number of iterations. To address this issue, we turn to the dual formulation. In particular, we extend our finite-dimensional reformulation technique to the dual DRO problem~\eqref{eq:dual:dro}. In this setting, we prove the existence of an optimal least-favorable distribution supported on at most $\min\{N+n+1, KN\}$ points. We show this bound is tight, which reveals two previously unknown facts: (i) when the decision dimension $n$ is moderate, the least-favorable distribution is significantly sparser than the previously known bound $KN$; and (ii) least-favorable distributions with an optimal primal decision are provably denser than worst-case distributions with a fixed primal decision. We also propose a post-processing method that, given the output of the best-response algorithm, efficiently computes a least-favorable distribution supported on at most $N+n+1$~points.

    \item Finally, we conduct extensive numerical experiments and show that the proposed oracle-based algorithms significantly outperform reformulation-based approaches implemented in state-of-the-art solvers. These results demonstrate that exploiting the oracle structure of the inner problem can lead to substantial practical gains over generic reformulation-based methods.
\end{enumerate}

\subsection{Related Works}
\paragraph{Extremal Distributions and Support Bounds.}
Although OT problems can be computationally intractable even when one of the distributions is discrete \cite{tacskesen2023semi,tacskesen2023discrete}, the structure of extremal distributions in worst-case risk evaluation is by now fairly well understood \cite{gao2023distributionally,owhadi2017extreme,wozabal2012framework}. In particular, when the Wasserstein ball is centered at a discrete distribution supported on $N$ atoms, a sequence of works has progressively tightened the support bound for an optimal worst-case distribution: from $N+3$ atoms in \cite[Theorem~3]{wozabal2012framework}, to $N+2$ atoms in \cite[Theorem~2.3]{owhadi2017extreme}, and finally to $N+1$ atoms in \cite[Corollary~1]{gao2023distributionally}. However, an efficient method for computing such an $N+1$ point optimizer has remained unavailable. For convex-piecewise concave losses with $K$ pieces, \cite[Theorem~4.4]{mohajerin2018data} gives a finite convex reformulation that produces a worst-case distribution supported on at most $KN$ points. We close this gap by developing an efficient approach that constructs an optimal worst-case distribution with only $N+1$ support points. We further show that, for the dual DRO problem~\eqref{eq:dual:dro}, there exists an optimal least-favorable distribution supported on at most $\min\{N+n+1, KN\}$ points under the same loss assumption. This structural result strictly improves upon the generic $KN$-point bound established in \cite[Theorem~2]{shafiee2025nash} whenever the primal decision dimension $n$ is moderate.

\paragraph{Algorithms for OT-based DRO.}
Most computational approaches to OT-based DRO solve problem~\eqref{eq:dro} indirectly, that is, by first deriving a single-level reformulation through duality and then applying an algorithm to the resulting optimization problem. This includes specialized first-order methods for particular models such as distributionally robust logistic regression and support vector machines \cite{li2019first,li2020fast}, as well as distributed methods that exploit structure in the reformulated problem when the loss is convex-concave or convex-convex in the decision and uncertainty variables \cite{cherukuri2020cooperative,li2020data}. For general reference distributions, inexact stochastic gradient methods have also been developed based on reformulations of the DRO problem \cite{sinha2018certifying,blanchet2018optimal,shafiee2025nash}. In contrast, our approach does not rely on solving a reformulated single-level problem. Instead, we work directly with the minimax structure and develop a primal-dual method that updates the primal decision against an explicit worst-case distribution. The key ingredient is an efficient worst-case scenario oracle for convex-piecewise concave losses. While oracle-based ideas have also appeared in robust optimization \cite{ben2015oracle}, we use the oracle in a different way. Namely, the oracle helps construct an $N+1$-point worst-case distribution, which is then embedded into a distributional best-response scheme for the primal DRO problem and further extended to the dual DRO problem. 

\paragraph{Algorithms for Robust Optimization.} 
Our work is closely related to oracle-based robust optimization and online convex optimization. \cite{ben2015oracle} showed that robust optimization can be approached through online learning and repeated oracle calls, and \cite{ho2018online,ho2019exploiting} developed first-order frameworks that treat robust optimization as a semi-infinite problem and reduce it to iterative feasibility or separation computations. More recent methods by \cite{postek2024first} and \cite{tu2024max} pursue large-scale robust optimization through perspective or Lagrangian reformulations. Our work shares the same algorithmic objective, but avoids expensive one-shot reformulations and tailors the oracle-based viewpoint to DRO with OT ambiguity sets, where the adversary selects a probability distribution rather than a finite-dimensional uncertainty vector.

\paragraph{Dual DRO Problem.}
While the primal DRO problem~\eqref{eq:dro} is the dominant computational route to a robust decision~\cite{kuhn2025distributionally}, the dual~\eqref{eq:dual:dro} characterizes the least-favorable distribution. The main computational difficulty is that the dual DRO problem maximizes a concave pointwise infimum over an infinite-dimensional ambiguity~set. Nevertheless, least-favorable distributions have been characterized in mean square error estimation and Kalman filtering under both Wasserstein~\cite{nguyen2023bridging,shafieezadeh2018wasserstein} and information-theoretic divergence~\cite{levy2004robust,levy2012robust,zorzi2016robust,zorzi2017robustness} ambiguity sets. We study the dual DRO problem under the same structural assumptions in \cite{mohajerin2018data,shafiee2025nash}, and establish a refined theoretical bound on the minimal support size of least-favorable distributions by carefully analyzing the equilibrium conditions.

\paragraph{First-Order Methods for DRO Problems.}
Finite-dimensional minimax optimization problems can be solved efficiently using projection-based methods \cite{nedic2009subgradient,nemirovski2009robust,xu2023unified}, projection-free algorithms \cite{boroun2023projection,giang2026projection}, or online convex optimization~\cite{orabona2019modern}.
Infinite-dimensional problems over probability distributions have also been studied recently, both for standard minimization \cite{chizat2018global,chizat2022sparse,eftekhari2019sparse,kent2021modified,yu2025deterministic} and for minimax problems \cite{sheriff2025nonlinear,liu2025convergence,xu2024flow,lanzetti2022first,lanzetti2024variational}.
For $f$-divergence ambiguity sets with discrete support, the adversary's decision variable lies on a probability simplex, leading to finite-dimensional formulations that can be solved by primal-dual methods~\cite{namkoong2016stochastic,aigner2023data,qi2021online}. 
For hybrid $f$-divergence-OT ambiguity sets such as the Sinkhorn ambiguity set, stochastic gradient descent and Langevin-based primal-dual methods have been recently developed \cite{wang2025sinkhorn,azizian2023regularization,wang2024regularization,wang2025iterative,vincent2024texttt}.
For OT ambiguity sets, the same minimax viewpoint is substantially more challenging as the adversary optimizes over the infinite-dimensional probability space. Our contribution is to show that, for convex-piecewise concave losses, the best-response framework can be generalized to solve DRO problems efficiently.

\subsection{Notation and Outline}

The set of positive integers up to $n \in \mathbb{N}$ is denoted by $[n]$. We write $\cP(\cZ)$ for the family of Borel probability measures on $\cZ \subseteq \RR^m$. 
If $f$ is proper, convex, and lower semicontinuous, then its recession function $f^{\infty}: \RR^m \rightarrow \RR \cup \{\infty\}$ is defined by $f^{\infty}(z) = \lim_{\alpha\rightarrow \infty} \alpha^{-1}(f(z_0 + \alpha z) - f(z_0))$, where $z_0$ is any point in $\text{dom}(f)$ \cite[Theorem~8.5]{rockafellar1970convex}. The perspective of $f$ is the function $f^\pi: \RR^{m} \times \RR_+ \rightarrow \RR \cup \{\infty\}$ defined by $f^\pi(z, t) = t f(z/t)$ if $t > 0$, and $f^\pi(z, t) = f^{\infty}(z)$ if $t = 0$. One can show that $f^\pi$ is proper, convex, and lower semicontinuous \cite[page~67]{rockafellar1970convex}. Without loss of generality, we use $t f(z/t)$ to denote $f^\pi(z, t)$ even if $t=0$. Moreover, the conjugate of $f$~is~defined~as $f^*(y) = \sup_{z\in \RR^m}\{\langle y,z\rangle-f(z)\}$, which is proper, convex, and lower semicontinuous~\cite[page~104]{rockafellar1970convex}.

The remainder of the paper is organized as follows. In Section~\ref{sec:setup}, we introduce the technical preliminaries and assumptions. In Section~\ref{sec::worst_case}, we focus on a key component of our analysis: the worst-case expectation problem, i.e., the inner maximization problem in \eqref{eq:dro}. We establish its equivalence to the classical budget allocation problem. We leverage this equivalence to (i) characterize the size and structure of the support of the worst-case distribution, and (ii) design an efficient algorithm for computing it. 
In Section~\ref{sec::primal-DRO}, we show how this algorithm can be used as an oracle within a distributional best-response framework to efficiently solve the primal DRO problem~\eqref{eq:dro}. In Section~\ref{sec::dual_dro}, we further demonstrate how the resulting worst-case distribution can be sparsified by solving the equivalent dual DRO problem~\eqref{eq::dual_DRO}. All complexity results are reported using standard big-$O$ and $\widetilde{O}$ notations, where the latter suppresses logarithmic factors for clarity. Explicit constants are provided in the appendix. Finally, we conclude with numerical experiments in Section~\ref{sec::numerics}.


\section{Problem Setup and Assumptions}
\label{sec:setup}

Throughout the paper, we rely on the following notion of optimal transport discrepancy, which also specifies the standing conditions imposed on the transportation cost function.

\begin{definition}
\label{def::cost_OT}
    A transportation cost function is any lower semicontinuous function $c: \cZ \times \cZ \rightarrow \bR_+$ satisfying $c(z, z) = 0$ for all $z \in \cZ$, where $c(z, \hat{z})$ is convex in $z$ for every fixed $\hat{z} \in \cZ$. The optimal transport discrepancy $\OT: \cP(\cZ) \times \cP(\cZ) \rightarrow \RR_+$ associated with $c$ is defined as
    \begin{align*}
        \OT(\bP, \bQ)
        =
        \min_{\gamma \in \Gamma(\bP, \bQ)}
        \EE_{(z, \hat{z}) \sim \gamma} [c(z, \hat{z})],
    \end{align*}
    where $\Gamma(\bP, \bQ) := \{\gamma \in \cP(\cZ \times \cZ) : \gamma(\cdot \times \cZ) = \bP,\, \gamma(\cZ \times \cdot) = \bQ\}$ is the set of all couplings of $\bP$ and~$\bQ$.
\end{definition}
The real-valuedness assumption on $c$ is made only to simplify the exposition. It can be relaxed to extended-valued costs, provided each data point $\hat z_i$ satisfy the relative-interior and Slater-type conditions required for the convex reformulation results. 

Suppose we are given a dataset $\{\hat{z}_i\}_{i = 1}^{N} \subseteq \cZ$. Let $\delta_{\hat{z}_i}$ denote the Dirac measure at $\hat{z}_i$, and let 
\begin{align*}
    \hat{\bP} = \frac{1}{N} \sum_{i = 1}^{N} \delta_{\hat{z}_i}
\end{align*}
be the empirical distribution.
Given a radius $\rho > 0$, we define the OT ambiguity set around $\hat{\bP}$ as
\begin{align}
\label{eq::ambiguity_set}
    \cP
    :=
    \left\{
    \bP \in \cP(\cZ):
    \OT(\bP, \hat{\bP}) \leq \rho
    \right\}.
\end{align}

We impose the following standing assumptions.

\begin{assumption}[Regularity]
\label{asp::regularity}
The following conditions hold.
\begin{enumerate}[label=(\roman*)]
    \item \label{asp::regular::sets}
    The feasible region $\cX \subseteq \RR^n$ is nonempty and convex. There exist $\bQ_0 \in \cP$ and $x_0 \in \cX$ such that $\EE_{z\sim \bQ_0}[\ell(x, z)]$ is inf-compact in $x \in \cX$, and $\EE_{z\sim \bQ_0}[\ell(x_0, z)] > -\infty$. The support set $\cZ \subseteq \RR^m$ is nonempty, closed and convex. 

    \item \label{asp::regular::ell}
    The loss function takes the form $\ell(x, z) := \max_{k\in [K]} \ell_k(x, z)$, where for each fixed $k\in [K]$, the function $\ell_k: \bR^n \times \bR^m \to \bR$ is real-valued, convex in its first argument, and concave in its second. For notational convenience, we assume $K\geq2$; the case $K=1$ is covered by setting $\ell_2:=\ell_1$.

    \item \label{asp::regular::c}
    There exist a norm $\| \cdot \|$ on $\bR^m$ and an exponent $p \geq 1$ such that
    \begin{align*}
        c(z, \hat{z})
        \geq
        \| z - \hat{z} \|^p
        \qquad
        \forall z, \hat{z} \in \cZ.
    \end{align*}
    Moreover, if $p = 1$, then for every $x \in \cX$, there exist a constant $g > 0$, a reference point $\hat{z}_0 \in \cZ$, and an exponent $r \in [0, 1)$ such that $\ell(x, z) \leq g \left[ 1 + \| z - \hat{z}_0 \|^r \right]$.
\end{enumerate}
\end{assumption}

Assumption~\ref{asp::regularity}\ref{asp::regular::sets} imposes mild topological conditions on the decision and support sets. The inf-compactness requirement ensures that the sublevel sets $\{x \in \cX : \EE_{z\sim \bQ_0}[\ell(x, z)] \leq c\}$ are compact for all $c \in \RR$. This serves as a practical relaxation of requiring the entire feasible region $\cX$ to be bounded. Furthermore, this inf-compactness condition is readily satisfied whenever the expected loss is lower semicontinuous and coercive on $\cX$. That is, for any sequence $\{x_k\}_{k \in \NN} \subset \cX$ such that $\lim_{k\to \infty} \norm{x_k} = \infty$, we have $\lim_{k\to \infty} \EE_{z\sim \bQ}[\ell(x_k, z)] = \infty$.
Assumption~\ref{asp::regularity}\ref{asp::regular::ell} is the convex-piecewise concave structure introduced in~\cite[Assumption~4.1]{mohajerin2018data} for tractable DRO reformulations. Assumption~\ref{asp::regularity}\ref{asp::regular::c} guarantees well-posedness and attainment. Since each loss piece is real-valued and concave in $z$, it admits an affine upper bound in $z$. Hence, the loss grows at most linearly in the uncertainty. Therefore, when $p > 1$, this assumption implies that the transportation cost dominates the loss growth, thereby guaranteeing the optimal solution is well-posed and attained. The case $p = 1$ is more delicate because linear loss growth competes directly with linear transportation cost. In this case, the sublinear growth condition in Assumption~\ref{asp::regularity}\ref{asp::regular::c} ensures well-posedness and attainment. We note that, when $\cZ$ is compact, Assumption~\ref{asp::regularity}\ref{asp::regular::c} is automatically satisfied for any $r\in [0,1)$ and sufficiently large $g$. In Appendix~\ref{sec::piecewise-linear}, we provide a separate treatment of the case where both the transportation cost and loss function grow linearly, thereby violating Assumption~\ref{asp::regularity}\ref{asp::regular::c}.

It is worth noting that the choice $c(z,\hat{z}) = \|z - \hat{z}\|^p$, for some norm $\|\cdot\|$ on $\mathbb{R}^m$ and exponent $p \geq 1$, satisfies Assumption~\ref{asp::regularity}\ref{asp::regular::c} with equality. In this case, $\left(\OT(\bP, \bQ)\right)^{1/p}$ defines the so-called \emph{$p$-Wasserstein distance}, one of the most widely studied optimal transport discrepancies in the DRO literature. Beyond this canonical choice, Assumption~\ref{asp::regularity}\ref{asp::regular::c} also accommodates Mahalanobis-type costs $c(z,\hat{z}) = (z-\hat{z})^\top M (z-\hat{z})$ for a positive definite matrix $M$, which allow the geometry of
the ambiguity set to reflect the scale and correlation structure of the uncertainty \cite{blanchet2018optimal}.

Together, these assumptions guarantee that the primal and dual DRO problems are well defined and admit a saddle point. 

\begin{lemma}[Existence of Saddle Point]
\label{lem::existence_saddle}
    Under Assumption~\ref{asp::regularity}, the primal DRO problem~\eqref{eq:dro} and the dual DRO problem~\eqref{eq:dual:dro} have finite optimal values and admit a saddle point. In particular,
    \begin{align*}
        \min_{x \in \cX}
        \max_{\bQ \in \cP}
        ~ \EE_{z \sim \bQ}[\ell(x, z)]
        =
        \max_{\bQ \in \cP}
        \min_{x \in \cX}
        ~ \EE_{z \sim \bQ}[\ell(x, z)].
    \end{align*}
\end{lemma}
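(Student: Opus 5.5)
We will apply a Sion-type minimax theorem to the bilinear-in-$\bQ$ function $F(x,\bQ) := \EE_{z\sim\bQ}[\ell(x,z)]$ on $\cX\times\cP$. Here $F$ is convex in $x$, because each $\ell_k(\cdot,z)$ is convex and so is their pointwise maximum. It is affine, hence concave, in $\bQ$. The set $\cP$ is convex, since $\OT(\cdot,\hat\bP)$ is convex in its first argument (mix the couplings). Compactness will come from the $x$-side via the inf-compactness in Assumption~\ref{asp::regularity}\ref{asp::regular::sets}. The cleanest tool is the version of the minimax theorem that needs compactness of only one sublevel set: if there is $\bQ_0\in\cP$ and $c$ with $\{x\in\cX: F(x,\bQ_0)\le c\}$ nonempty and compact, and $F(\cdot,\bQ)$ is lsc for every $\bQ$, then $\inf_x\sup_\bQ F=\sup_\bQ\inf_x F$ and the outer infimum is attained. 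This holds with no topology needed on $\cP$. I will take this theorem as given, for instance in the form of \cite{shafiee2025nash} or the classical Ky Fan/Sion variants.

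\textbf{Well-definedness and finiteness.} First I check that $F$ is finite on $\cX\times\cP$. Each $\ell_k(x,\cdot)$ is real-valued and concave, so it has an affine majorant. Hence $\ell(x,z)\le a(x)+b(x)\|z-\hat z_1\|$. When $p>1$, the moment bound $\EE_\bQ\|z-\hat z_i\|^p\le$ const follows from $\OT(\bQ,\hat\bP)\le\rho$ and Assumption~\ref{asp::regularity}\ref{asp::regular::c}, so $\EE_\bQ\ell<\infty$ uniformly over $\cP$. When $p=1$, the growth bound $g(1+\|z-\hat z_0\|^r)$ with $r<1$ gives the same conclusion by Jensen. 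A lower bound is also needed, but with a caveat. Each $\ell(x,\cdot)\ge\ell_1(x,\cdot)$, which is concave and may tend to $-\infty$ only at a linear rate, so $\EE_\bQ\ell$ may be $-\infty$ only in degenerate cases. It is therefore safer to allow $F$ to take values in $[-\infty,\infty)$ and note that the minimax theorem still applies. Finiteness of the common value then follows from two bounds. The upper bound is $\sup_\bQ F(x_0,\bQ)<\infty$, from the uniform moment bound. The lower bound is $\inf_x F(x,\bQ_0)>-\infty$, because an lsc inf-compact function attains its minimum, and that minimum is finite given $F(x_0,\bQ_0)>-\infty$.

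\textbf{Lower semicontinuity.} I need lsc of $x\mapsto F(x,\bQ)$ for every $\bQ$. Since $\ell(\cdot,z)$ is finite and convex on $\RR^n$, it is continuous. Fatou's lemma then gives lsc, provided a locally uniform integrable minorant exists. I would get that minorant from convexity: on a neighborhood of $\bar x$, $\ell(x,z)\ge\ell(\bar x,z)+\langle s(z),x-\bar x\rangle$ for a subgradient $s(z)$. Bounding $\|s(z)\|$ integrably is the delicate part. An alternative is to replace $F$ by its lsc hull in $x$ and observe that this does not change $\inf\sup$ on the relevant compact sublevel set. I expect this measurability/semicontinuity technicality to be the main obstacle, and I would first try to handle it through the affine-in-$z$ bounds on the pieces.

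\textbf{Attainment on the $\bQ$ side.} The $x$-attainment comes directly from the theorem. For $\bQ$, set $\varphi(\bQ)=\inf_x F(x,\bQ)$. This is an infimum of functions $\bQ\mapsto\EE_\bQ\ell(x,\cdot)$ that are weakly usc: $\ell(x,\cdot)$ is continuous (concave and finite), and is bounded above by a function of growth order strictly less than $p$ when $p=1$, or at most linear when $p>1$, which is uniformly integrable over $\cP$. So $\varphi$ is weakly usc. Moreover, $\cP$ is weakly compact: it is tight by the moment bound, and closed by lsc of $c$. Hence $\varphi$ attains its maximum at some $\bQ^\star$. Together with $x^\star$ from the previous step and equality of the values, $(x^\star,\bQ^\star)$ is a saddle point.
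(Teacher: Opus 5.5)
Your outline is correct. For finiteness it coincides with the paper's proof. The upper bound comes from the affine majorant of $\ell(x,\cdot)$ plus the moment bound implied by $\OT(\bQ,\hat\bP)\le\rho$. The lower bound comes from the finite minimum of the inf-compact function $\EE_{z\sim\bQ_0}[\ell(\cdot,z)]$.

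For the saddle point, the paper does not argue directly. It checks the hypotheses of \cite[Lemmas~3 \& 4]{shafiee2025nash}: inf-compactness in $x$, convexity in $x$ and upper semicontinuity in $z$, tightness of $\cP$ and the growth condition from Assumption~\ref{asp::regularity}\ref{asp::regular::c}, and a Slater condition from real-valued $c$ and $\rho>0$. It then obtains the minimax identity and attainment on both sides at once.

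You split this into two pieces. A Ky Fan--type minimax theorem with compactness only on the $x$-side gives equality of values and attainment in $x$, with no topology on $\cP$. A separate argument gives attainment in $\bQ$ via Prokhorov: tightness from the moment bound, and weak closedness from lower semicontinuity of $c$. It also uses weak upper semicontinuity of $\varphi$, which you correctly derive from uniform integrability of the positive part of $\ell(x,\cdot)$ over $\cP$ under the growth condition. Your route is self-contained modulo a classical minimax theorem, needs no Slater condition, and shows exactly where each part of Assumption~\ref{asp::regularity}\ref{asp::regular::c} is used. The paper's route is shorter but delegates all of this to the cited lemmas.

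The step you flag as the main obstacle, lower semicontinuity of $x\mapsto F(x,\bQ)$, is in fact immediate; you need neither Fatou nor integrable subgradients.
\begin{itemize}
\item Your affine majorant gives $F(x,\bQ)<+\infty$ for every $x\in\RR^n$, not only for $x\in\cX$, and $F(\cdot,\bQ)$ is convex with values in $[-\infty,+\infty)$.
\item Such a function is either identically $-\infty$ or real-valued on $\RR^n$, hence continuous. Indeed, if $F(x_1,\bQ)=-\infty$, then $F(y,\bQ)\le\tfrac{1}{2}F(x_1,\bQ)+\tfrac{1}{2}F(2y-x_1,\bQ)=-\infty$ for all $y$.
\item The same dichotomy handles the extended values cleanly. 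The set of $\bQ\in\cP$ with $F(\cdot,\bQ)$ finite is convex because $F$ is affine in $\bQ$, and it contains $\bQ_0$. Discarding the remaining $\bQ$ changes neither $\sup_{\bQ}F(x,\bQ)$ nor $\sup_{\bQ}\inf_{x}F(x,\bQ)$, so a real-valued minimax theorem applies on that set.
\item The attainment step should still be run on the full, weakly compact $\cP$.
\end{itemize}

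Finally, your side remark that a concave $\ell_1(x,\cdot)$ can decrease only linearly is false; consider $-\|z\|_2^2$. With $p=1$ and pieces strongly concave in $z$, such as the quadratic losses of Section~\ref{sec::numerics}, every $\bQ\in\cP$ with infinite second moment gives $F(x,\bQ)=-\infty$. So this is not a degenerate case, but your decision to work in $[-\infty,\infty)$ already covers it.
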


The proof follows by verifying the conditions of~\cite[Lemmas~3 \& 4]{shafiee2025nash}, which can be checked straightforwardly. For completeness, we provide a self-contained proof in Appendix~\ref{proof::lem::existence_saddle}.

Beyond well-posedness, we exploit Assumption~\ref{asp::regularity} to design efficient algorithms for the inner worst-case expectation problem in the primal DRO formulation. We note that this assumption places us in the same structural regime that yields finite-dimensional reformulations for DRO problems.


\section{Worst-case Expectation Problem}
\label{sec::worst_case}

In this section, we develop an efficient algorithm for solving the inner maximization problem in the primal DRO formulation~\eqref{eq:dro} over the OT ambiguity set~\eqref{eq::ambiguity_set}. We fix an arbitrary decision $x \in \cX$ and suppress its dependence in the notation, writing $\ell(z)$ in place of $\ell(x, z)$. The resulting worst-case expectation problem is therefore
\begin{align}
\label{eq::worst_case}
    \max_{\bQ \in \cP} ~
    \mathbb{E}_{z \sim \bQ} \left[\ell(z) \right]
\end{align}
Under Assumption~\ref{asp::regularity}, problem~\eqref{eq::worst_case} admits a finite convex reformulation based on perspective functions. This reformulation was first derived for the special case $c(z, \hat z) = \| z - \hat z \|$ in \cite[Theorem~4.4]{mohajerin2018data} and later extended to general convex transportation cost functions in \cite[Proposition~20]{zhen2023unified}. While this perspective reformulation is an important step toward tractability, it relies on generic off-the-shelf solvers that do not exploit the problem's structure.

To uncover and leverage this structure, we instead work with the following nonconvex reformulation. Such reformulations serve as a natural intermediate step in deriving perspective reformulations via the primal-worst dual-best principle \cite{beck2009duality,zhen2023unified}, and as we show, they also reveal the geometry needed to solve \eqref{eq::worst_case} more efficiently.

\begin{proposition}
\label{prop::nonconvex-equivalence}
    Under Assumption~\ref{asp::regularity}, the worst-case expectation problem~\eqref{eq::worst_case} is equivalent to the nonconvex program
    \begin{equation}
    \label{eq::equivalence_primal}
    \max
    \left\{
    \frac{1}{N} \sum_{i=1}^N \sum_{k=1}^K \alpha_{ik} \ell_k \left(\hat{z}_i + 
    v_ {ik} \right) :
    \begin{array}{l}
        \alpha_{ik} \in \bR_+, \ v_{ik} \in \bR^m, \ \hat{z}_i + v_{ik} \in \cZ, \ \forall i \in [N], \forall k \in [K] \\[1ex]
        \displaystyle \frac{1}{N} \sum_{i=1}^N \sum_{k=1}^K \alpha_{ik} c(\hat{z}_i + v_{ik}, \hat{z}_i) \leq \rho, \ \sum_{k=1}^K \alpha_{ik} = 1, \, \forall i\in [N]
    \end{array}
    \right\}.
    \end{equation}
    Given an optimal solution
    $\{\alpha_{ik}^\star, v_{ik}^\star\}$ to~\eqref{eq::equivalence_primal}, the discrete distribution
\begin{align}\label{eq::worst-case-Q}
    \QQ^\star := \frac{1}{N} \sum_{i=1}^N \sum_{k=1}^K \alpha_{ik}^\star \delta_{\hat{z}_i + v_{ik}^\star},
\end{align}
    belongs to the ambiguity set $\cP$ and attains the maximum in \eqref{eq::worst_case}.
    Furthermore, there exists an optimal solution in which at most $N+1$ of the weights $\{\alpha_{ik}^\star\}$ are nonzero. Consequently, there exists a worst-case distribution $\bQ^\star$ supported on at most $N + 1$ points.
\end{proposition}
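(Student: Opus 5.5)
Our plan is to prove the two inequalities between the optimal values of \eqref{eq::worst_case} and \eqref{eq::equivalence_primal} separately, and then sparsify.

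\textbf{Feasible points give feasible distributions.} Let $\{\alpha_{ik}, v_{ik}\}$ be feasible for \eqref{eq::equivalence_primal}, and let $\bQ$ be the distribution defined as in \eqref{eq::worst-case-Q}. Consider the coupling $\gamma = \frac1N\sum_{i,k}\alpha_{ik}\delta_{(\hat z_i+v_{ik},\hat z_i)}$. Because $\sum_k\alpha_{ik}=1$, its second marginal is $\hat\bP$. Its transport cost is at most $\rho$, so $\bQ\in\cP$. Moreover,
\[
\EE_{\bQ}[\ell]\ge \frac1N\sum_{i,k}\alpha_{ik}\ell_k(\hat z_i+v_{ik}),
\]
since $\ell\ge \ell_k$. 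Hence the value of \eqref{eq::worst_case} is at least the value of \eqref{eq::equivalence_primal}.

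\textbf{Reverse inequality.} Take any $\bQ\in\cP$ with $\EE_{\bQ}[\ell]>-\infty$ and an optimal coupling $\gamma$. Disintegrate $\gamma$ with respect to the second marginal, writing $\gamma=\frac1N\sum_i \bQ_i\otimes\delta_{\hat z_i}$. Partition $\cZ$ into measurable sets $A_k=\{z:\ell(z)=\ell_k(z),\ \ell(z)>\ell_j(z)\ \forall j<k\}$. Set
\[
\alpha_{ik}=\bQ_i(A_k),\qquad \hat z_i+v_{ik}=\EE_{\bQ_i}[z\mid A_k]
\]
whenever $\alpha_{ik}>0$; otherwise take $v_{ik}=0$. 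By convexity of $\cZ$, each point $\hat z_i+v_{ik}$ lies in $\cZ$. By Jensen, concavity of $\ell_k$ gives
\[
\alpha_{ik}\ell_k(\hat z_i+v_{ik})\ge \int_{A_k}\ell_k\,d\bQ_i=\int_{A_k}\ell\,d\bQ_i,
\]
and convexity of $c(\cdot,\hat z_i)$ gives $\alpha_{ik}c(\hat z_i+v_{ik},\hat z_i)\le \int_{A_k}c(z,\hat z_i)\,d\bQ_i$. So the constructed point is feasible for \eqref{eq::equivalence_primal} and has objective at least $\EE_{\bQ}[\ell]$. The integrability required to apply Jensen (finite first moments) comes from Assumption~\ref{asp::regularity}\ref{asp::regular::c}, since $\int\|z-\hat z_i\|^p\,d\bQ_i<\infty$. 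Hence the two optimal values coincide.

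\textbf{Attainment and the $\bQ^\star$ claim.} Attainment of \eqref{eq::worst_case} follows from Lemma~\ref{lem::existence_saddle} (a saddle point exists, so the inner max is attained at the saddle $x$). A more careful version, which we need for arbitrary fixed $x$, uses tightness from the growth condition in Assumption~\ref{asp::regularity}\ref{asp::regular::c} together with upper semicontinuity of $\bQ\mapsto\EE_{\bQ}[\ell]$ on $\cP$. Mapping an optimal $\bQ$ through the reverse construction yields an optimal solution of \eqref{eq::equivalence_primal}. Conversely, any optimal solution of \eqref{eq::equivalence_primal} induces, by the first step, a distribution $\bQ^\star\in\cP$ with $\EE_{\bQ^\star}[\ell]$ at least the common optimal value, so $\bQ^\star$ is optimal.

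\textbf{Sparsification.} Fix an optimal solution and freeze the $v_{ik}^\star$. The remaining problem in $\alpha$ is a linear program: maximize $\frac1N\sum_{i,k}\alpha_{ik}\ell_k(\hat z_i+v^\star_{ik})$ subject to the $N$ equalities $\sum_k\alpha_{ik}=1$, one budget inequality, and $\alpha\ge0$. The optimal $\alpha^\star$ is optimal for this LP. The LP's feasible set is a polytope, so it is bounded, and the optimum is attained at a vertex. A vertex has at most $N+1$ nonzero coordinates, because there are $N+1$ constraints other than nonnegativity. Replacing $\alpha^\star$ by that vertex preserves optimality and feasibility, giving at most $N+1$ nonzero weights and hence a worst-case $\bQ^\star$ supported on at most $N+1$ points.

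I expect the main obstacle to be the reverse inequality: specifically, the measurability of the disintegration and partition, and the justification of Jensen when $\ell_k$ is unbounded below. If needed, I would instead cite the perspective reformulation of \cite[Proposition~20]{zhen2023unified} to identify the common value directly.
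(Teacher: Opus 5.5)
Your proof is correct, and its core is the paper's. You disintegrate an admissible coupling into $\bQ_1,\dots,\bQ_N$, split each $\bQ_i$ according to the active piece $\ell_k$, and collapse each piece to its conditional mean by Jensen (concavity of $\ell_k$, convexity of $c(\cdot,\hat z_i)$). You then freeze $v^\star$ and take a vertex of the resulting LP, which has $N+1$ non-sign constraints. You are more explicit than the paper in two places. First, the easy direction: a feasible point of \eqref{eq::equivalence_primal} yields, via an explicit coupling and $\ell\ge\ell_k$, an element of $\cP$ whose expected loss dominates the objective. Second, measurability of the partition. The genuine difference lies in how you obtain attainment.

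The paper argues attainment directly in the finite-dimensional program. It asserts that coercivity keeps the perturbations bounded, so that $(\alpha,v)$ effectively ranges over a compact set. That keeps everything finite-dimensional, but as written it skips a step. An atom whose weight $\alpha_{ik}\to0$ may have $\|v_{ik}\|\to\infty$ at bounded cost $\alpha_{ik}\|v_{ik}\|^p$. One must therefore check that such atoms contribute nothing positive in the limit. For $p>1$, with $G$ a supergradient bound, $\alpha_{ik}\bigl(\ell_k(\hat z_i+v_{ik})-\ell_k(\hat z_i)\bigr)\le G\,\alpha_{ik}^{1-1/p}\bigl(\alpha_{ik}\|v_{ik}\|^p\bigr)^{1/p}\to0$; for $p=1$ the same works via the $r$-growth condition.

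You instead get attainment in distribution space, in two steps. First, $\cP$ is weakly compact: lower semicontinuity of $\OT(\cdot,\hat{\bP})$, tightness from the uniform $p$-th moment bound, and Prokhorov. Second, $\bQ\mapsto\EE_{\bQ}[\ell]$ is upper semicontinuous on $\cP$, because the positive part of $\ell$ is uniformly integrable there. You then push an optimal $\bQ$ through your reverse construction to produce an optimizer of \eqref{eq::equivalence_primal}. This sidesteps the degenerate-atom issue entirely. The price is a set of standard weak-topology facts that you only sketch, though they do hold under Assumption~\ref{asp::regularity}\ref{asp::regular::c}. Your opening appeal to Lemma~\ref{lem::existence_saddle} only yields attainment at a saddle-point decision, not at an arbitrary fixed $x$, so you are right not to rely on it.
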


\begin{proof}
Consider any coupling $\gamma \in \Gamma(\bQ, \hat{\bP})$. Since the empirical distribution is $\hat{\bP} = \frac{1}{N} \sum_{i=1}^N \delta_{\hat{z}_i}$, we can disintegrate $\gamma$ into conditional probability distributions $\bQ_i \in \cP(\cZ)$ associated with each sample~$\hat{z}_i$, such that $\bQ = \frac{1}{N} \sum_{i=1}^N \bQ_i$. The worst-case expectation problem \eqref{eq::worst_case} can then be written as maximizing $\frac{1}{N} \sum_{i=1}^N \mathbb{E}_{z \sim \bQ_i}[\ell(z)]$ subject to the budget constraint $\frac{1}{N} \sum_{i=1}^N \mathbb{E}_{z \sim \bQ_i}[c(z, \hat{z}_i)] \leq \rho$.

By Assumption~\ref{asp::regularity}\ref{asp::regular::ell}, the loss is $\ell(z) = \max_{k \in [K]} \ell_k(z)$. For each $i \in [N]$, we can partition the domain $\cZ$ into $K$ disjoint regions $\mathcal{R}_1, \dots, \mathcal{R}_K$ such that $\ell(z) = \ell_k(z)$ for $z \in \mathcal{R}_k$. Let $\alpha_{ik} = \bQ_i(\mathcal{R}_k)$ be the probability mass of region $k$, and let $\bQ_{ik}$ be the conditional distribution of $\bQ_i$ restricted to~$\mathcal{R}_k$ when $\alpha_{ik} > 0$ and be any arbitrary distribution in $\cP(\cZ)$ when $\alpha_{ik} = 0$. By the law of total expectation, the objective and cost contributions for sample $i$ are given by:
\begin{align*}
    \textstyle \mathbb{E}_{z \sim \bQ_i}[\ell(z)] = \sum_{k=1}^K \alpha_{ik} \mathbb{E}_{z \sim \bQ_{ik}}[\ell_k(z)], \qquad
    \mathbb{E}_{z \sim \bQ_i}[c(z, \hat{z}_i)] = \sum_{k=1}^K \alpha_{ik} \mathbb{E}_{z \sim \bQ_{ik}}[c(z, \hat{z}_i)].
\end{align*}
Define the conditional mean $\tilde{z}_{ik} = \mathbb{E}_{z \sim \bQ_{ik}}[z]$. Because $\ell_k$ is concave and $c(\cdot, \hat{z}_i)$ is convex (Assumption~\ref{asp::regularity}\ref{asp::regular::ell} and Definition~\ref{def::cost_OT}), Jensen's inequality implies:
\begin{align*}
    \mathbb{E}_{z \sim \bQ_{ik}}[\ell_k(z)] \leq \ell_k(\tilde{z}_{ik}), \qquad
    \mathbb{E}_{z \sim \bQ_{ik}}[c(z, \hat{z}_i)] \geq c(\tilde{z}_{ik}, \hat{z}_i).
\end{align*}
Therefore, replacing each arbitrary conditional distribution $\bQ_{ik}$ with a Dirac measure $\delta_{\tilde{z}_{ik}}$ placed at its mean can only increase the objective and decrease the transportation cost. Thus, restricting $\bQ_{ik}$ to $\delta_{\tilde{z}_{ik}}$ and optimizing over $\tilde{z}_{ik}$ do not change the problem's optimal value. Applying the change of variables $v_{ik} = \tilde{z}_{ik} - \hat{z}_i$ safely restricts the search space to finite-dimensional perturbations, directly yielding \eqref{eq::equivalence_primal} and the worst-case distribution \eqref{eq::worst-case-Q}.

To establish solvability and the support bound, note that under Assumption~\ref{asp::regularity}\,\ref{asp::regular::c}, the transportation cost strictly dominates the growth of the loss function. This coercivity ensures that no sequence of valid perturbations $v_{ik}$ can indefinitely increase the objective without violating the bounded transport budget $\rho$. Since the probability weights $\alpha_{ik}$ reside in a compact simplex and the effective perturbations $v_{ik}$ are bounded, the joint maximization over $(\alpha, v)$ in \eqref{eq::equivalence_primal} is attainable with a finite optimum. 
Recall also that joint maximization is equivalent to sequential maximization. As the problem is solvable over $v$, there exists an optimal $v^\star$. Evaluating the inner maximization at $v = v^\star$, the problem over $\alpha$ reduces to the following linear program (LP)
\begin{align*}
    \max_{\alpha \geq 0} \left\{ \frac{1}{N} \sum_{i=1}^N \sum_{k=1}^K \alpha_{ik} \ell_k(\hat{z}_i + v_{ik}^\star) : \sum_{k=1}^K \alpha_{ik} = 1 \; \forall i \in [N], \ \frac{1}{N} \sum_{i=1}^N \sum_{k=1}^K \alpha_{ik} c(\hat{z}_i + v_{ik}^\star, \hat{z}_i) \leq \rho \right\},
\end{align*}
which has $NK$ nonnegative variables and exactly $N+1$ constraints ($N$ normalization constraints and $1$ total transport budget constraint). By the fundamental theorem of linear programming~\cite[Theorem~2.7]{bertsimas1997introduction}, the optimal value is attained at a basic feasible solution (BFS) $\alpha^\star$ possessing at most $N+1$ strictly positive variables. Thus, the resulting worst-case distribution $\bQ^\star$ is supported on at most $N+1$ points.
\end{proof}

The above proposition shows that the worst-case distribution $\QQ^\star$ is composed of discrete atoms, where each atom has mass $\alpha_{ik}^\star$ and is located at the shifted empirical sample $\hat{z}_i + v_{ik}^\star$. Moreover, the transportation budget constraint in~\eqref{eq::equivalence_primal} is the only constraint coupling the decisions associated with different empirical samples.
This observation naturally suggests a two-level decomposition of the problem. At the upper level, we allocate a local budget $b_i$ to the transportation cost associated with each empirical sample $\hat z_i$. At the lower level, for a fixed value of $b_i$, we compute the best \emph{utility value} $V_i(b_i)$ that can be attained from $\hat z_i$ using only its assigned local budget.

The next theorem formalizes this decomposition and shows that the resulting upper-level problem reduces to a classical budget allocation problem over $N$ scalar variables.

\begin{theorem}
\label{thm::decomposition}
    Under Assumption~\ref{asp::regularity}, the worst-case expectation problem \eqref{eq::worst_case} is equivalent to
    \begin{equation}
    \label{eq::master}
        \max \left\{ \frac{1}{N} \sum_{i=1}^{N} V_i(b_i) : b \in \bR_+^N, \ \frac{1}{N} \sum_{i=1}^N b_i \leq \rho \right\}.
    \end{equation}
    Here, for every $i\in [N]$ and $b_i \geq 0$, the local utility $V_i(b_i)$ is defined by
    \begin{align}
    \label{eq::V_i}
        V_i(b_i) = \max_{1\leq k_1< k_2\leq K} V_i^{(k_1, k_2)}(b_i),
    \end{align}
    where, for $1 \leq k_1 < k_2 \leq K$, the pairwise utility function $V_i^{(k_1, k_2)}(b_i)$ is defined as
    \begin{align}
    \label{eq::U_i^k}
        V_i^{(k_1, k_2)}(b_i) 
        = \max
        \left\{
        \alpha_{1} \ell_{k_1} \left(\hat{z}_i + v_1\right) + \alpha_{2} \ell_{k_2} \left(\hat{z}_i + v_2\right)
        :
        \begin{array}{l}
            \alpha_{j}, \beta_j \in \bR_+, v_{j} \in \bR^m, \hat{z}_i + v_j \in \cZ, \\[2ex]
            \displaystyle \alpha_j c\left(\hat{z}_i + v_j, \hat{z}_i\right) \leq \beta_j, ~ \forall j \in [2] \\[2ex]
            \alpha_{1} + \alpha_{2} = 1,\ \beta_{1} + \beta_{2} = b_i
        \end{array}
        \right\}. 
    \end{align}
\end{theorem}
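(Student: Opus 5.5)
Starting from Proposition~\ref{prop::nonconvex-equivalence}, I would show that the optimal value of \eqref{eq::equivalence_primal} equals that of \eqref{eq::master}, proving two inequalities. The key device is to split the global budget by sample. For a feasible $\{\alpha_{ik},v_{ik}\}$, set $\beta_{ik}:=\alpha_{ik}c(\hat z_i+v_{ik},\hat z_i)$ and $b_i:=\sum_k\beta_{ik}$. Then $\frac1N\sum_i b_i\le\rho$. Define the full local utility
\[
W_i(b_i):=\max\Big\{\textstyle\sum_k\alpha_k\ell_k(\hat z_i+v_k):\ \alpha\in\Delta_K,\ \hat z_i+v_k\in\cZ,\ \textstyle\sum_k\alpha_k c(\hat z_i+v_k,\hat z_i)\le b_i\Big\}.
\]
This is $K$-piece rather than pairwise. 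Because the only coupling constraint in \eqref{eq::equivalence_primal} is the budget constraint, a routine decomposition argument shows that \eqref{eq::equivalence_primal} equals $\max\{\frac1N\sum_iW_i(b_i): b\ge0,\ \frac1N\sum_ib_i\le\rho\}$. In one direction, any feasible point induces the $b_i$ above, with sample-$i$ objective at most $W_i(b_i)$. In the other, concatenating local feasible points for any admissible $b$ gives a feasible point of \eqref{eq::equivalence_primal}. Attainment of each $W_i$ follows from the same coercivity argument as in Proposition~\ref{prop::nonconvex-equivalence}.

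It then remains to show $W_i(b_i)=V_i(b_i)$. The inequality $V_i\le W_i$ holds because a pairwise-feasible point is $W_i$-feasible: put zero weight on the other pieces, and note that $\alpha_jc\le\beta_j$ summed gives total cost at most $b_i$. For $W_i\le V_i$, fix $v_k$ at an optimum of $W_i$. The problem in $\alpha$ is then an LP over $\alpha\ge0$ with the two constraints $\sum_k\alpha_k=1$ and $\sum_k\alpha_kc_k\le b_i$. A basic optimal solution therefore has at most two nonzero entries, indexed by some pair $k_1<k_2$ (if only one is nonzero, pair it with any other index at weight zero, using $K\ge2$). Setting $\beta_j:=\alpha_jc(\hat z_i+v_j,\hat z_i)$ plus any slack assigned to $\beta_1$ makes $\beta_1+\beta_2=b_i$ hold with equality. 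The point is then feasible for $V_i^{(k_1,k_2)}(b_i)$, giving $W_i(b_i)\le V_i(b_i)$.

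I expect the main obstacle to be the bookkeeping between the equality $\beta_1+\beta_2=b_i$ and the inequality budget, together with the degenerate case $\alpha_j=0$. In that case the constraint $\alpha_jc\le\beta_j$ is vacuous and $v_j$ is unconstrained, but it carries zero weight in the objective (the perspective convention $0\cdot\ell=0$ applies since $\ell_k$ is real-valued). Absorbing slack into a $\beta_j$ is therefore always allowed. I would also check that the maximum defining $V_i^{(k_1,k_2)}$ is attained, or else read it as a supremum, which the sandwich argument still handles. Finally, the worst-case distribution is recovered through \eqref{eq::worst-case-Q}.
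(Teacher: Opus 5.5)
Your proposal is correct and follows essentially the same route as the paper. Both arguments introduce per-sample budgets $b_i$ to reduce to the full $K$-piece local problem, then fix the perturbations at an optimum and apply the two-constraint LP / basic-feasible-solution argument to restrict to pairs $k_1<k_2$, and your explicit treatment of the slack in $\beta_1+\beta_2=b_i$, the single-active-piece case, and zero-weight components fills in bookkeeping the paper leaves implicit.
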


\begin{proof}
By Proposition~\ref{prop::nonconvex-equivalence}, the worst-case expectation problem \eqref{eq::worst_case} is equivalent to the finite-dimensional nonconvex program \eqref{eq::equivalence_primal}. The objective function and the constraints of this primal program are completely separable across the empirical samples $i \in [N]$, coupled only by the total transport budget constraint. To decouple this problem, we introduce local budget variables $b_i \geq 0$ representing the transportation cost allocated to each empirical sample $\hat{z}_i$. We define this allocation as:
\begin{align*}
    \textstyle b_i = \sum_{k=1}^K \alpha_{ik} c(\hat{z}_i + v_{ik}, \hat{z}_i), \quad \forall i \in [N].
\end{align*}
The total budget constraint becomes the average of these local allocations: $\frac{1}{N} \sum_{i=1}^N b_i \leq \rho$. We can then decompose \eqref{eq::equivalence_primal} into two levels. At the upper level, we optimize the budget allocation $\{b_i\}_{i=1}^N$ by solving~\eqref{eq::master}, where $V_i(b_i)$ denotes the optimal value of the lower-level problem associated with empirical sample $\hat z_i$ under a fixed local budget $b_i$:
\begin{align}
\label{eq::full_V_i}
V_i(b_i) = \max
\left\{ \sum_{k=1}^K \alpha_{k} \ell_k(\hat{z}_i + v_{k}) :
\begin{array}{l}
\alpha_{k} \in \bR_+, \ v_{k} \in \bR^m, \ \hat{z}_i + v_{k} \in \cZ, \ \forall k \in [K] \\[1ex]
\displaystyle \sum_{k=1}^K \alpha_{k} = 1, \ \sum_{k=1}^K \alpha_{k} c(\hat{z}_i + v_{k}, \hat{z}_i) \leq b_i
\end{array}
\right\}.
\end{align}
We next show that the lower-level problem can be further decomposed into $O(K^2)$ subproblems, each involving only two components. Fixing the perturbations at their optimal values $\{v_k^\star\}_{k=1}^K$ reduces problem~\eqref{eq::full_V_i} to an LP over the probability weights $\{\alpha_k\}_{k=1}^K$. This LP has $K$ nonnegative variables and exactly two constraints: the normalization constraint $\sum_{k=1}^K \alpha_k = 1$ and the local budget constraint $\sum_{k=1}^K \alpha_k c(\hat z_i + v_k^\star,\hat z_i)\leq b_i$.
Therefore, by the same argument used in the proof of Proposition~\ref{prop::nonconvex-equivalence}, there exists an optimal BFS with at most two strictly positive weights $\alpha_k$. Consequently, for any fixed sample $i$, at most two components of the loss function receive positive mass in an optimal solution.
This structural property allows us to evaluate $V_i(b_i)$ as follows: (1) enumerate all pairs of components $1\leq k_1 < k_2 \leq K$; (2) for each pair, compute the corresponding pairwise utility function $V_i^{(k_1,k_2)}(b_i)$ by solving~\eqref{eq::U_i^k}; and (3) select the pair attaining the largest objective value.
\end{proof}

Theorem~\ref{thm::decomposition} should be interpreted as a value-function reformulation of the worst-case expectation problem. It replaces the original distributional optimization problem with the budget allocation problem~\eqref{eq::master}, whose objective is expressed in terms of the local utility functions $V_i$. The theorem does not, by itself, prescribe how these utility functions should be evaluated, how the upper-level problem \eqref{eq::master} should be solved, or how one should extract a worst-case distribution from its optimal solution. These constructive questions are addressed in the remainder of this section.

We proceed in two steps. In Section~\ref{subsec::efficient_budget_allocation}, we present an efficient algorithm for solving the budget allocation problem~\eqref{eq::master}. Given an optimal budget allocation, the same local optimizers used to evaluate the utilities can also be used to construct a worst-case distribution. This direct construction is simple but not necessarily sparsest. Specifically, according to \eqref{eq::V_i} and \eqref{eq::U_i^k}, each local optimizer may use two active components, contributing two atoms to the worst-case distribution. Therefore, the final constructed worst-case distribution can be supported on as many as $2N$ points.
On the other hand, Proposition~\ref{prop::nonconvex-equivalence} guarantees the existence of an optimal worst-case distribution supported on at most $N+1$ points. Reducing the support size from the direct $2N$-point construction to an $(N+1)$-point construction requires an additional argument, which is developed in Section~\ref{subsec::support_compression}.

\subsection{Efficient Budget Allocation Algorithm and 2N-Point Construction}
\label{subsec::efficient_budget_allocation}

We first explain how the upper-level problem~\eqref{eq::master} can be solved efficiently. 
For a multiplier $\lambda \geq 0$, the corresponding Lagrangian of~\eqref{eq::master}, up to the constant term $\lambda \rho$, is given by
\begin{align*}
    L_\lambda(b)
    = \frac{1}{N} \sum_{i=1}^{N} V_i(b_i) - \lambda b_i.
\end{align*}
Thus, for any fixed value of $\lambda$, maximizing the Lagrangian separates across the empirical samples. This separability is the main algorithmic benefit of the budget allocation reformulation.

This Lagrangian viewpoint is justified by Everett's epsilon theorem~\cite[Theorem~3]{everett1963generalized}. In the present notation, the theorem implies that if, for some $\epsilon>0$ and $\lambda\geq 0$, an allocation $\bar b\in\bR_+^N$ satisfies $L_\lambda(\bar b) \geq \max_{b \in \bR_+^N} L_\lambda(b) - \epsilon,$ then $\bar b$ is an $\epsilon$-optimal solution of the constrained problem with budget level equal to its own resource usage. More precisely, letting $\bar\rho = \frac{1}{N}\sum_{i=1}^{N} \bar b_i$, we have
\begin{align}\label{eq::dual-opt}
    \textstyle \frac{1}{N}\sum_{i=1}^{N}V_i(\bar b_i)
    \geq \max \left\{
        \frac{1}{N}\sum_{i=1}^{N}V_i(b_i)
        : b\in\bR_+^N, \ \frac{1}{N} \sum_{i=1}^{N}b_i \leq \bar \rho
    \right\}
    - \epsilon .
\end{align}
Consequently, if one can find a multiplier $\lambda^\star$ whose Lagrangian maximizer uses the target budget, $\frac{1}{N}\sum_{i=1}^{N} b_i^\star=\rho,$ then $b^\star$ solves the upper-level problem~\eqref{eq::master}, up to the accuracy with which the Lagrangian subproblems are solved.

Equivalently, in the nondegenerate case where the budget constraint is active, the KKT conditions~\cite[Theorem~31.3]{rockafellar1970convex} reduce the computation to finding a multiplier $\lambda^\star\geq 0$ such that
\begin{align*}
    \textstyle b_i^\star
    \in
    \argmax_{b_i\geq 0}
    \left\{
        V_i(b_i)-\lambda^\star b_i
    \right\}, \ \forall i \in [N] \quad \text{and} \quad
    \frac{1}{N}\sum_{i=1}^{N} b_i^\star=\rho.
\end{align*}
This leads to a natural top-down interpretation of our algorithm. In the outer loop, we search for the multiplier $\lambda^\star$ by bisection. For each fixed value of $\lambda$, the Lagrangian subproblem separates across $i$, and we compute $b_i(\lambda) \in \argmax_{b_i\geq 0} \{ V_i(b_i)-\lambda b_i \}$ by a one-dimensional golden-section search over the local budget $b_i$. However, each evaluation of the objective $V_i(b_i)-\lambda b_i$ requires the value of $V_i(b_i)$. Computing $V_i(b_i)$ requires maximizing over all pairs $(k_1,k_2)$, and evaluating each pairwise value $V_i^{(k_1, k_2)}(b_i)$ requires a nested golden-section search over the local weight and budget allocations.

The implementation is therefore most naturally presented from the bottom up. We first describe how to evaluate the pairwise utility $V_i^{(k_1, k_2)}(b_i)$. We then use this pairwise evaluator to compute the full local utility $V_i(b_i)$. Finally, these local evaluations are used inside the outer budget allocation algorithm for solving~\eqref{eq::master}.

Our algorithms use the following local worst-case oracle as a primitive.

\begin{assumption}[Local Worst-Case Oracle]
\label{asp::inner_oracle}
    For any $k\in[K]$, nominal point $\hat z\in\cZ$, radius $u \geq 0$, and accuracy level $\epsilon > 0$, there exists an algorithm, denoted by
    \begin{align*}
        (\hat L, \hat v)
        \gets
        \mathsf{WCO}_{k}^{\epsilon}(\hat z,u),
    \end{align*}
    that runs in time $\mathsf{Cost}_{k,\epsilon}$ and returns a feasible perturbation $\hat v\in\bR^m$ satisfying $c(\hat z+\hat v,\hat z)\leq u$ and $\hat z+\hat v\in\cZ,$ together with its attained loss value $\hat L=\ell_k(\hat z+\hat v)$ satisfying
    \begin{align*}
        \hat L 
        \geq \max
        \left\{
            \ell_k(\hat z+v)
            :
            v \in \bR^m, \ 
            \hat z+v\in\cZ, \
            c(\hat z+v,\hat z) \leq u
        \right\}
        - \epsilon.
    \end{align*}
\end{assumption}

Assumption~\ref{asp::inner_oracle} is the computational primitive used by the algorithms below, consistent with oracle-based approaches in robust optimization~\cite{ben2015oracle}. The oracle complexity $\mathsf{Cost}_{k,\epsilon}$ depends on the structure of $\ell_k$: interior-point methods yield $O(m^\varrho \log(1/\epsilon))$ for conic-representable losses with $\varrho>0$ reflecting problem complexity \cite{nesterov1994interior}; projected subgradient methods give $O(m/\epsilon^2)$ for concave and Lipschitz losses~\cite[Theorem~3.2]{bubeck2015convex}; and projected gradient methods achieve $O(m/\epsilon)$ for concave and smooth losses~\cite[Theorem~3.7]{bubeck2015convex}, with faster rates under additional curvature assumptions~\cite[Theorems~3.9--3.10]{bubeck2015convex}. Furthermore, Appendix~\ref{sec::efficient_oracles} discusses several structured \emph{prox-friendly} loss classes that admit even more efficient tailored oracle implementations.

We now describe the utility-evaluation algorithm. The pairwise utility $V_i^{(k_1, k_2)}(b_i)$ involves two scalar allocation decisions: the mass split $\alpha_1+\alpha_2=1$ and the budget split $\beta_1+\beta_2=b_i$. Algorithm~\ref{alg::evaluate_pairwise} performs a golden-section search over the mass split. For each candidate value of $\alpha_1$, Algorithm~\ref{alg::solve_inner} performs a second golden-section search over the budget split. Each objective evaluation inside the inner search is computed using the local worst-case oracle in Assumption~\ref{asp::inner_oracle}. 
The oracle is called with an effective radius $\beta/\alpha$, 
obtained by normalizing the component budget by its mass. Specifically, for a component with mass $\alpha>0$ and budget $\beta\geq 0$, the constraint $\alpha c(\hat z_i+v,\hat z_i)\leq \beta$ is equivalent to $c(\hat z_i+v,\hat z_i)\leq \beta/\alpha$. Thus, Algorithm~\ref{alg::solve_inner} calls $(\hat L, \hat v) \gets \mathsf{WCO}_{k}^{\epsilon} (\hat z_i,{\beta}/{\alpha})$ and uses the weighted value $\alpha \hat L$. This normalization is well defined in Algorithm~\ref{alg::solve_inner}, because Algorithm~\ref{alg::evaluate_pairwise} only passes interior golden-section points $\alpha_1 \in (0,1)$; hence both $\alpha_1$ and $\alpha_2=1-\alpha_1$ are strictly positive. In the following, we use a common golden-section tolerance $\eta$ and oracle precision $\epsilon$ throughout.

\begin{figure*}[!t]
\begin{minipage}[t]{0.495\textwidth}
\begin{algorithm}[H]
    \small
    \caption{\texttt{PairEval}}
    \label{alg::evaluate_pairwise}
    \begin{algorithmic}
        \REQUIRE Sample $i$, pieces $k_1<k_2$, budget $b_i$, 
        \STATE \hspace{2.3em} tolerance $\eta>0$, oracle accuracy $\epsilon>0$ \\[1ex]
        \hspace{-1em}\textbf{Initialize:} $\phi \gets \frac{\sqrt{5}-1}{2}$ \\[0.5ex]
        \hspace{3.8em} $L_\alpha \gets 0$, $U_\alpha\gets 1$ \\[0.5ex]
        \hspace{3.8em} $\alpha^{(1)} \gets U_\alpha-\phi(U_\alpha-L_\alpha)$ \\[0.5ex]
        \hspace{3.8em} $\alpha^{(2)} \gets L_\alpha+\phi(U_\alpha-L_\alpha)$ \\[1ex]

        \WHILE{$U_\alpha-L_\alpha>\eta$} \vspace{1ex}
            \STATE $(V^{(1)},s^{(1)})\gets
            \texttt{InnerSolver}(i,k_1,k_2,b_i,\alpha^{(1)},\eta,\epsilon)$ \\[1ex]
            \STATE $(V^{(2)},s^{(2)})\gets
            \texttt{InnerSolver}(i,k_1,k_2,b_i,\alpha^{(2)},\eta,\epsilon)$ \\[1ex]

            \IF{$V^{(1)}<V^{(2)}$}\vspace{0.5ex}
                \STATE $L_\alpha\gets \alpha^{(1)}$ \\[0.5ex]
                \STATE $\alpha^{(1)}\gets \alpha^{(2)}$ \\[0.5ex]
                \STATE $\alpha^{(2)}\gets L_\alpha+\phi(U_\alpha-L_\alpha)$ \\[0.5ex]
            \ELSE
                \STATE $U_\alpha\gets \alpha^{(2)}$ \\[0.5ex]
                \STATE $\alpha^{(2)}\gets \alpha^{(1)}$ \\[0.5ex]
                \STATE $\alpha^{(1)}\gets U_\alpha-\phi(U_\alpha-L_\alpha)$ \\[0.5ex]
            \ENDIF \vspace{0.5ex}
        \ENDWHILE
        \vspace{1ex}

        $j^\star \in \argmax_{j \in [2]} V^{(j)}$ \\[0.8ex] 
        
        \STATE 
        $\hat V_i^{(k_1, k_2)}(b_i) \gets V^{(j^\star)}, \ \hat s \gets
        s^{(j^\star)}$ \\[1ex]

        \ENSURE $\hat V_i^{(k_1, k_2)}(b_i) \approx V_i^{(k_1, k_2)}(b_i)$ \& optimizer $\hat s$
    \end{algorithmic}
\end{algorithm}
\end{minipage}
\hfill
\begin{minipage}[t]{0.49\textwidth}
\begin{algorithm}[H]
    \small
    \caption{\texttt{InnerSolver}}
    \label{alg::solve_inner}
    \begin{algorithmic}
        \REQUIRE Sample $i$, pieces $k_1,k_2$, budget $b_i$, weight $\alpha_1 \in (0,1)$, tolerance $\eta > 0$, oracle accuracy $\epsilon > 0$ \\[1ex]
        \hspace{-1em}\textbf{Initialize:} $\phi \gets \frac{\sqrt{5}-1}{2}$, $\alpha_2\gets 1-\alpha_1$ \\[0.5ex]
        \hspace{3.8em} $L_\beta\gets 0$, $U_\beta\gets b_i$ \\[0.5ex]
        \hspace{3.8em} $\beta^{(1)}\gets U_\beta-\phi(U_\beta-L_\beta)$ \\[0.5ex]
        \hspace{3.8em} $\beta^{(2)}\gets L_\beta+\phi(U_\beta-L_\beta)$ \\[1ex]

        \WHILE{$U_\beta-L_\beta>\eta$} \vspace{0.5ex}
            \STATE $\big(L_1^{(j)}, v_1^{(j)}\big) \gets
            \mathsf{WCO}_{k_1}^{\epsilon}
            \left(\hat z_i, \frac{\beta^{(j)}}{\alpha_1}\right), \hfill \forall j \in [2]$ \\[0.5ex]
            
            \STATE $\big(L_2^{(j)}, v_2^{(j)}\big) \gets
            \mathsf{WCO}_{k_2}^{\epsilon}
            \left(\hat z_i,\frac{b_i-\beta^{(j)}}{\alpha_2} \right), \hfill \forall j \in [2]$ \\[0.5ex]

            \STATE $V^{(j)}\gets
            \alpha_1 L_1^{(j)}+\alpha_2 L_2^{(j)}, \hfill \forall j \in [2]$ \\[1ex]

            \IF{$V^{(1)} < V^{(2)}$} \vspace{0.5ex}
                \STATE $L_\beta \gets \beta^{(1)}$ \\[0.5ex]
                \STATE $\beta^{(1)}\gets \beta^{(2)}, \ \beta^{(2)}\gets L_\beta+\phi(U_\beta-L_\beta)$ \\[0.5ex]
            \ELSE \vspace{0.5ex}
                \STATE $U_\beta\gets \beta^{(2)}$ \\[0.5ex]
                \STATE $\beta^{(2)}\gets \beta^{(1)}, \ \beta^{(1)}\gets U_\beta-\phi(U_\beta-L_\beta)$ \\[0.5ex]
            \ENDIF \vspace{0.5ex}
        \ENDWHILE
        \vspace{1ex}

        \STATE 
        $j^\star \in \argmax_{j \in [2]} V^{(j)}$ \\[1ex] 
        
        \STATE $\hat V \gets V^{(j^\star)}, \ \hat s \gets
        \big(\alpha_1, \alpha_2, \beta^{(j^\star)}, b - \beta^{(j^\star)}, v_1^{(j^\star)}, v_2^{(j^\star)}\big)$ \\[0.9ex] 

        \ENSURE Value $\hat V$ \& optimizer $\hat s$
    \end{algorithmic}
\end{algorithm}
\end{minipage}
\end{figure*}

\begin{lemma}
\label{lem::eval_approx}
    Fix $i\in[N]$, $b_i \in \RR_+$, and $1\leq k_1<k_2\leq K$. Suppose Assumptions~\ref{asp::regularity} and~\ref{asp::inner_oracle} hold. Then Algorithm~\ref{alg::evaluate_pairwise} runs with oracle precision $\epsilon>0$ and golden-section tolerance $\eta > 0$, and returns a feasible local solution $\hat s$ and a value $\hat V_i^{(k_1, k_2)}(b_i)$ satisfying
    \begin{align*}
        \left| V_i^{(k_1, k_2)}(b_i) - \hat V_i^{(k_1, k_2)}(b_i) \right|
        \leq 
        \begin{cases}
        O\left(\epsilon + \eta^{\min\{1/p,\,1-1/p\}} \right) &\mathrm{if}\ p>1 \\
        O\left(\epsilon + \eta^{1-r} \right) &\mathrm{if}\ p=1
        \end{cases}
    \end{align*}
    with the running time complexity of 
    \begin{align*}
        \widetilde O \left( \left( \mathsf{Cost}_{k_1,\epsilon} + \mathsf{Cost}_{k_2,\epsilon} \right) \log^2 ({1}/{\eta}) \right).
    \end{align*}
\end{lemma}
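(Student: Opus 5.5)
We would reduce the pairwise utility to a nested one-dimensional concave maximization and then control three separate sources of error: oracle inexactness, golden-section termination, and the lack of exact concavity of the computed objective.

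\textbf{Step 1: concave value function.} For the mass split $\alpha_1\in[0,1]$ and budget split $\beta_1\in[0,b_i]$, set
\begin{align*}
F(\alpha_1,\beta_1)=\alpha_1 g_{k_1}(\beta_1/\alpha_1)+(1-\alpha_1)g_{k_2}\bigl((b_i-\beta_1)/(1-\alpha_1)\bigr),
\end{align*}
where $g_k(u)=\max\{\ell_k(\hat z_i+v):\hat z_i+v\in\cZ,\ c(\hat z_i+v,\hat z_i)\le u\}$. Then $V_i^{(k_1,k_2)}(b_i)=\max_{\alpha_1,\beta_1}F$. Concavity of $\ell_k$ and convexity of $c(\cdot,\hat z_i)$ make $g_k$ concave and nondecreasing in $u$. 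Hence $\alpha g_k(\beta/\alpha)$ is a perspective and jointly concave in $(\alpha,\beta)$, so $F$ is jointly concave. Consequently $\beta_1\mapsto F(\alpha_1,\beta_1)$ is concave, and partial maximization preserves concavity, so $h(\alpha_1):=\max_{\beta_1}F(\alpha_1,\beta_1)$ is concave. Both golden-section searches therefore target unimodal functions.

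\textbf{Step 2: regularity of $g_k$.} Under Assumption~\ref{asp::regularity}\ref{asp::regular::c}, the constraint $c\le u$ forces $\|v\|\le u^{1/p}$. Concave $\ell_k$ has an affine majorant, so $g_k(u)\le g_k(0)+G u^{1/p}$ when $p>1$; when $p=1$ the sublinear growth gives $g_k(u)\le g(1+C+u^{r})$. We would use these bounds for two purposes:
\begin{itemize}
\item \emph{Boundary values of the perspective.} The term $\alpha g_k(\beta/\alpha)$ behaves like $\alpha^{1-1/p}\beta^{1/p}$, or like $\alpha^{1-r}\beta^r$ when $p=1$. This gives Hölder-type continuity of $F$ with exponent $\min\{1/p,1-1/p\}$ in $\alpha$ and in $\beta$ (exponent $1-r$, or $r$, when $p=1$), uniformly on bounded budgets.
\item \emph{Cost of a small interval.} Perturbing the maximizer $(\alpha^\star,\beta^\star)$ by $\eta$ therefore changes $F$ by $O(\eta^{\min\{1/p,1-1/p\}})$, respectively $O(\eta^{1-r})$.
\end{itemize}
\textbf{This modulus estimate is the main obstacle.} It needs care near $\alpha\to0$ and $\beta\to 0$, where the perspective degenerates and plain Lipschitz continuity fails. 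We would first try to bound $g_k(u_1)-g_k(u_2)$ by $G|u_1^{1/p}-u_2^{1/p}|$ via the affine majorant, and then handle the perspective by splitting into the cases $\alpha\gtrless\eta^{1/2}$.

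\textbf{Step 3: inexact golden section.} The oracle returns $\hat L$ with $0\le g_k-\hat L\le\epsilon$, so the inner search sees $\tilde F$ with $|\tilde F-F|\le\epsilon$. $\tilde F$ need not be unimodal, so we would invoke the standard robustness of golden section for $\epsilon$-perturbed concave functions. Each comparison discards the interval containing the true maximizer only if the two probe values differ by at most $2\epsilon$, and concavity then bounds the loss from that step. The resulting guarantee is that the final returned point has $F$ within $O(\epsilon)$ of the maximum over the last bracket. Combined with Step 2, the inner search yields $|\hat h(\alpha_1)-h(\alpha_1)|\le O(\epsilon+\eta^{\theta})$, with $\theta$ as in the statement.

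The outer search then runs on the function $\hat h$, which is concave $h$ perturbed by $O(\epsilon+\eta^\theta)$. The same argument, together with the Hölder continuity of $h$ in $\alpha_1$, gives the claimed bound. Feasibility of $\hat s$ is immediate: $\hat v_j$ satisfies $c\le\beta_j/\alpha_j$ by Assumption~\ref{asp::inner_oracle}, the splits sum to $1$ and to $b_i$, and $\hat V$ is exactly the objective at $\hat s$. Hence $\hat V\le V_i^{(k_1,k_2)}(b_i)$, and only the one-sided error needs controlling.

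\textbf{Step 4: complexity.} Each golden-section loop shrinks its bracket by the factor $\phi$ per iteration, so the loops take $O(\log(1/\eta))$ and $O(\log(b_i/\eta))$ iterations respectively. Each inner iteration makes $O(1)$ oracle calls to each of $\mathsf{WCO}_{k_1}^\epsilon$ and $\mathsf{WCO}_{k_2}^\epsilon$. Multiplying gives $\widetilde O\bigl((\mathsf{Cost}_{k_1,\epsilon}+\mathsf{Cost}_{k_2,\epsilon})\log^2(1/\eta)\bigr)$, with the $\log b_i$ factor absorbed into $\widetilde O$.
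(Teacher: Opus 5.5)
Your Steps~1, 3 and~4 match the paper's proof: the perspective decomposition with nested concavity, the argument that a wrong discard only happens when the probes differ by at most $2\epsilon$, and the nested iteration count. Your observation that $\hat V\le V_i^{(k_1,k_2)}(b_i)$, because $\hat s$ is feasible, is also correct. The gap is Step~2. You flag it as the main obstacle but do not resolve it, and the tactics you sketch would not give the stated rates.
\begin{enumerate}
\item[(a)] The bound $g_k(u_1)-g_k(u_2)\le G\,|u_1^{1/p}-u_2^{1/p}|$ is false for costs that only satisfy $c\ge\|\cdot\|^p$. Take $m=1$, $\hat z=0$, $\ell_k(z)=z$ and $c(z,\hat z)=\max\{|z-\hat z|,|z-\hat z|^p\}$. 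Then $g_k(u)=u$ on $[0,1]$ and $G=1$, so near $u=1$ the increments of $g_k$ exceed those of $u^{1/p}$ by a factor close to $p$.
\item[(b)] Splitting at $\alpha\gtrless\eta^{1/2}$ and bounding the small-mass side by $G\alpha^{1-1/p}\beta^{1/p}$ only yields $O(\eta^{(1-1/p)/2})$ there, which is half the required exponent.
\item[(c)] For $p=1$, your heuristic $\alpha^{1-r}\beta^r$ gives the $\beta$-direction the exponent $r$. The inner search would then contribute $O(\eta^{r})$. This is weaker than the claimed $O(\eta^{1-r})$ when $r<1/2$, and vacuous when $r=0$.
\end{enumerate}
So the claim ``$\theta$ as in the statement'' in Step~3 does not follow from your Step~2.

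The missing tool is subadditivity of concave functions on $\RR_+$: $h(x)-h(y)\le h(x-y)-h(0)$ for $0\le y\le x$. Combine it with the supergradient bound $g_k(u)-g_k(0)\le G u^{1/p}$, which holds for every $p\ge 1$, including $p=1$. This gives $|g_k(u)-g_k(u')|\le G|u-u'|^{1/p}$, and hence $|F(\alpha,\beta)-F(\alpha,\beta')|\le 2G|\beta_1-\beta_1'|^{1/p}$ uniformly in $\alpha$. The rescaling only produces the harmless factor $\alpha_j^{1-1/p}\le 1$, and in particular $F$ is Lipschitz in $\beta$ when $p=1$.

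For the mass split, write $F=\sum_j\alpha_j g_{k_j}(0)+\sum_j\psi_j(\alpha_j)$, where $\psi_j(a)=a\,H_j(\beta_j/a)$ and $H_j=g_{k_j}-g_{k_j}(0)$. Each $\psi_j$ is concave, nonnegative, nondecreasing and vanishes at $0$. The same inequality then gives $|\psi_j(\alpha_j)-\psi_j(\alpha_j')|\le \Delta\, H_j(\beta_j/\Delta)$ with $\Delta=|\alpha_1-\alpha_1'|$. This is at most $G\, b_i^{1/p}\Delta^{1-1/p}$ for $p>1$, and at most $C\Delta+g\, b_i^{r}\Delta^{1-r}$ for $p=1$; the latter is the only place the sublinear growth condition enters. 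No boundary case split is needed. Transferring these moduli to $\max_\beta F$, by comparing at the two maximizers, yields exactly the rates $\eta^{\min\{1/p,1-1/p\}}$ and $\eta^{1-r}$.
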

The proof of Lemma~\ref{lem::eval_approx} is presented in Appendix~\ref{proof::lemma2and3}. Lemma~\ref{lem::eval_approx} separates the two numerical error sources. The precision $\epsilon$ controls the accuracy of each call to the local worst-case oracle, while the tolerance $\eta$ controls the accuracy of the two golden-section searches. Thus, to obtain a $\delta$-accurate evaluation of $V_i^{(k_1, k_2)}(b_i)$, it suffices to choose $\epsilon$ and~$\eta$ so that $\epsilon+\eta = O(\delta)$. For example, one may take $\epsilon=O(\delta)$ with $\eta=O(\delta^{\max\{p,\, p/(p-1)\}})$ when $p>1$ and $\eta=O(\delta^{1/(1-r)})$ when $p=1$.

We next evaluate the full local utility $V_i(b_i)$ using the pairwise relation~\eqref{eq::V_i}.
Algorithm~\ref{alg::local_eval}, presented below, simply enumerates all pairs of loss pieces, calls Algorithm~\ref{alg::evaluate_pairwise} for each pair, and returns the best value. Since the maximum is taken over approximate pairwise values, the accuracy of the local evaluation is inherited directly from Lemma~\ref{lem::eval_approx}.

\begin{lemma}
\label{lem::evaluate_vi}
    Fix $i\in[N]$ and $b_i \in \RR_+$. Suppose Assumptions~\ref{asp::regularity} and~\ref{asp::inner_oracle} hold. Then Algorithm~\ref{alg::local_eval} runs with oracle precision $\epsilon>0$ and golden-section tolerance $\eta > 0$, returns a feasible local solution $\hat s_i$ and a value $\hat V_i(b_i)$ satisfying
    \begin{align*}
        \left| V_i(b_i) - \hat V_i(b_i) \right| \leq 
        \begin{cases}
        O\left(\epsilon + \eta^{\min\{1/p,\,1-1/p\}} \right) &\mathrm{if}\ p>1 \\
        O\left(\epsilon + \eta^{1-r} \right) &\mathrm{if}\ p=1
        \end{cases}
    \end{align*}
    with the running time complexity of
    \begin{align*}
        \widetilde O \left(
        \big( 
            \textstyle \sum_{1\leq k_1 < k_2 \leq K}
            \mathsf{Cost}_{k_1,\epsilon}
            +
            \mathsf{Cost}_{k_2,\epsilon}
        \big)
        \log^2 ({1}/{\eta}) \right).
    \end{align*}
\end{lemma}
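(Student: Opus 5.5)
Lemma~\ref{lem::evaluate_vi} follows from Lemma~\ref{lem::eval_approx} combined with the pairwise decomposition~\eqref{eq::V_i} established in Theorem~\ref{thm::decomposition}. Algorithm~\ref{alg::local_eval} computes $\hat V_i(b_i) = \max_{k_1<k_2} \hat V_i^{(k_1,k_2)}(b_i)$ and returns the local solution $\hat s_i$ of a maximizing pair. We verify three things: feasibility, accuracy, and running time.

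\emph{Feasibility.} By Lemma~\ref{lem::eval_approx}, every call to Algorithm~\ref{alg::evaluate_pairwise} returns a feasible $\hat s$ for~\eqref{eq::U_i^k}. Any feasible point of~\eqref{eq::U_i^k} extends to a feasible point of the full local problem~\eqref{eq::full_V_i}: set $\alpha_k=0$ for $k\notin\{k_1,k_2\}$, choose arbitrary feasible $v_k$ (for instance $v_k=0$, since $c(\hat z_i,\hat z_i)=0$), and note that the budget constraint holds because $\alpha_j c(\hat z_i+v_j,\hat z_i)\le\beta_j$ and $\beta_1+\beta_2=b_i$. So $\hat s_i$ is feasible for~\eqref{eq::full_V_i}.

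\emph{Accuracy.} Let $\Delta$ denote the uniform pairwise error bound from Lemma~\ref{lem::eval_approx}. Its hidden constants depend only on problem data (growth constants, $\rho$, the bounds on feasible perturbations), not on the pair. Since there are finitely many pairs, we can take the maximum of these constants over all $O(K^2)$ pairs and treat $\Delta$ as uniform. The max operator is $1$-Lipschitz in the sup norm, so
\[
\bigl|\max_{k_1<k_2} V_i^{(k_1,k_2)}(b_i) - \max_{k_1<k_2} \hat V_i^{(k_1,k_2)}(b_i)\bigr| \le \max_{k_1<k_2}\bigl|V_i^{(k_1,k_2)}(b_i)-\hat V_i^{(k_1,k_2)}(b_i)\bigr| \le \Delta.
\]
The left-hand side is exactly $|V_i(b_i)-\hat V_i(b_i)|$ by~\eqref{eq::V_i}. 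This gives the claimed bound in both regimes, $p>1$ and $p=1$.

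\emph{Running time.} The cost is the sum of the per-pair costs $\widetilde O\bigl((\mathsf{Cost}_{k_1,\epsilon}+\mathsf{Cost}_{k_2,\epsilon})\log^2(1/\eta)\bigr)$ over all pairs. Taking the final maximum adds only $O(K^2)$ comparisons, which is absorbed into the bound.

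The only delicate point is the uniformity of the constants in Lemma~\ref{lem::eval_approx} across pairs. If those constants depend on $(k_1,k_2)$ through piece-specific growth or Lipschitz data, we simply take the worst case over the finitely many pieces, and the $O(\cdot)$ statement is unaffected.
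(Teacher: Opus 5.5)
Your proposal is correct and follows the paper's argument, which simply combines the pairwise decomposition \eqref{eq::V_i} with Lemma~\ref{lem::eval_approx}. Your use of the $1$-Lipschitz property of the maximum, the zero-weight feasibility extension, and the per-pair summation of costs spell out what the paper leaves implicit, and the uniformity concern you raise is already handled in the paper's proof of Lemma~\ref{lem::eval_approx}, whose constants $C$, $G$, $R$ are maxima over all samples, pieces, and pairs.
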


\begin{figure*}[!t]
\vspace{-1em}
\begin{center}
\begin{minipage}[t]{0.63\textwidth}
\begin{algorithm}[H]
    \small
    \caption{\texttt{LocalEval}}
    \label{alg::local_eval}
    \begin{algorithmic}
        \REQUIRE Sample $i$, budget $b_i$, tolerance $\eta>0$, oracle accuracy $\epsilon>0$ \\[1ex]
        \hspace{-1em}\textbf{Initialize:} $\hat V_i(b_i) \gets -\infty$ \vspace{0.5ex}

        \FOR{$1\leq k_1<k_2\leq K$} \vspace{0.5ex}
            \STATE $(V,s)\gets \texttt{PairEval}(i,k_1,k_2,b_i,\eta,\epsilon)$ \\[0.5ex]
            \IF{$V>\hat V_i(b_i)$} \vspace{0.5ex}
                \STATE $\hat V_i(b_i) \gets V, \ \hat s \gets s, \ (\hat k_1,\hat k_2) \gets (k_1,k_2)$ \\[0.5ex]
            \ENDIF \vspace{0.5ex}
        \ENDFOR
        \vspace{1ex}

        \ENSURE $\hat V_i(b_i) \approx V_i(b_i)$, optimizer $\hat s$ \& active pair $(\hat k_1,\hat k_2)$
    \end{algorithmic}
\end{algorithm}
\end{minipage}
\end{center}
\end{figure*}

With an efficient procedure for evaluating the local utilities $V_i$ in place, we now solve the upper-level problem~\eqref{eq::master}. For a fixed multiplier $\lambda\geq 0$, the Lagrangian-penalized local budget problem is 
\begin{align}
\label{eq::decoupled}
    \max_{0 \leq b_i \leq N \rho} ~ V_i(b_i)-\lambda b_i .
\end{align}
The multiplier $\lambda$ penalizes the use of transportation budget. Thus, for a fixed value of $\lambda$, the problem separates across empirical samples, and each local budget can be computed independently.
Algorithm~\ref{alg::budget_eval} solves this fixed-$\lambda$ problem by a one-dimensional golden-section search over $b_i$. Each evaluation of $V_i(b_i)-\lambda b_i$ calls Algorithm~\ref{alg::local_eval} to approximate $V_i(b_i)$ and store the corresponding local optimizer. The outer routine, Algorithm~\ref{alg::master_eval}, then searches for a multiplier whose induced local budgets satisfy the aggregate budget constraint. Since increasing $\lambda$ penalizes budget more heavily, the resulting aggregate budget is nonincreasing in $\lambda$, and bisection can be used.
In the following, we use the common tolerance parameter $\eta$ for both golden-section and bisection algorithms. 

\begin{figure*}[!t]
\begin{center}
\begin{minipage}{0.49\textwidth}
\begin{algorithm}[H]
    \small
    \caption{\texttt{BudgetEval}}
    \label{alg::budget_eval}
    \begin{algorithmic}
        \REQUIRE Sample $i$, multiplier $\lambda$, tolerances $\eta>0$, \\
        \hspace{2.2em} oracle precision $\epsilon>0$ \\[2.6ex]
        \hspace{-1em}\textbf{Initialize:} $\phi\gets\frac{\sqrt5-1}{2}$, $L_b\gets0$, $U_b\gets N\rho$ \\
        $b_i^{(1)}\gets U_b-\phi(U_b-L_b)$ \\
        $b_i^{(2)}\gets L_b+\phi(U_b-L_b)$ \\[0.5ex]

        $(\hat V^{(j)}, \dots) \gets \texttt{LocalEval}(i,b_i^{(j)},\eta,\epsilon), \hfill \forall j \in [2]$ \\[1ex]
        \WHILE{$U_b  - L_b > \eta$} \vspace{0.5ex}
            \IF{$\hat V^{(1)} - \lambda b_i^{(1)} < \hat V^{(2)} - \lambda b_i^{(2)}$} \vspace{0.5ex}
                \STATE $L_b\gets b_i^{(1)}$, $b_i^{(1)}\gets b_i^{(2)}$ \\[0.5ex]
                \STATE $b_i^{(2)}\gets L_b+\phi(U_b-L_b)$ \\[0.5ex]
                \STATE $\hat V^{(2)}\gets \texttt{LocalEval}(i,b_i^{(2)},\eta,\epsilon)$
            \ELSE \vspace{0.5ex}
                \STATE $U_b\gets b_i^{(2)}$, $b_i^{(2)}\gets b_i^{(1)}$ \\[0.5ex]
                \STATE $b_i^{(1)}\gets U_b-\phi(U_b-L_b)$ \\[0.5ex]
                \STATE $\hat V^{(1)}\gets \texttt{LocalEval}(i,b_i^{(1)},\eta,\epsilon)$
            \ENDIF \vspace{0.5ex}
        \ENDWHILE
        \vspace{1ex}

        \STATE $\hat b_i(\lambda) \gets L_b$ \\[0.5ex]
        \STATE $\big(\hat V_i(\hat b_i(\lambda)), \hat s, \hat k_1, \hat k_2) \gets \texttt{LocalEval}(i,\hat b_i(\lambda),\eta,\epsilon)$ \\[1ex]

        \ENSURE Budget $\hat b_i(\lambda) \approx b_i(\lambda)$ \& $\big(\hat V_i(\hat b_i(\lambda)\big), \hat s, \hat k_1, \hat k_2)$
    \end{algorithmic}
\end{algorithm}
\end{minipage}
\hfill
\begin{minipage}{0.495\textwidth}
\begin{algorithm}[H]
    \small
    \caption{\texttt{MasterEval}}
    \label{alg::master_eval}
    \begin{algorithmic}
        \REQUIRE Tolerances $\eta>0$, oracle precision $\epsilon>0$, and an upper bound $U_{\lambda}\geq \lambda^\star$ \\
        \hspace{-1em}\textbf{Initialize:} $\lambda^{(1)} \gets 0$, $\lambda^{(2)} \gets U_\lambda $ \\[0.5ex] 
        \STATE $\hat b_i^{(j)} \gets \texttt{BudgetEval} (i,\lambda^{(j)},\eta,\epsilon), \hfill \forall j \in [2], \forall i \in [N]$ \\[1ex]

        \WHILE{$\lambda^{(2)}-\lambda^{(1)}>\eta$} \vspace{0.5ex}
            \STATE $\displaystyle \bar \lambda \gets \frac{\lambda^{(1)}+\lambda^{(2)}}{2}$ \\[0.5ex]

            \STATE $\bar b_i \gets
            \texttt{BudgetEval}(i,\bar\lambda,\eta,\epsilon), \hfill \forall i \in [N]$ \\[1ex]

            \IF{$\sum_{i=1}^{N} \bar b_i > N \rho$} \vspace{1ex}
                \STATE $\lambda^{(1)}\gets\bar\lambda$ \\[0.5ex]
                \STATE $\hat b_i^{(1)} \gets \bar b_i, \hfill \forall i \in [N]$ \\[0.5ex]
            \ELSE \vspace{0.5ex}
                \STATE $\lambda^{(2)}\gets\bar\lambda$ \\[0.5ex]
                \STATE $\hat b_i^{(2)} \gets \bar b_i, \hfill \forall i \in [N]$ \\[0.5ex]
            \ENDIF \vspace{0.5ex}
        \ENDWHILE
        \vspace{1ex}

        \STATE $\displaystyle \theta \gets \frac{N \rho - \sum_{i=1}^{N} \hat b_i^{(2)}}{\sum_{i=1}^{N} \hat b_i^{(1)} - \sum_{i=1}^{N} \hat b_i^{(2)}}$ \\[1ex]
        \STATE $\bar b_i \gets \theta \hat b_i^{(1)} + (1-\theta) \hat b_i^{(2)}, \hfill \forall i \in [N]$ \\[0.5ex]

        \STATE $\big(\hat V_i(\bar b_i), \hat s_i, \hat k_{i1}, \hat k_{i2} \big) \!\gets\! \texttt{LocalEval}(i, \bar b_i, \eta, \epsilon), \hfill \forall i \in [N]$ \\[1ex]

        \ENSURE Budgets $\{\bar b_i\}_{i=1}^{N}$ \& $\{ \big(\hat V_i(\bar b_i), \hat s_i, \hat k_{i1}, \hat k_{i2} \big)\}_{i=1}^{N}$
    \end{algorithmic}
\end{algorithm}
\end{minipage}
\end{center}
\end{figure*}

Our next lemma shows that each of these maximization problems can be solved efficiently.

\begin{lemma}
\label{lem::Bi_eval}
    Fix $i\in[N]$ and $\lambda\geq0$. Suppose Assumptions~\ref{asp::regularity} and~\ref{asp::inner_oracle} hold. Then Algorithm~\ref{alg::budget_eval} runs with oracle precision $\epsilon>0$ and golden-section tolerance $\eta > 0$, returns $\big(\hat b_i(\lambda), \hat V_i(\hat b_i(\lambda))\big)$ satisfying
    \begin{align*}
        \max_{0\leq b\leq N\rho}
        \left\{
            V_i(b)-\lambda b
        \right\}
        -
        \left(
            \hat V_i(\hat b_i(\lambda))-\lambda\hat b_i(\lambda)
        \right)
        \leq
        \begin{cases}
        O\left(\epsilon + \eta^{\min\{1/p,\,1-1/p\}} \right) &\mathrm{if}\ p>1 \\
        O\left(\epsilon + \eta^{1-r} \right) &\mathrm{if}\ p=1
        \end{cases}
    \end{align*}
    with the running time complexity of
    \begin{align*}
        \widetilde O \!\left(
            K^2
            \cdot
            \mathsf{Cost}_{\epsilon}
            \cdot
            \log^3(1 / \eta)
        \right),
    \end{align*}
    where $\mathsf{Cost}_{k,\epsilon}\leq\mathsf{Cost}_{\epsilon}$ for all $k\in[K]$.
\end{lemma}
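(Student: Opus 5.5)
The plan is to first establish the structural properties of $V_i$ that make golden-section search valid, and then combine them with the evaluation error from Lemma~\ref{lem::evaluate_vi}.

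\textbf{Step 1: $V_i$ is concave and nondecreasing on $[0,N\rho]$.} Concavity follows from the perspective reformulation of \eqref{eq::full_V_i}. Substitute $w_k=\alpha_k v_k$. The objective becomes $\sum_k \alpha_k\ell_k(\hat z_i+w_k/\alpha_k)$, which is jointly concave because it is a sum of perspectives of concave functions. The budget constraint $\sum_k \alpha_k c(\hat z_i+w_k/\alpha_k,\hat z_i)\leq b_i$ is jointly convex in $(\alpha,w,b_i)$, and $\cZ$-membership becomes the convex condition $\hat z_i\alpha_k+w_k\in\alpha_k\cZ$, interpreted through recession cones when $\alpha_k=0$. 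The value function of a jointly concave maximization over a convex set parameterized linearly by $b_i$ is therefore concave in $b_i$. Monotonicity is immediate because enlarging $b_i$ enlarges the feasible set. Consequently $b\mapsto V_i(b)-\lambda b$ is concave, and hence unimodal, on $[0,N\rho]$.

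\textbf{Step 2: a H\"older-type modulus for $V_i$.} Under Assumption~\ref{asp::regularity}\ref{asp::regular::c} I would show $|V_i(b)-V_i(b')|\leq \omega(|b-b'|)$, where $\omega(t)=O(t^{\min\{1/p,1-1/p\}})$ for $p>1$ and $\omega(t)=O(t^{1-r})$ for $p=1$. These are the same moduli that presumably drive Lemma~\ref{lem::eval_approx}, so I would reuse that appendix argument. The reason is that $\ell_k$ is bounded above by an affine function, so the extra gain from an additional budget $\delta$ spent on mass $\alpha$ moving distance at most $(\delta/\alpha)^{1/p}$ is controlled after optimizing over $\alpha$. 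The same modulus bounds the Lipschitz-like variation of $V_i(b)-\lambda b$, provided $\lambda$ stays in the bounded range $[0,U_\lambda]$.

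\textbf{Step 3: golden-section search under perturbed evaluations.} Write $\Delta$ for the uniform evaluation error from Lemma~\ref{lem::evaluate_vi}. Each comparison in Algorithm~\ref{alg::budget_eval} then compares the true values $F(b)=V_i(b)-\lambda b$ up to an error $2\Delta$. I would use the standard robustness argument for golden-section search on a concave function: whenever the search discards a subinterval, every point in that subinterval has $F$ at most $\max F$ over the retained interval plus $O(\Delta)$. This holds because a wrong comparison only happens when the two probe values are within $2\Delta$ of each other, and by concavity the discarded piece lies below the line through the probes. By induction, the final interval $[L_b,U_b]$ of width at most $\eta$ contains a point $\tilde b$ with $F(\tilde b)\geq \max F - O(\Delta)$. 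I would take care that the errors do not accumulate. Each step's loss is measured against the current maximum, not added to a running total, and this works because the comparisons are against fixed-accuracy evaluations. Step 2 then gives $F(L_b)\geq F(\tilde b)-\omega(\eta)-\lambda\eta$. Finally, Lemma~\ref{lem::evaluate_vi} at $\hat b_i(\lambda)=L_b$ adds one more $\Delta$. Summing these contributions gives the stated bound.

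\textbf{Step 4: complexity.} The search runs for $O(\log(N\rho/\eta))$ iterations, with one \texttt{LocalEval} call per iteration plus a constant number of extra calls. Each call costs $\widetilde O(K^2\mathsf{Cost}_\epsilon\log^2(1/\eta))$ by Lemma~\ref{lem::evaluate_vi}, which gives $\widetilde O(K^2\mathsf{Cost}_\epsilon\log^3(1/\eta))$ in total.

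The main obstacle is Step 3. Concavity gives unimodality, but the non-accumulation argument must be carried out carefully when the comparisons are inexact.
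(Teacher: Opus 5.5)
Your route matches the paper's proof. Both use concavity of $V_i$ (Lemma~\ref{lem::concave_V}) for unimodality, a H\"older bound on $V_i$ (Lemma~\ref{lem::continuity_V}), and a $2\mathcal{E}$ threshold for wrong comparisons, where $\mathcal{E}$ is the uniform evaluation error from Lemma~\ref{lem::evaluate_vi}. Both then add a final-interval term $G\eta^{1/p}+\lambda\eta$, one more $\mathcal{E}$ at $L_b$, and bound the cost by iterations times the \texttt{LocalEval} cost.

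\textbf{Step 2 (a correction).} The modulus of $V_i$ in $b$ is $G_i t^{1/p}$ for every $p\ge 1$, which is Lipschitz when $p=1$. The paper reduces to $V_i(\delta)-V_i(0)$ using concavity and Lemma~\ref{lem::subadditivity}. It then bounds $\ell_k(\hat z_i+v_k)-\ell_k(\hat z_i)\le G_i\|v_k\|$ via supergradients at $\hat z_i$, and uses Jensen to get $\sum_k\alpha_k\|v_k\|\le\big(\sum_k\alpha_k c(\hat z_i+v_k,\hat z_i)\big)^{1/p}\le\delta^{1/p}$. Your own heuristic $\alpha(\delta/\alpha)^{1/p}\le\delta^{1/p}$ gives the same bound once this subadditivity reduction is added. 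The exponents $1-1/p$ and $1-r$ do not come from this modulus; they enter only through $\mathcal{E}$, via the $\alpha$-continuity (Lemma~\ref{lem::continuity_alpha}) inside \texttt{PairEval}. Your weaker modulus is still valid on $[0,N\rho]$, so nothing breaks. However, the argument for Lemma~\ref{lem::eval_approx} concerns $\alpha$ and $\beta$, not $b$, so it is not the one to reuse.

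\textbf{Step 3 (the genuine gap).} Your per-step fact is that a discarded point exceeds the probes by $O(\mathcal{E})$, by extrapolating along the chord. Followed by induction, this yields only $\max_{I_T}U\ge\max U-cT\mathcal{E}$ with $T=O(\log(N\rho/\eta))$. That extra logarithmic factor is not allowed by the stated bound. Measuring each step's loss against the current interval maximum is exactly what makes the losses add up.

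The missing idea is that golden-section search reuses the surviving probe. At each step the better probe in $\hat U(b):=\hat V_i(b)-\lambda b$ is kept and compared with exactly one new point. So the retained probe is the running argmax of $\hat U$ over all evaluated points, and it always lies in the current interval. This needs the survivor's value to be copied, which the printed Algorithm~\ref{alg::budget_eval} omits.

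Let $b^\star$ maximize $U(b):=V_i(b)-\lambda b$.
\begin{itemize}
\item If $b^\star$ is never discarded, it lies in the final interval, and Step 2 finishes the argument.
\item Otherwise, consider the first step at which $b^\star$ is discarded. Let $x_{\mathrm{l}}$ be the losing probe, adjacent to the discarded piece, and $x_{\mathrm{w}}$ the winner. Then $\hat U(x_{\mathrm{l}})\le\hat U(x_{\mathrm{w}})$, so $U(x_{\mathrm{l}})-U(x_{\mathrm{w}})\le 2\mathcal{E}$. The discarded piece has length at most $\phi^{-1}|x_{\mathrm{w}}-x_{\mathrm{l}}|$, so concavity gives $U(b^\star)\le U(x_{\mathrm{l}})+\phi^{-1}\big(U(x_{\mathrm{l}})-U(x_{\mathrm{w}})\big)^+$. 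Hence $U(x_{\mathrm{w}})\ge U(b^\star)-(2+2/\phi)\mathcal{E}$.
\item From then on the incumbent's $\hat U$ never decreases. So the final incumbent $\bar b$ satisfies $U(\bar b)\ge \hat U(x_{\mathrm{w}})-\mathcal{E}\ge U(b^\star)-(4+2/\phi)\mathcal{E}$.
\item Finally, $|L_b-\bar b|\le\eta$, so Step 2 transfers this bound to $L_b$.
\end{itemize}

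The paper's proof simply asserts this non-accumulation (``the total suboptimality gap is given by the maximum of the discard error and the final resolution error''). The incumbent argument above is what you need to supply to make Step 3 rigorous.
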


The proof of Lemma~\ref{lem::Bi_eval} is presented in Appendix~\ref{proof::lem::Bi_eval}.
It remains to control the outer search over the multiplier $\lambda$. Since each $V_i$ is nondecreasing in its budget argument, increasing $\lambda$ can only reduce the budgets selected by the Lagrangian search. Thus, the aggregate budget is nonincreasing, and bisection can be used to locate a multiplier whose induced allocation satisfies the budget constraint. Algorithm~\ref{alg::master_eval} maintains two multipliers: a lower multiplier $\lambda^{(1)}$ whose induced aggregate budget is above $\rho$, and an upper multiplier $\lambda^{(2)}$ whose induced aggregate budget is below $\rho$. The final interpolation step produces an allocation with aggregate budget exactly equal to $\rho$.

Our next theorem provides an end-to-end guarantee on both the running time and the accuracy of the proposed algorithm, together with an explicit procedure for constructing a worst-case distribution from the computed solution.

\begin{theorem}
\label{thm::error_analysis}
    Let $V^\star$ denote the optimal value of~\eqref{eq::master}.
    Suppose Assumptions~\ref{asp::regularity} and~\ref{asp::inner_oracle} hold. Then Algorithm~\ref{alg::master_eval} runs with oracle precision $\epsilon > 0$ and tolerance $\eta > 0$, returns a feasible budget allocation $\{\bar{b}_i\}_{i=1}^{N}$ satisfying
    \begin{align*}
        \left| V^\star
        -
        \frac{1}{N} \sum_{i=1}^{N} V_i(\bar b_i) \right|
        \leq
        \begin{cases}
        O\left(\epsilon + \eta^{\min\{1/p,\,1-1/p\}} \right) &\mathrm{if}\ p>1 \\
        O\left(\epsilon + \eta^{1-r} \right) &\mathrm{if}\ p=1
        \end{cases}
    \end{align*}
    with the running time complexity of
    \[
        \widetilde O \!\left(
            N 
            \cdot 
            K^2
            \cdot
            \mathsf{Cost}_{\epsilon}
            \cdot
            \log^4(1/ \eta)
        \right),
    \]
    where $\mathsf{Cost}_{k,\epsilon}\leq\mathsf{Cost}_{\epsilon}$ for all $k\in[K]$. Moreover, the corresponding $2N$-point distribution 
    \[
    \QQ_{2N} := \frac{1}{N} \sum_{i=1}^N \left(\hat\alpha_{i\hat{k}_{i1}} \delta_{\hat{z}_i + \hat v_{i\hat{k}_{i1}}}+\hat\alpha_{i\hat{k}_{i2}} \delta_{\hat{z}_i + \hat v_{i\hat{k}_{i2}}}\right)
    \]
    satisfies $\QQ_{2N}\in \cP$ and 
    \[
    \max_{\bQ \in \cP}\left\{\mathbb{E}_{z \sim \bQ} \left[\ell(z) \right] \right\} - \mathbb{E}_{z \sim \bQ_{2N}} \left[\ell(z) \right]\leq 
    \begin{cases}
    O\left(\epsilon + \eta^{\min\{1/p,\,1-1/p\}} \right) &\mathrm{if}\ p>1 \\
    O\left(\epsilon + \eta^{1-r} \right) &\mathrm{if}\ p=1.
    \end{cases}
    \]
\end{theorem}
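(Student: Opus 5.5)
The proof will combine three ingredients: concavity of each $V_i$, Everett's theorem for the outer bisection, and Lemma~\ref{lem::Bi_eval} for the inner Lagrangian subproblems.

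\textbf{Step 1: concavity and bounded slope.} By the perspective reformulation, $V_i$ is the optimal value of a jointly concave program in $(\alpha,\beta,\alpha v)$ whose right-hand side is $b_i$. Hence $V_i$ is concave and nondecreasing on $\bR_+$, and so is $\frac1N\sum_i V_i$. Two consequences follow.
\begin{itemize}
\item Strong duality holds for~\eqref{eq::master}, with a multiplier $\lambda^\star\ge 0$.
\item The aggregate budget map $\lambda\mapsto\sum_i b_i(\lambda)$ is a monotone (set-valued) map, which justifies bisection.
\end{itemize}
I would bound $\lambda^\star$ by the right-derivative of $V_i$ at $0$ in the case $p>1$. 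For $p=1$, I would use the sublinear growth to bound $\lambda^\star$ by an explicit constant, which makes $U_\lambda$ meaningful. I would also record a H\"older-type modulus: from $c\ge\|\cdot\|^p$ and the affine or sublinear upper bound on $\ell_k$,
\[
|V_i(b)-V_i(b')| \lesssim |b-b'|^{\min\{1/p,\,1-1/p\}} \quad (p>1), \qquad |V_i(b)-V_i(b')| \lesssim |b-b'|^{1-r} \quad (p=1),
\]
at least away from the origin or on the bounded range $[0,N\rho]$. This modulus is what converts the $\eta$-perturbations of budgets into value errors.

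\textbf{Step 2: accuracy.} The bisection terminates with $\lambda^{(1)}\le\lambda^{(2)}\le\lambda^{(1)}+\eta$. The corresponding allocations $\hat b^{(1)}$ and $\hat b^{(2)}$ are approximate Lagrangian maximizers by Lemma~\ref{lem::Bi_eval}, with error $\delta:=O(\epsilon+\eta^{\kappa})$, where $\kappa$ denotes the relevant exponent. They also bracket the target: $\sum_i\hat b^{(1)}_i>N\rho\ge\sum_i\hat b^{(2)}_i$. (The degenerate case where $\lambda^{(1)}=0$ remains feasible is handled by Everett directly with a slack budget.)

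The key step is to show that the convex combination $\bar b=\theta\hat b^{(1)}+(1-\theta)\hat b^{(2)}$ is near-optimal for $L_{\bar\lambda}$ for any $\bar\lambda\in[\lambda^{(1)},\lambda^{(2)}]$. Concavity of $V_i$ gives
\[
V_i(\bar b_i)\ge\theta V_i(\hat b_i^{(1)})+(1-\theta)V_i(\hat b_i^{(2)}).
\]
Each $\hat b^{(j)}$ is $\delta$-optimal for $L_{\lambda^{(j)}}$, and switching multipliers costs at most $|\lambda^{(1)}-\lambda^{(2)}|\cdot N\rho\le\eta N\rho$. Hence $\bar b$ is $(\delta+\eta N\rho)$-optimal for $L_{\lambda^{(2)}}$, and its resource usage is exactly $\rho$. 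Everett's inequality~\eqref{eq::dual-opt} with $\bar\rho=\rho$ then yields $V^\star-\frac1N\sum_iV_i(\bar b_i)\le\delta+O(\eta)$. The reverse inequality is immediate from feasibility.

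\textbf{Step 3: complexity and distribution.} The complexity count is routine. There are $O(\log(U_\lambda/\eta))$ bisection steps, each requiring $N$ calls to Algorithm~\ref{alg::budget_eval} at cost $\widetilde O(K^2\mathsf{Cost}_\epsilon\log^3(1/\eta))$. The final $N$ calls to Algorithm~\ref{alg::local_eval} are dominated by this.

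For $\QQ_{2N}$, the local solutions $\hat s_i$ returned by Lemma~\ref{lem::evaluate_vi} are feasible for~\eqref{eq::U_i^k} with budget $\bar b_i$. Summing $\hat\alpha c\le\hat\beta$ over the atoms and over $i$ gives transport cost at most $\frac1N\sum_i\bar b_i=\rho$. The obvious coupling that sends each $\hat z_i$ to its two atoms then certifies $\QQ_{2N}\in\cP$. Its expected loss is at least $\frac1N\sum_i\hat V_i(\bar b_i)$, since $\ell\ge\ell_k$. This is within $O(\epsilon+\eta^\kappa)$ of $\frac1N\sum_iV_i(\bar b_i)$ by Lemma~\ref{lem::evaluate_vi}, and combining with Step 2 and Theorem~\ref{thm::decomposition} gives the claim.

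\textbf{Main obstacle.} I expect Step 2 to be the hardest part: justifying that the interpolated allocation inherits near-optimality. This requires concavity of $V_i$, which is only implicit in the paper via the perspective reformulation. It also requires the returned $\hat V$ values to relate to the true $V_i$ at the interpolated point. I would first try to prove concavity of $V_i$ directly from~\eqref{eq::full_V_i} by mixing feasible solutions in perspective variables $(\alpha_k,\alpha_kv_k)$.
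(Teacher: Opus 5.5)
Your proposal is correct and follows essentially the paper's argument: concavity of $V_i$ gives Jensen on the interpolated allocation $\bar b$, and Lemma~\ref{lem::Bi_eval} supplies approximate Lagrangian optimality at $\lambda^{(1)},\lambda^{(2)}$. A weak-duality (Everett) bound then absorbs the bisection gap at cost $O(\eta)$, and the complexity count and the feasibility/value check for $\QQ_{2N}$ match the paper's. The only difference in Step 2 is bookkeeping: the paper averages the two dual-function bounds at $\lambda_l,\lambda_u$ using $\theta(\hat r(\lambda_l)-\rho)=(1-\theta)(\rho-\hat r(\lambda_u))\le\rho$, rather than transferring near-optimality to $L_{\lambda^{(2)}}$.

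One side remark needs fixing. For $p>1$ the right-derivative of $V_i$ at $0$ is typically $+\infty$ (e.g.\ $V_i(b)-V_i(0)=\|a\|_* b^{1/p}$ for a linear loss), so it cannot bound $\lambda^\star$. Instead, compare the Lagrangian at $b_i^\star(\lambda)$ with $b=0$ and use $V_i(b)-V_i(0)\le G_i b^{1/p}$, as in Lemma~\ref{lem::U_lambda}. Since $U_\lambda$ is an input to Algorithm~\ref{alg::master_eval}, this slip does not affect the theorem itself.
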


The proof is provided in Appendix~\ref{proof::thm::error_analysis}. Fixing $p\geq 1$ and/or $0\leq r<1$, and choosing $\epsilon = O(\delta)$ alongside an appropriate polynomial tolerance $\eta$ (e.g., $\eta = O(\delta^{\max\{p, p/(p-1)\}})$ for $p>1$), the theorem implies that the proposed algorithm produces a $\delta$-optimal worst-case distribution within a running time that scales linearly with the number of samples $N$ and the local worst-case oracle complexity $\mathsf{Cost}_{\delta}$, quadratically with the number of components $K$, and only poly-logarithmically with $1/\delta$.
The dependence on $1/\delta$ is particularly significant, as it shows that the overall computational overhead of the proposed framework is modest relative to the complexity of the local worst-case oracle itself. In other words, up to logarithmic factors, the algorithm essentially preserves the accuracy dependence of the underlying local oracle.
Moreover, the algorithm constructs a worst-case distribution supported on at most $2N$ points with an optimality gap of at most $O(\delta)$. To the best of our knowledge, this is the first tailored algorithm for the worst-case expectation problem with such guarantees.

\begin{remark}
Algorithm~\ref{alg::master_eval} requires an upper bound on \(\lambda^\star\) as input. Although the algorithm leaves this bound unspecified, Lemma~\ref{lem::U_lambda} in Appendix~\ref{proof::lem::Bi_eval} shows that such a bound can be computed a priori using the supergradients of \(\ell_k\) evaluated at the empirical samples $\hat z_i$.
\end{remark}

\subsection{Compression to (N+1)-Point Worst-Case Distribution}
\label{subsec::support_compression}

Despite the computational efficiency of the proposed algorithm, the constructed worst-case distribution does not attain the sparsity guarantee of Proposition~\ref{prop::nonconvex-equivalence}. In this subsection, we develop a post-processing procedure that compresses the $2N$-point distribution returned by Algorithm~\ref{alg::master_eval} into a distribution supported on at most $N+1$ points, without sacrificing optimality and with only negligible additional computational cost.

Recall that Algorithm~\ref{alg::master_eval} constructs a distribution in which the probability mass associated with each empirical sample $\hat z_i$ is split between two perturbed points, namely $\hat z_i + \hat v_{i\hat k_{i1}}$ and $\hat z_i + \hat v_{i\hat k_{i2}}$, with corresponding masses $\hat\alpha_{i\hat k_{i1}}$ and $\hat\alpha_{i\hat k_{i2}}$. Since the worst-case expectation problem~\eqref{eq::worst_case} is equivalent to~\eqref{eq::equivalence_primal}, one may fix $v_{ik} = \hat v_{ik}$ for $i\in [N]$ and $k\in \{\hat k_{i1},\hat k_{i2}\}$, and set $v_{ik}=0$ for all remaining indices. The resulting problem~\eqref{eq::equivalence_primal} is then defined only over the weights $\{\alpha_{ik}\}$. This reformulation has two key properties: (1) it is an LP; and (2) by standard LP theory, it admits an optimal BFS with at most $N+1$ nonzero variables. Consequently, one can recover an optimal worst-case distribution supported on at most $N+1$ points without sacrificing its optimality.

While this post-processing approach is theoretically straightforward, its practical efficiency depends on recovering an optimal BFS efficiently. Generic LP solvers can be used, but they often fail to exploit the underlying structure of the problem and may therefore incur unnecessary computational overhead. Fortunately, in our setting, the resulting LP admits a much simpler characterization: it reduces to a Fractional Knapsack Problem. As a result, a simple greedy algorithm recovers the sparse optimal solution in time $O(N\log N)$. The next theorem formalizes this result.

\begin{theorem}
\label{thm::N+1-reconstruction}
There exists an algorithm that, given the output of Algorithm~\ref{alg::master_eval}, constructs an approximate worst-case distribution $\bQ_{N+1} \in \cP$ supported on at most $N+1$ atoms in time $O(N\log N)$ such that
\[
\mathbb{E}_{z \sim \bQ_{N+1}} \left[\ell(z)\right]
\geq
\mathbb{E}_{z \sim \bQ_{2N}} \left[\ell(z)\right].
\]
\end{theorem}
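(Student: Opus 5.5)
The plan is to fix the perturbations returned by Algorithm~\ref{alg::master_eval} and reoptimize only the mixing weights. For each $i\in[N]$, write the two candidate atoms as $a_i := \hat z_i + \hat v_{i\hat k_{i1}}$ and $a_i' := \hat z_i + \hat v_{i\hat k_{i2}}$. Record their values $L_i := \ell_{\hat k_{i1}}(a_i)$, $L_i' := \ell_{\hat k_{i2}}(a_i')$ and their costs $c_i := c(a_i,\hat z_i)$, $c_i' := c(a_i',\hat z_i)$. Consider the LP obtained from \eqref{eq::equivalence_primal} by restricting to these two atoms per sample:
\begin{align*}
\max_{\alpha_i\in[0,1]} \Big\{ \tfrac1N \textstyle\sum_{i} \big(\alpha_i L_i + (1-\alpha_i)L_i'\big) : \tfrac1N \sum_i \big(\alpha_i c_i + (1-\alpha_i) c_i'\big) \le \rho \Big\}.
\end{align*}
Here $\alpha_i$ is the mass placed on $a_i$. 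The weights $\hat\alpha_{i\hat k_{i1}}$ are feasible for this LP, because $\bQ_{2N}\in\cP$ by Theorem~\ref{thm::error_analysis}, and they reproduce $\EE_{\bQ_{2N}}[\ell]$ as the objective value. We also use $\ell\ge \ell_k$ pointwise, so any distribution built from these atoms has true expected loss at least the LP objective. It follows that any optimal solution of the LP gives a distribution $\bQ$ with $\EE_{\bQ}[\ell]\ge \EE_{\bQ_{2N}}[\ell]$. Feasibility is immediate: the coupling that sends the mass $1/N$ at $\hat z_i$ to $a_i,a_i'$ has expected cost at most $\rho$, so $\bQ\in\cP$.

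Next, recast the LP as a fractional knapsack. For each $i$, relabel so that $a_i'$ is the cheaper atom, $c_i'\le c_i$. If $L_i'\ge L_i$, set $\alpha_i=0$, since this is optimal and uses the least budget. Otherwise, start from $\alpha_i=0$ for all $i$; if even this baseline violates the budget, swap roles — though the feasibility of $\hat\alpha$ shows the all-cheap baseline is always feasible. Increasing $\alpha_i$ then buys value $\Delta_i := L_i - L_i' >0$ at cost $w_i := c_i - c_i'\ge 0$. The residual capacity is $B := N\rho - \sum_i c_i'\ge 0$. The LP becomes $\max\{\sum_i \Delta_i\alpha_i : \sum_i w_i\alpha_i\le B,\ \alpha\in[0,1]^N\}$, which is exactly a fractional knapsack.

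The greedy algorithm solves it. Items with $w_i=0$ take $\alpha_i=1$. The remaining items are sorted by ratio $\Delta_i/w_i$ in decreasing order and filled with $\alpha_i=1$ until the capacity would be exceeded; the next item receives the fractional remainder, and all others get $0$. The standard exchange argument gives optimality. Sorting dominates the cost, so the running time is $O(N\log N)$.

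The sparsity count is the delicate part. Every sample with $\alpha_i\in\{0,1\}$ contributes a single atom. At most one item is fractional, and it contributes two atoms. The total is therefore at most $N+1$ atoms, which matches the BFS bound of Proposition~\ref{prop::nonconvex-equivalence}. The main care point is the bookkeeping that keeps every sample consistent with the relabeling: ties, $w_i=0$, and the case $L_i'\ge L_i$ must each be assigned integrally rather than fractionally. That way only the single threshold item can end up split.
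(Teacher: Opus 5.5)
Your proposal is correct and follows the paper's proof essentially step for step: freeze the $2N$ atoms returned by Algorithm~\ref{alg::master_eval}, eliminate one weight per sample to get a fractional knapsack with nonnegative residual capacity $N\rho-\sum_i c_i'$, solve it greedily in $O(N\log N)$, and note that only the threshold item is split, giving at most $N+1$ atoms (you also make explicit, via feasibility of $\hat\alpha$, the comparison with $\bQ_{2N}$ that the paper leaves implicit). One small caveat: your claim that $\hat\alpha$ reproduces $\mathbb{E}_{z\sim\bQ_{2N}}[\ell(z)]$ needs $\ell(a_i)=\ell_{\hat k_{i1}}(a_i)$ and $\ell(a_i')=\ell_{\hat k_{i2}}(a_i')$ at the atoms, which the paper also assumes in the proof of Theorem~\ref{thm::error_analysis}; for approximate oracle outputs the LP objective at $\hat\alpha$ is in general only a lower bound on $\mathbb{E}_{z\sim\bQ_{2N}}[\ell(z)]$, so it is cleaner to use $\ell(a_i)$ and $\ell(a_i')$ themselves as the knapsack values, which makes the comparison exact and leaves the rest of your argument unchanged.
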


\begin{proof}
For each $i\in [N]$, define $\ell_i^+ := \ell_{\hat k_{i1}}(\hat z_i + \hat v_{i\hat k_{i1}})$, $\ell_i^- := \ell_{\hat k_{i2}}(\hat z_i + \hat v_{i\hat k_{i2}})$, $c_i^+ := c(\hat z_i + \hat v_{i\hat k_{i1}},\hat z_i)$, and $c_i^- := c(\hat z_i + \hat v_{i\hat k_{i2}},\hat z_i)$. Without loss of generality, assume that $c_i^+ \geq c_i^-$ for all $i\in [N]$.
Fix $v_{ik} = \hat v_{ik}$ for $i\in [N]$ and $k\in \{\hat k_{i1},\hat k_{i2}\}$, and set $v_{ik}=0$ for all remaining indices. Under this restriction, problem~\eqref{eq::equivalence_primal} reduces to
\[
\max \left\{
\frac{1}{N}\sum_{i=1}^{N} \left( \alpha_i^+ \ell_i^+ + \alpha_i^- \ell_i^- \right)
:
\alpha_i^+,\alpha_i^- \in \RR_+,\;
\alpha_i^+ + \alpha_i^- = 1,\ \forall i\in [N],\;
\sum_{i=1}^N \left( \alpha_i^+ c_i^+ + \alpha_i^- c_i^- \right) \leq N\rho
\right\}.
\]
Eliminating the variables $\alpha_i^-$ via the substitution $\alpha_i^- = 1-\alpha_i^+$, and defining $\tilde c_i := c_i^+ - c_i^- \geq 0$, $\tilde \ell_i := \ell_i^+ - \ell_i^-$, and $\tilde \rho := N\rho - \sum_{i=1}^N c_i^-\geq 0$, the above problem is equivalent to
\[
\left(\frac{1}{N}\sum_{i=1}^N \ell_i^- \right)
+
\max \left\{
\frac{1}{N}\sum_{i=1}^N \alpha_i^+ \tilde \ell_i
:
\alpha_i^+ \in [0,1],\ \forall i\in [N],\;
\sum_{i=1}^N \alpha_i^+ \tilde c_i \leq \tilde \rho
\right\}.
\]
The resulting optimization problem is precisely the Fractional Knapsack Problem and can be solved via a sorting-based greedy algorithm in time $O(N\log N)$ \cite[Chapter~16.2]{cormen2022introduction}. Moreover, the returned optimal solution has at most one fractional solution. Consequently, among the $2N$ candidate atoms, at most $N+1$ receive nonzero mass.
\end{proof}


\section{Primal DRO Problem}
\label{sec::primal-DRO}

Having established an efficient algorithm for solving the worst-case expectation problem~\eqref{eq::worst_case}, we now turn to the primal DRO problem
\begin{align}
\label{eq::primal_DRO}
    \min_{x \in \cX} \max_{\bQ \in \cP} \, \EE_{z \sim \bQ} [\ell(x, z)].
\end{align}
Throughout, we define $f(x, \bQ) := \EE_{z \sim \bQ} [\ell(x, z)]$.
Designing efficient first-order methods for~\eqref{eq::primal_DRO} is challenging since standard primal-dual algorithms are not well suited to optimization over probability measures. Specifically, gradient-based updates of the dual variable, corresponding to the worst-case distribution $\bQ$, generally fail to preserve feasibility with respect to the ambiguity set, and the resulting projection step is often computationally~intractable.

To address this issue, we propose the \emph{Distributional Best-Response Algorithm}, inspired by the best-response framework detailed in \cite[Algorithm~12.2]{orabona2019modern}. The main idea is to decouple the minimax interaction between the primal variable $x\in \cX$ and dual variable $\bQ\in \cP$. At each iteration, we first fix the current primal decision $x_t$ and allow the adversary to compute a worst-case distribution $\bQ_t$, namely its \emph{best response}. We then update the primal decision against this adversarial distribution using an online learning step. By \emph{freezing} the adversary's best response $\mathbb{Q}_t$ against the current decision $x_t$, we guarantee that the decision maker is always reacting to the maximum possible distributional shift admissible within the ambiguity set. The resulting procedure is formally described in Algorithm~\ref{alg::1}.

\begin{figure*}[!t]
\vspace{-1em}
\begin{center}
\begin{minipage}[t]{0.7\textwidth}
\begin{algorithm}[H]
    \small
    \caption{\texttt{Distributional Best-Response}}
    \label{alg::1}
    \begin{algorithmic}
        \REQUIRE Initial decision $x_1 \in \cX$ \\[1ex]
        \hspace{-1em}\textbf{Initialize:} $\bar{x}_0 = 0$, $\bar{\bQ}_0 = 0$ \vspace{0.5ex}

        \FOR{$t=1,2,\dots, T$} \vspace{0.5ex}
            \STATE $\bQ_t \approx \argmax_{\bQ \in \cP} f(x_t, \bQ)$; e.g., using Algorithm~\ref{alg::master_eval} and Theorem~\ref{thm::error_analysis} \\[0.5ex]
            \STATE $x_{t+1} \gets \mathcal{A}(x_t, f(\cdot, \bQ_t))$, where $\mathcal{A}$ is an online algorithm \\[0.5ex]
            \STATE $\bar{\bQ}_{t} \gets \frac{t-1}{t}\bar{\bQ}_{t-1} +\frac{1}{t}\bQ_t$ \\[0.5ex]
            \STATE $\bar{x}_{t} \gets \frac{t-1}{t}\bar{x}_{t-1}+\frac{1}{t}x_{t}$ \\[0.5ex]
        \ENDFOR
        \vspace{1ex}

        \ENSURE $(\bar x_T, \bar \bQ_T)$
    \end{algorithmic}
\end{algorithm}
\end{minipage}
\end{center}
\end{figure*}

For the update of the primal variable, rather than committing to a specific rule, we abstract the learning step to accommodate \emph{any} feasible online optimization algorithm $\mathcal{A}$. Depending on the geometry of $\cX$ and the analytical properties of the loss function, one can easily plug in methods such as Online Subgradient Descent~\cite{zinkevich2003online}, Online Mirror Descent~\cite{hazan2022introduction}, or projection-free Frank-Wolfe-type algorithms~\cite{hazan2012projection} without altering the fundamental structure of the framework.

Before proceeding to the formal analysis, we establish a criterion for evaluating the performance of Algorithm~\ref{alg::1}. Let $(x^\star, \bQ^\star)$ denote a saddle point of the minimax problem \eqref{eq::primal_DRO} satisfying
\begin{align*}
    f(x^\star, \bQ)\leq f(x^\star, \bQ^\star)\leq f(x, \bQ^\star), \quad \forall x\in \cX, \bQ\in \cP.
\end{align*}
One can only hope to numerically obtain a saddle point satisfying the above inequality up to a certain tolerance $\zeta$. We thus rely on the notion of $\zeta$-saddle point~\cite[Definition~12.10]{orabona2019modern}. 
\begin{definition}[$\zeta$-saddle point]
    Let $\zeta\geq 0$. A point $(\bar x, \bar \bQ)\in \cX\times \cP$ is called a $\zeta$-saddle point of $f$ if
    \begin{align*}
    f(\bar x, \bQ)-\zeta\leq f(\bar x,\bar \bQ)\leq f(x, \bar\bQ)+\zeta, \quad \forall x\in \cX, \bQ\in \cP.
\end{align*}
\end{definition}
Our goal is to ensure that the aggregated solution $(\bar{x}_T, \bar{\bQ}_T)$ produced by Algorithm~\ref{alg::1} is a $\zeta$-saddle point, for a sufficiently small $\zeta\geq 0$. To achieve this, it is common to analyze the \textit{duality gap}:
\[
    \mathrm{Gap}(\bar{x}_T, \bar{\bQ}_T):=\max_{\bQ \in \cP} f(\bar x_T, \bQ) - \min_{x\in \cX} f(x, \bar\bQ_T).
\]
According to \cite[Lemma 12.11]{orabona2019modern}, $\mathrm{Gap}(\bar{x}_T, \bar{\bQ}_T)\leq \zeta$ implies that $(\bar{x}_T, \bar{\bQ}_T)$ is a $\zeta$-saddle point. Therefore, it suffices to control the duality gap of the computed solution $(\bar{x}_T, \bar{\bQ}_T)$.

To rigorously bound this gap without tying our analysis to specific primal and dual updates, we make two general assumptions. The first assumes access to an oracle that computes an approximate solution to the worst-case expectation problem, corresponding to the dual update. The second assumes access to an online algorithm for the primal update that satisfies a suitable regret guarantee.

\begin{assumption}[Worst-case Expectation Oracle]\label{asp::worst-case-oracle}
    Let $\delta\geq 0$. There exists a worst-case expectation oracle that, for every $x_t$, returns a distribution $\bQ_t\in \cP$ that satisfies
    \begin{align*}
        \mathbb{E}_{z \sim \bQ_t} \left[\ell(x_t,z) \right]\geq \max_{\bQ\in \cP}\left\{\mathbb{E}_{z \sim \bQ} \left[\ell(x_t,z) \right]\right\}-\delta.
    \end{align*}
\end{assumption}
Algorithm~\ref{alg::master_eval} (or its enhanced variant in Section \ref{subsec::support_compression}) is specifically designed to satisfy the conditions of Assumption~\ref{asp::worst-case-oracle}. Consequently, it serves as an ideal choice for the worst-case expectation oracle.

\begin{assumption}[No-Regret Guarantee]
\label{asp::regret}
The sequence of decisions $\{x_t\}_{t=1}^T$ generated by the online algorithm $\mathcal{A}$ against the sequence of adversarial loss functions $\{f(\cdot, \bQ_t)\}_{t=1}^T$ achieves a sublinear regret $\cR_T$. That is, there exists a bound $\cR_T = o(T)$ such that:
\begin{align*}
    \sum_{t=1}^{T} f(x_t, \bQ_t) - \min_{x\in \cX} \sum_{t=1}^T f(x, \bQ_t) \leq \cR_T.
\end{align*}
\end{assumption}

A wide variety of online algorithms satisfy Assumption~\ref{asp::regret} under standard geometric and regularity conditions \cite{hazan2016introduction,orabona2019modern,shalev2025online}. The classical Projected Online Subgradient Method (POSM) achieves minimax optimal regret of $\cR_T = O(\sqrt{T})$ generally, which improves to $\cR_T = O(\log T)$ for strongly convex objectives \cite{zinkevich2003online,hazan2007logarithmic,abernethy2008optimal}. When Euclidean projections are computationally prohibitive, Online Mirror Descent (OMD) and Follow-the-Regularized-Leader (FTRL) offer geometry-adaptive Bregman projections with comparable regret rates \cite{beck2003mirror,shalev2025online}. Alternatively, for highly structured domains, projection-free Online Frank--Wolfe algorithms substitute projections with cheaper linear optimization oracles, attaining sublinear regret bounds of $\cR_T = O(T^{3/4})$ or $\cR_T = O(T^{2/3})$ depending on the smoothness of the loss \cite{hazan2012projection,hazan2020faster}.

With Assumptions~\ref{asp::worst-case-oracle} and~\ref{asp::regret} in place, we are now ready to establish the convergence of Algorithm~\ref{alg::1}.~The proof is based on decomposing the duality gap into dual and primal regret terms. Since our algorithm computes an approximate worst-case distribution at every iteration, the dual regret vanishes up to the computational error $\zeta$ introduced by the worst-case oracle. Consequently, the final duality gap is governed entirely by the primal regret of the online algorithm $\cA$ and the oracle error $\zeta$.

\begin{theorem}
\label{thm::convergence_primal}
Under Assumptions~\ref{asp::regularity},~\ref{asp::worst-case-oracle}, and~\ref{asp::regret}, Algorithm~\ref{alg::1} outputs a solution $(\bar{x}_T, \bar{\bQ}_T)\in \cX\times \cP$ satisfying
\[
\mathrm{Gap}(\bar{x}_T, \bar{\bQ}_T) 
:= \max_{\bQ \in \cP} f(\bar x_T, \bQ) - \min_{x\in \cX} f(x, \bar\bQ_T) 
\leq \frac{\cR_T}{T}+\delta,
\]
where $\cR_T$ is the regret of the chosen online algorithm $\mathcal{A}$, and $\delta$ is the error of the worst-case expectation oracle.
\end{theorem}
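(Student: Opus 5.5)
The argument is the standard best-response analysis for convex-concave games, adapted to the distributional adversary. The two properties of $f$ it needs are these. For fixed $x$, the map $\bQ\mapsto f(x,\bQ)=\EE_{z\sim\bQ}[\ell(x,z)]$ is linear in $\bQ$. For fixed $\bQ$, the map $x\mapsto f(x,\bQ)$ is convex, because each $\ell_k(\cdot,z)$ is convex and so is their pointwise maximum (Assumption~\ref{asp::regularity}\ref{asp::regular::ell}).

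I would first record feasibility. Since $\cP$ is convex (the map $\bQ\mapsto\OT(\bQ,\hat\bP)$ is convex, as a mixture of couplings is a coupling of the mixtures), the running average $\bar\bQ_T=\frac1T\sum_t\bQ_t$ lies in $\cP$. Likewise $\bar x_T\in\cX$ by convexity of $\cX$. I would also note that all quantities are finite, as guaranteed by Lemma~\ref{lem::existence_saddle} and the growth conditions.

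Next I bound the two terms of the gap separately. For the primal term, fix any $\bQ\in\cP$. Jensen's inequality in $x$ gives $f(\bar x_T,\bQ)\le\frac1T\sum_t f(x_t,\bQ)$. By Assumption~\ref{asp::worst-case-oracle}, each summand satisfies $f(x_t,\bQ)\le\max_{\bQ'\in\cP}f(x_t,\bQ')\le f(x_t,\bQ_t)+\delta$. Taking the maximum over $\bQ$ yields
\[
\max_{\bQ\in\cP}f(\bar x_T,\bQ)\le\frac1T\sum_{t=1}^T f(x_t,\bQ_t)+\delta .
\]
For the dual term, linearity in $\bQ$ gives $f(x,\bar\bQ_T)=\frac1T\sum_t f(x,\bQ_t)$ for every $x$. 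Hence
\[
\min_{x\in\cX}f(x,\bar\bQ_T)=\frac1T\min_{x\in\cX}\sum_{t=1}^T f(x,\bQ_t)\ge\frac1T\sum_{t=1}^T f(x_t,\bQ_t)-\frac{\cR_T}{T},
\]
where the inequality is Assumption~\ref{asp::regret}.

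Subtracting the second bound from the first cancels $\frac1T\sum_t f(x_t,\bQ_t)$ and gives $\mathrm{Gap}\le\cR_T/T+\delta$.

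The main obstacle is making these manipulations rigorous over measures rather than finite vectors. The identity $f(x,\bar\bQ_T)=\frac1T\sum_t f(x,\bQ_t)$ needs the expectations to be well defined and finite, not of the form $\infty-\infty$. The exchange in $\min_x f(x,\bar\bQ_T)$ needs the minimum to be attained, or else it should be read as an infimum. I would handle this with the growth bounds in Assumption~\ref{asp::regularity}\ref{asp::regular::c}: each $\ell(x,\cdot)$ is bounded above by an affine or sublinear function, and the moments of every $\bQ\in\cP$ are controlled by $\rho$ through $c\ge\|\cdot\|^p$. Together these make every $f(x,\bQ)$ finite. The rest is bookkeeping.
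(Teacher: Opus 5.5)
Your proposal is correct and follows the paper's proof. Both split the gap around $\frac{1}{T}\sum_{t=1}^T f(x_t,\bQ_t)$, bound the first part by convexity of $f(\cdot,\bQ)$, exchanging the sum with the maximum over $\cP$, and the $\delta$-optimality of each $\bQ_t$, and bound the second part by linearity of $f(x,\cdot)$ and the regret bound.

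Your remarks on $\bar\bQ_T\in\cP$ by convexity of $\cP$ and on well-definedness of the expectations supply details the paper leaves implicit. One correction: the upper growth bound makes each $f(x,\bQ)$ well defined in $[-\infty,\infty)$ rather than finite, because concavity in $z$ gives no lower bound. That weaker property is all the linearity step needs.
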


\begin{proof}
The duality gap of $(\bar{x}_T, \bar{\bQ}_T)$ can be written as
\begin{align*}
& \max_{\bQ \in \cP} f(\bar x_T, \bQ) - \min_{x\in \cX} f(x, \bar\bQ_T) \\
=& \left\{ \max_{\bQ \in \cP} f(\bar x_T, \bQ) - \frac{1}{T}\sum_{t=1}^{T} f(x_t, \bQ_t) \right\}
+ \left\{ \frac{1}{T}\sum_{t=1}^{T} f(x_t, \bQ_t) - \min_{x\in \cX} f(x, \bar\bQ_T) \right\}\\
\stackrel{(a)}{\leq}& \left\{ \max_{\bQ \in \cP}\frac{1}{T} \sum_{t=1}^{T} f(x_t, \bQ) - \frac{1}{T}\sum_{t=1}^{T} f(x_t, \bQ_t) \right\}
+ \left\{ \frac{1}{T}\sum_{t=1}^{T} f(x_t, \bQ_t) - \min_{x\in \cX} \frac{1}{T}\sum_{t=1}^{T} f(x, \bQ_t) \right\}\\
\stackrel{(b)}{\leq}& \left\{ \frac{1}{T} \sum_{t=1}^{T} \max_{\bQ \in \cP} f(x_t, \bQ) - \frac{1}{T}\sum_{t=1}^{T} f(x_t, \bQ_t) \right\}
+ \left\{ \frac{1}{T}\sum_{t=1}^{T} f(x_t, \bQ_t) - \min_{x\in \cX} \frac{1}{T}\sum_{t=1}^{T} f(x, \bQ_t) \right\}\\
\stackrel{(c)}{=}& \frac{1}{T}\sum_{t=1}^{T} \left(f(x_t, \bQ_t)+\delta\right) - \min_{x\in \cX} \frac{1}{T}\sum_{t=1}^{T} f(x, \bQ_t) \\
\stackrel{(d)}{\leq}& \frac{\cR_T}{T}+\delta,
\end{align*}
where $(a)$ is due to the convexity of $f(\cdot, \bQ)$ and the linearity of $f(x, \cdot)$; $(b)$ is obtained by swapping the order of the summation $\sum_{t=1}^{T}$ and the maximization $\max_{\bQ \in \cP}$; $(c)$ is obtained by realizing that $f(x_t, \bQ_t) \geq \max_{\bQ \in \cP} f(x_t, \bQ)-\delta$ due to Assumption~\ref{asp::worst-case-oracle}; $(d)$ directly follows from Assumption~\ref{asp::regret}.
\end{proof}

As a special case, when the proposed budget-allocation-based method and vanilla POSM are used as the dual and primal oracles, under appropriate choices of parameters, Algorithm~\ref{alg::1} finds a $\zeta$-saddle point in $O(1/\zeta^2)$ iterations.


\section{Dual DRO Problem}
\label{sec::dual_dro}

In this section, we investigate the structural properties of the dual DRO problem under the optimal transport ambiguity set defined in~\eqref{eq::ambiguity_set}. Resolving this formulation requires identifying a least-favorable distribution that solves the infinite-dimensional maximin problem:
\begin{align}
\label{eq::dual_DRO}
\max_{\bQ \in \cP} \min_{x \in \cX} \, \EE_{z \sim \bQ} [\ell(x, z)].
\end{align}
Characterizing this least-favorable distribution is fundamentally more challenging than computing a worst-case distribution for a fixed primal decision $x$ as in~\eqref{eq::worst_case}. By the strong duality guaranteed in Lemma~\ref{lem::existence_saddle}, our distributional best-response framework (Algorithm~\ref{alg::1}) naturally produces an approximate least-favorable distribution $\bar \bQ_T$. Nonetheless, since the algorithm relies on a running average of intermediate worst-case measures $\bQ_t$, the support of the returned distribution grows linearly with the number of iterations, resulting in up to $T(N+1)$ atoms. Although this procedure is algorithmically efficient, the massive support size of its output renders downstream evaluation, storage, or subsequent re-optimization computationally prohibitive.

This raises a natural question: can one analytically recover a least-favorable distribution with significantly fewer atoms? For convex-piecewise concave losses with $K$ pieces, existing literature establishes that the dual DRO problem admits a least-favorable distribution supported on at most $KN$ atoms \cite[Theorem~2]{shafiee2025nash}. However, we show that this construction is generally not tight. Specifically, by carefully analyzing the dual equilibrium conditions, we prove the existence of a least-favorable distribution supported on at most $\min\{N+n+1, KN\}$ points. Whenever the primal decision dimension is moderate ($n < (K-1)N - 1$), this represents a strict reduction in support size. 

Under Assumption~\ref{asp::regularity}, the following result demonstrates that the dual problem admits a finite-dimensional nonconvex reformulation that rigorously exposes this minimal support structure.

\begin{proposition}
\label{prop::dual-nonconvex-equivalence}
    Under Assumption~\ref{asp::regularity}, the dual DRO problem~\eqref{eq::dual_DRO} is equivalent to the nonconvex program
    \begin{equation}
    \label{eq::dual_DRO_reformulation}
        \left\{
        \begin{array}{cll}
            \max & \displaystyle - \frac{1}{N}\sum_{i=1}^{N} \sum_{k=1}^{K} \alpha_{ik} \ell_k^{*1}\left( y_{ik}, \hat{z}_i + v_{ik} \right) - \nu \sigma_{\cX}\left(\theta \right) \\[1.0ex]
            \mathrm{s.t.} & \theta \in \RR^n, \ \nu \in \RR_+, \ \alpha_{ik} \in \bR_+, \ v_{ik} \in \bR^m,\ y_{ik} \in \RR^n ,\ \hat{z}_i + v_{ik} \in \cZ \ \forall i \in [N], \forall k \in [K] \\
            & \displaystyle \frac{1}{N} \sum_{i=1}^{N} \sum_{k=1}^{K} \alpha_{ik} y_{ik}  + \nu \theta = 0, \ \frac{1}{N} \sum_{i=1}^N \sum_{k=1}^K \alpha_{ik} c(\hat{z}_i + v_{ik}, \hat{z}_i) \leq \rho, \  \sum_{k=1}^K \alpha_{ik} = 1 \ \forall i\in [N] \\
            &  
        \end{array}
        \right.
    \end{equation}
    where $\ell_k^{*1}(w,z)$ denotes the conjugate of $\ell_k(x,z)$ with respect to its first argument $x$ for fixed $z$, and $\sigma_{\cX}(\cdot)$ is the support function over $\cX$. Given an optimal solution $\left( \{\alpha_{ik}^\star, v_{ik}^\star, y_{ik}^\star\}, \theta^\star, \nu^\star \right)$ to~\eqref{eq::dual_DRO_reformulation}, the discrete distribution
    \begin{align}
    \label{eq::dual-DRO-Q}
    \QQ^\star_{\mathrm{dual}} := \frac{1}{N} \sum_{i=1}^N \sum_{k=1}^K \alpha_{ik}^\star \delta_{\hat{z}_i + v_{ik}^\star}
    \end{align}
    belongs to the ambiguity set $\cP$ and attains the maximum in \eqref{eq::dual_DRO}.
    Furthermore, there exists an optimal solution in which at most $\min\{N+n+1, KN\}$ of the weights $\{\alpha_{ik}^\star\}$ are nonzero. Consequently, there exists a least-favorable distribution $\QQ^\star_{\mathrm{dual}}$ of the dual problem~\eqref{eq::dual_DRO} supported on at most $\min\{N+n+1, KN\}$ points.
\end{proposition}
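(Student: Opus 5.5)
First, I reduce the least-favorable search to discrete distributions with the same atom structure as Proposition~\ref{prop::nonconvex-equivalence}. Fix any $\bQ\in\cP$ and an optimal coupling with $\hat\bP$. As in that proof, disintegrate into $\bQ_i$, partition each $\bQ_i$ into the regions $\cR_k$ where $\ell=\ell_k$, and write $\alpha_{ik}=\bQ_i(\cR_k)$. Replace each conditional $\bQ_{ik}$ by a Dirac at its mean $\hat z_i+v_{ik}$; convexity of $c(\cdot,\hat z_i)$ keeps the new distribution in $\cP$. For every $x$, concavity of $\ell_k(x,\cdot)$ gives $\EE_{\bQ}[\ell(x,z)]\le \frac1N\sum_{i,k}\alpha_{ik}\ell_k(x,\hat z_i+v_{ik})\le \EE_{\bQ'}[\ell(x,z)]$, where $\bQ'$ is the new discrete distribution and the last step uses $\ell\ge\ell_k$. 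Taking $\min_x$ shows the dual value is unchanged if we restrict to distributions of the form \eqref{eq::dual-DRO-Q}, with inner objective $\min_{x\in\cX}\frac1N\sum_{i,k}\alpha_{ik}\ell_k(x,\hat z_i+v_{ik})$. Attainment follows from Lemma~\ref{lem::existence_saddle}, applied with the same coercivity argument as in Proposition~\ref{prop::nonconvex-equivalence} so that the $v_{ik}$ stay bounded.

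Next I dualize the inner minimization over $x$. For fixed $(\alpha,v)$, the function $x\mapsto\frac1N\sum\alpha_{ik}\ell_k(x,\hat z_i+v_{ik})+\delta_\cX(x)$ is a sum of convex functions. Fenchel--Rockafellar duality, i.e.\ infimal convolution of conjugates (the $\ell_k$ are real-valued, so a qualification condition holds), gives
\[
\min_{x\in\cX}\sum_{i,k}\tfrac{\alpha_{ik}}{N}\ell_k(x,\cdot)=\max\Big\{-\sum_{i,k}\tfrac{\alpha_{ik}}{N}\ell_k^{*1}(y_{ik},\cdot)-\sigma_\cX(w):\tfrac1N\textstyle\sum\alpha_{ik}y_{ik}+w=0\Big\}.
\]
Here the conjugate of $\alpha\ell_k$ is $\alpha\ell_k^{*1}(y/\alpha)$, which I rescale to $\alpha\ell_k^{*1}(y)$. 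Writing $w=\nu\theta$ with $\nu\ge0$, positive homogeneity of $\sigma_\cX$ gives $\sigma_\cX(w)=\nu\sigma_\cX(\theta)$, which yields \eqref{eq::dual_DRO_reformulation}. Dual attainment needs care because $\cX$ may be unbounded. I would handle it through the inf-compactness in Assumption~\ref{asp::regularity}\ref{asp::regular::sets}, or fall back to a sup and argue attainment at the optimal $(\alpha^\star,v^\star)$ via the saddle point.

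For the support bound, the $KN$ part is immediate. For $N+n+1$, I fix an optimal solution and freeze $v^\star,y^\star,\theta^\star,\nu^\star$. The problem in $\alpha$ then becomes an LP: the objective is linear in $\alpha$ plus a constant, and the constraints are the $n$ equalities $\frac1N\sum\alpha_{ik}y^\star_{ik}=-\nu^\star\theta^\star$, the budget inequality, and the $N$ normalizations, with $\alpha\ge0$. That is $N+n+1$ constraints, and $\alpha^\star$ is feasible and optimal (otherwise joint optimality fails). A BFS therefore has at most $N+n+1$ nonzeros, by \cite[Theorem~2.7]{bertsimas1997introduction}. Boundedness of the LP holds because the feasible set is contained in a product of simplices. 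Any BFS remains jointly optimal, and the induced distribution lies in $\cP$ by the budget constraint. I need it to attain \eqref{eq::dual_DRO}; this holds because its inner min is at least the reformulated objective by weak Fenchel duality.

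The main obstacle is the conjugate step. This includes justifying strong duality with attainment in $(y,\nu,\theta)$ when $\cX$ is unbounded, and correctly handling $\alpha_{ik}=0$ via perspective conventions so that freezing $y^\star$ keeps the LP objective linear. I would first try to obtain attainment from the existence of a saddle point $x^\star$, taking $y_{ik}\in\partial_x\ell_k(x^\star,\hat z_i+v^\star_{ik})$ and $\nu\theta$ in the normal cone of $\cX$ at $x^\star$.
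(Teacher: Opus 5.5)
Your reduction to discrete distributions has a genuine gap. In the dual problem the pieces are $\ell_k(x,\cdot)$, so the regions $\cR_k$ on which $\ell(x,\cdot)=\ell_k(x,\cdot)$ move with $x$. Once you fix a partition, you also fix $\alpha_{ik}$, $v_{ik}$ and $\bQ'$, and for a general $x$ you only know $\ell(x,z)\ge\ell_k(x,z)$ on $\cR_k$. So the first link of your chain, $\EE_{\bQ}[\ell(x,z)]\le\frac1N\sum_{i,k}\alpha_{ik}\ell_k(x,\hat z_i+v_{ik})$ for every $x$, does not follow. If you instead let the partition vary with $x$, the right-hand side is no longer a fixed function of $x$, and you cannot take $\min_x$ of both sides.

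This is not a technicality. Take $N=1$, $\cX=[-1,1]$, $\ell_1(x,z)=x$, $\ell_2(x,z)=-x$ and $\bQ=\delta_{\hat z_1}$. Then $\min_{x\in\cX}\EE_\bQ[\ell(x,z)]=\min_{x\in\cX}|x|=0$. However, every partition of $\cZ$ yields $\alpha\in\{(1,0),(0,1)\}$, and then $\min_{x\in\cX}(\alpha_1-\alpha_2)x=-1$; at the minimizing $x=\pm1$ your inequality reads $1\le-1$. Only the fractional split $\alpha=(1/2,1/2)$ preserves the value. So the direction ``optimal value of \eqref{eq::dual_DRO} $\le$ optimal value of \eqref{eq::dual_DRO_reformulation}'' is left unproved. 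Your second link, based on $\ell\ge\ell_k$, only gives the reverse inequality.

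The missing idea is to interchange $\min_x$ with the assignment of mass to pieces, which makes that assignment independent of $x$ and allows it to be fractional. The paper does this in four steps:
\begin{enumerate}
\item It writes $\EE_{\bQ_i}[\max_k\ell_k(x,z)]=\max_{s_i}\EE_{\bQ_i}[\sum_k s_{ik}(z)\ell_k(x,z)]$ over measurable $s_i:\cZ\to\Delta_K$ (interchangeability principle).
\item It applies Sion's theorem to swap $\min_x$ and $\max_{s_i}$.
\item Only then does it set $\alpha_{ik}=\EE_{\bQ_i}[s_{ik}]$ and $\bQ_{ik}=s_{ik}\bQ_i/\alpha_{ik}$.
\item Finally it applies Jensen.
\end{enumerate}
With $s_i$ fixed, your first link holds once $\EE_\bQ[\ell(x,z)]$ is replaced by $\frac1N\sum_i\EE_{\bQ_i}[\sum_k s_{ik}(z)\ell_k(x,z)]$, whose minimum over $x$ equals $\min_x\EE_\bQ[\ell(x,z)]$ by the swap.

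You could instead stay finite-dimensional. After the substitution $q_{ik}=\alpha_{ik}v_{ik}$, the objective $\frac1N\sum_{i,k}\alpha_{ik}\ell_k(x,\hat z_i+q_{ik}/\alpha_{ik})$ is concave in $(\alpha,q)$ on a compact convex set (modulo the usual perspective conventions at $\alpha_{ik}=0$) and convex in $x$. A minimax theorem then gives $\max_{\alpha,q}\min_x=\min_x\max_{\alpha,q}$. The right-hand side equals $\min_x\max_{\bQ\in\cP}\EE_\bQ[\ell(x,z)]$ by Proposition~\ref{prop::nonconvex-equivalence}, where $x$ is fixed and your partition argument is legitimate. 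By Lemma~\ref{lem::existence_saddle} this is the dual value.

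Your remaining steps coincide with the paper's: Fenchel duality with the rescaling of $\alpha_{ik}\ell_k$, the substitution $w=\nu\theta$, and the frozen-variable LP with $N+n+1$ rows. Your worry about attainment in $(y,\nu,\theta)$ is easy to settle. Every $\ell_k(\cdot,z)$ is finite on $\RR^n$, so the relative-interior qualification holds, and the conjugate-side optimum is attained whenever the inner minimum over $x$ is finite.
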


\begin{proof}
Consider any coupling $\gamma \in \Gamma(\bQ, \hat{\bP})$. Since the empirical distribution is $\hat{\bP} = \frac{1}{N} \sum_{i=1}^N \delta_{\hat{z}_i}$, we can disintegrate $\gamma$ into conditional probability distributions $\bQ_i \in \cP(\cZ)$ associated with each sample~$\hat{z}_i$. The dual DRO problem \eqref{eq::dual_DRO} can be thus reformulated as:
\begin{align*}
    \max_{\bQ_i \in \cP(\cZ)} \left\{ \min_{x \in \cX} ~  \frac{1}{N} \sum_{i=1}^N \mathbb{E}_{z \sim \bQ_i} \left[\max_{k \in [K]} \ell_k(x, z) \right] : \frac{1}{N} \sum_{i=1}^N \mathbb{E}_{z \sim \bQ_i}[c(z, \hat{z}_i)] \leq \rho \right\}.
\end{align*}

To resolve the pointwise maximum inside the expectation, we express it as a continuous maximization over the probability simplex $\Delta_K \subset \RR^K$. By the interchangeability principle for integration and maximization \cite[Theorem~14.60]{rockafellar1998variational}, we can pull this supremum outside the expectation by introducing measurable mappings $s_i : \cZ \to \Delta_K$. This enables to rewrite the DRO dual problem as:
$$
\max_{\bQ_i \in \cP(\cZ)} \, \min_{x \in \cX}\, \max_{s_i: \cZ \to \Delta_K}
\, \left\{ \frac{1}{N} \sum_{i=1}^N \sum_{k=1}^{K} \mathbb{E}_{z \sim \bQ_{i}}\left[s_{ik}(z) \ell_k(x, z)\right] : \frac{1}{N} \sum_{i=1}^N \sum_{k=1}^{K} \mathbb{E}_{z \sim \bQ_i}[s_{ik}(z) c(z, \hat{z}_i)] \leq \rho \right\},
$$
where $s_{ik}(\cdot)$ denotes the $k$-th coordinate of $s_i(\cdot)$. For any fixed $\bQ_i$, we apply Sion's Minimax Theorem~\cite{sion1958general} to interchange the operations $\min_{x}$ and $\max_{s_i}$. Notice that $\Delta_K$ is a compact and convex subset of $\RR^K$. By \cite[Theorem 2]{diestel1977remarks}, the space of measurable mappings $s_i: \cZ \to \Delta_K$ is weakly compact. Since the objective is convex in $x \in \cX$, and linear in $s_i$ with respect to this weak topology, the requirements of Sion's theorem are satisfied and the minimax equality holds. Interchanging the order yields:
$$
\max_{\bQ_i \in \cP(\cZ)} \, \max_{s_i: \cZ \to \Delta_K}
\, \min_{x \in \cX}\, \left\{ \frac{1}{N} \sum_{i=1}^N \sum_{k=1}^{K} \mathbb{E}_{z \sim \bQ_{i}}\left[s_{ik}(z) \ell_k(x, z)\right] : \frac{1}{N} \sum_{i=1}^N \sum_{k=1}^{K} \mathbb{E}_{z \sim \bQ_i}[s_{ik}(z) c(z, \hat{z}_i)] \leq \rho \right\}.
$$
We can now group the maximization over $\bQ_i$ and $s_i$ into a unified choice of probability weights and conditional distributions. Let $\alpha_{ik} = \mathbb{E}_{z \sim \bQ_i}[s_{ik}(z)]$ denote the overall probability mass assigned to component $k$, satisfying $\sum_{k=1}^K \alpha_{ik} = 1$. Let the conditional probability measure be defined as $\bQ_{ik}(dz) = \frac{s_{ik}(z)}{\alpha_{ik}} \bQ_i(dz)$ when $\alpha_{ik} > 0$ (and as an arbitrary valid distribution when $\alpha_{ik} = 0$). The problem is equivalent to:
$$ \max_{\bQ_{ik} \in \cP(\cZ)} \max_{\alpha_{i} \in \Delta_K} \min_{x \in \cX} \left\{ \frac{1}{N} \sum_{i=1}^N \sum_{k=1}^K \alpha_{ik} \EE_{z \sim \bQ_{ik}} \left[\ell_k(x, z)\right] : \frac{1}{N} \sum_{i=1}^N \sum_{k=1}^{K} \alpha_{ik} \mathbb{E}_{z \sim \bQ_{ik}}[c(z, \hat{z}_i)] \leq \rho \right\}. $$
For any fixed conditional distribution $\bQ_{ik}$, let $\tilde{z}_{ik} = \EE_{z \sim \bQ_{ik}}[z]$ be its conditional mean. Because the loss component $\ell_k(x, z)$ is concave in $z$, Jensen's inequality guarantees that $\EE_{z \sim \bQ_{ik}} [\ell_k(x, z)] \le \ell_k(x, \tilde{z}_{ik})$ for \emph{every} fixed $x \in \cX$. Consequently, taking the minimum over $x$ on both sides preserves the inequality:
$$ \min_{x \in \cX} \left\{ \frac{1}{N} \sum_{i=1}^N \sum_{k=1}^K \alpha_{ik} \EE_{z \sim \bQ_{ik}} [\ell_k(x, z)] \right\} \le \min_{x \in \cX} \left\{ \frac{1}{N} \sum_{i=1}^N \sum_{k=1}^K \alpha_{ik} \ell_k(x, \tilde{z}_{ik}) \right\}. $$
Furthermore, the convex transportation cost satisfies $c(\tilde{z}_{ik}, \hat{z}_i) \le \EE_{z \sim \bQ_{ik}}[c(z, \hat{z}_i)]$. Therefore, replacing any arbitrary conditional distribution $\bQ_{ik}$ with a Dirac measure $\delta_{\tilde{z}_{ik}}$ placed at its mean strictly improves or maintains the outer maximization objective while requiring less or equal transport budget. Applying the change of variables $v_{ik} = \tilde{z}_{ik} - \hat{z}_i$ safely restricts the adversary's search space to finite-dimensional spatial perturbations without altering the problem's optimal value.

With the distributions collapsed to Dirac measures, we arrive at:
$$
\max \left\{ \min_{x \in \RR^n} \frac{1}{N} \sum_{i=1}^N \sum_{k=1}^K \alpha_{ik} \ell_k(x, \hat{z}_i + v_{ik}) + \delta_{\cX}(x) : 
\begin{array}{l}
    \alpha_i \in \Delta_K, \ v_{ik} \in \bR^m, \ \hat z_i + v_{ik} \in \cZ \ \forall i \in [N] \\ \displaystyle \frac{1}{N} \sum_{i=1}^N \sum_{k=1}^{K} \alpha_{ik} \mathbb{E}_{z \sim \bQ_{ik}}[c(\hat{z}_i + v_{ik}, \hat{z}_i)] \leq \rho 
\end{array}
\right\},
$$
where $\delta_{\cX}(\cdot)$ is the indicator function for the domain $\cX$. We dualize this inner minimization using classical Fenchel duality for a sum of convex functions. Introducing dual variables $\lambda_{ik} \in \RR^n$ for each loss component and a global dual variable $\lambda_0 \in \RR^n$ for the indicator $\delta_{\cX}$, we apply the variable substitutions $\lambda_{ik} = \frac{\alpha_{ik}}{N} y_{ik}$ and $\lambda_0 = \nu \theta$ (with $\nu \ge 0$). This yields the dual conjugates $\frac{\alpha_{ik}}{N}\ell_k^{*1}(y_{ik}, \hat{z}_i + v_{ik})$ and $\nu \sigma_{\cX}(\theta)$. The Fenchel stationarity condition $\sum_{i, k} \lambda_{ik} + \lambda_0 = 0$ produces the equilibrium constraint $\frac{1}{N}\sum_{i,k} \alpha_{ik}y_{ik} + \nu\theta = 0$. 
Substituting this dual maximization back into the problem yields the reformulation~\eqref{eq::dual_DRO_reformulation} and constructs the least-favorable distribution~\eqref{eq::dual-DRO-Q}. By the same coercivity and compactness arguments utilized in Proposition~\ref{prop::nonconvex-equivalence}, this joint maximization attains a finite optimum.

To establish the minimal support bound, we use the property that joint maximization is equivalent to sequential maximization. By fixing all continuous parameters to their optimal values $\{v_{ik}^\star, y_{ik}^\star, \theta^\star, \nu^\star\}$, the remaining optimization over the probability weights $\{\alpha_{ik}\}$ reduces to an LP:
\begin{equation*}
\left\{\!\!
\begin{array}{cl}
    \max & \displaystyle  - \frac{1}{N}\sum_{i=1}^{N} \sum_{k=1}^{K} \alpha_{ik} \ell_k^{*1}\left( y_{ik}^\star, \hat{z}_i + v_{ik}^\star \right) - \nu^\star \sigma_{\cX}\left(\theta^\star \right)  \\
    \mathrm{s.t.} & \displaystyle 
    \alpha \in \bR_+^{N \times K}, 
    \frac{1}{N} \sum_{i=1}^{N} \sum_{k=1}^{K} \alpha_{ik} y_{ik}^\star \!+\! \nu^\star \theta^\star \!=\! 0, 
    \frac{1}{N} \sum_{i=1}^N \sum_{k=1}^K \alpha_{ik} c(\hat{z}_i \!+\! v_{ik}^\star, \hat{z}_i) \!\leq\! \rho, 
    \sum_{k=1}^K \alpha_{ik} \!=\! 1, \, \forall i \in [N]  
\end{array}
\right.
\end{equation*}
This LP possesses $KN$ nonnegative variables and exactly $N + n + 1$ structural constraints: $N$ normalization equalities, $1$ aggregate budget inequality, and $n$ linear equalities governing the subgradient equilibrium for $x^\star$. By the fundamental theorem of linear programming, the optimal value is attained at a basic feasible solution (BFS) with at most as many strictly positive variables as there are structural constraints. Thus, at most $\min\{N + n + 1, KN\}$ weights $\alpha_{ik}^\star$ can be strictly positive, limiting the support of the resulting worst-case distribution $\QQ^\star_{\mathrm{dual}}$ to at most $\min\{N + n + 1, KN\}$ unique atoms.
\end{proof}

Proposition~\ref{prop::dual-nonconvex-equivalence} establishes the existence of a least-favorable distribution supported on at most $\min\{N + n + 1, KN\}$ atoms for the dual DRO problem. Unlike its primal counterpart, the existence of such a sparse least-favorable distribution was previously unknown. One might speculate that this bound could be improved, particularly in light of the primal setting, where worst-case distributions supported on at most $N+1$ atoms are known to exist. The following lemma confirms that this bound is in fact tight.

\begin{lemma}
\label{lem::N+n+1_lower_bound}
For any $n\geq 1$ and $N\geq 3$,
there exists an instance of the dual DRO problem~\eqref{eq::dual_DRO} satisfying Assumption~\ref{asp::regularity} such that any optimal least-favorable distribution $\QQ^\star_{\mathrm{dual}}$ requires a support of at least $\min\{N + n + 1, KN\}$ distinct atoms.
\end{lemma}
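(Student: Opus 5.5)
The plan is to build an explicit instance and use the saddle-point characterization from Lemma~\ref{lem::existence_saddle}. A distribution $\QQ^\star$ is least favorable exactly when there is some $x^\star\in\cX$ for which $(x^\star,\QQ^\star)$ is a saddle point. In that case two things hold simultaneously:
\begin{enumerate}[label=(\alph*)]
\item $\QQ^\star$ solves the worst-case expectation problem~\eqref{eq::worst_case} at $x^\star$;
\item $x^\star$ minimizes $\EE_{z\sim\QQ^\star}[\ell(x,z)]$ over $\cX$, i.e.\ $0\in \EE_{z\sim\QQ^\star}[\partial_x \ell(x^\star,z)]+N_{\cX}(x^\star)$.
\end{enumerate}
The instance will be chosen so that (b) imposes $n$ independent linear equations on the masses $\alpha_{ik}$. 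These are the same $n$ equilibrium equations that appear in the LP at the end of the proof of Proposition~\ref{prop::dual-nonconvex-equivalence}. The aim is to show they cannot be met without $n$ extra atoms beyond the $N+1$ allowed in the primal setting.

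\textbf{Construction.} I would take $\cX=\RR^n$, a scalar uncertainty $\cZ=\RR$ (or a compact interval, so that Assumption~\ref{asp::regularity}\ref{asp::regular::c} is automatic), and $c(z,\hat z)=|z-\hat z|^2$. The loss pieces would have the form
\[
\ell_k(x,z)=\tfrac12\|x\|^2+\langle a_k,x\rangle+g_k(z),
\]
with concave $g_k$ and vectors $a_k\in\RR^n$. The quadratic term makes the inner minimization strictly convex, which forces a unique optimal decision for every $\QQ$. It also makes the saddle-point $x^\star$ unique: the dual objective $\min_x\EE_{\QQ}[\ell(x,z)]$ is concave in $\QQ$, and its strictly convex structure in $x$ pins $x^\star$ down.

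I would choose the data symmetrically: samples $\hat z_i$ spread out, and $g_k$ arranged so that at $x^\star$ each sample has two tied pieces. For $n$ of the samples, the choice of the $a_k$ would produce a three-way tie, with the $a_k$ in general position.

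Condition (b) then reads $x^\star=-\frac1N\sum_{i,k}\alpha_{ik}a_k$. Condition (a) forces, via Proposition~\ref{prop::nonconvex-equivalence} and the KKT conditions for the budget allocation in Theorem~\ref{thm::decomposition}, the atom locations $\hat z_i+v_{ik}$ to be the unique maximizers of $\ell_k(x^\star,\cdot)-\lambda^\star c(\cdot,\hat z_i)$.

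\textbf{Counting atoms.} Once locations are fixed, any least-favorable distribution is a feasible point of the weight LP. I would argue by contradiction: suppose some sample $i$ places positive mass on fewer pieces than claimed. Then the vectors $\{a_k\}$ that receive mass from that sample, combined with the normalization and budget rows, cannot span the affine condition required for $\frac1N\sum\alpha_{ik}a_k=-x^\star$. The general-position choice of the $a_k$, together with the requirement that $x^\star$ is not representable with fewer active pieces, gives the contradiction. This yields at least $N+n+1$ distinct atoms.

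In the regime $KN\le N+n+1$, I would instead take $K$ small and design the instance so that every piece at every sample must carry mass, giving $KN$ atoms. The conditions $n\ge1$ and $N\ge3$ would be used to have enough samples to host the $n$ three-way splits plus the budget-splitting sample.

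\textbf{Main obstacle.} The hardest step is ruling out \emph{all} alternative least-favorable distributions, not only the one I construct. This needs three things: uniqueness of $x^\star$ and $\lambda^\star$; uniqueness of the atom locations for each active piece, which I would get from strict concavity of $g_k$ minus a strictly convex cost; and non-degeneracy, meaning the equilibrium equation $\sum\alpha_{ik}a_k=-Nx^\star$ has no solution with fewer active pieces.

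I would first try to make everything explicit: take $n=1$, place the $a_k$ at distinct points, and verify the count by hand. I would then tensorize across coordinates, using $n$ blocks of samples, to obtain general $n$.
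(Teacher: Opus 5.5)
Your framework is sound and matches the paper's logic. Since $x\mapsto\max_{\QQ\in\cP}\EE_\QQ[\ell(x,z)]$ is strongly convex, the robust decision $x^\star$ is unique. Every least-favorable $\QQ$ therefore forms a saddle point with $x^\star$: it must be a best response at $x^\star$ and must satisfy the $n$ stationarity equations.

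The gap is that you never produce an instance, and the design you sketch cannot cover the range of the lemma. You put the extra splits on $n$ different samples: three-way ties at $n$ samples plus a separate budget-splitting sample, or tensorizing over $n$ blocks of samples. This needs $N\ge n+1$. The lemma, however, must hold for every $n$ with only $N\ge 3$ (e.g.\ $N=3$, $n=10$), and $N\ge3$ does not give you ``enough samples to host the $n$ three-way splits''.

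Tensorizing is also not innocuous, because all blocks share one budget row. If each of $n\ge2$ blocks independently forced two extra atoms, you would get $N+2n>N+n+1$ atoms, contradicting Proposition~\ref{prop::dual-nonconvex-equivalence}.

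The missing idea is to let a \emph{single} sample absorb the entire $(n+2)$-way split while the other $N-1$ samples stay rigid. The paper takes $\hat z_1=0$, the other samples distinct unit vectors summing to zero, $\cX=\cZ$ the unit ball, and pieces $\|x\|_2^2-2x^\top z+g_k(z)$ with affine $g_k$. Then:
\begin{enumerate}
\item The inner minimizer is $x^\star=\EE_\QQ[z]$, so the dual collapses to $\max_{\QQ\in\cP}\{\EE_\QQ[g(z)]-\|\EE_\QQ[z]\|_2^2\}$.
\item A transport-efficiency comparison shows $\EE_\QQ[g]$ is maximal only when the whole budget moves mass from $0$ to the $n+1$ vertices of a regular simplex.
\item Zero mean then forces equal weights on all vertices, giving a unique optimizer with $N+n+1$ atoms.
\end{enumerate}
This also removes a fixed-point problem in your plan. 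You prescribe ties ``at $x^\star$'', but $x^\star$ is determined by the distribution being built; in the paper, symmetry pins $x^\star=0$ up front. Your pieces admit the same collapse, with $-\frac12\|\frac1N\sum_{i,k}\alpha_{ik}a_k\|_2^2$ in place of $-\|\EE_\QQ[z]\|_2^2$, so a symmetric choice of the $a_k$ at one sample could play the same role.

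Your counting step also fails as stated. A per-sample contradiction (``some sample uses fewer pieces than claimed'') cannot hold when every sample has two tied candidates. Proposition~\ref{prop::dual-nonconvex-equivalence} already gives an optimal solution with at most $N+n+1$ positive weights, so for $N>n+1$ some samples are served by a single atom in some optimal solution. The lower bound must be global. One option is exactly $N+n+1$ candidate atoms with a unique, strictly positive solution of the normalization/budget/equilibrium system, as in the paper. Another is nondegeneracy of every vertex of the weight polytope.

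Moreover, positive weights $\alpha_{ik}$ are not distinct atoms. If two pieces are active at one point, that single atom supplies any convex combination of their $a_k$ in the equilibrium equation. You must therefore also rule out coincident maximizers among tied pieces and coincidences across samples. Note too that $K$ is part of the instance, so you may take $K\ge n+2$; the separate $KN$ regime is unnecessary.

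One caveat on the paper's data: as written they need $n\ge2$. For $n=1$ the unit sphere of $\RR$ is $\{\pm1\}$, which coincides with the simplex vertices and violates $M<1$. Taking $\cZ\subset\RR^2$ and coupling $x$ only with $z_1$ repairs this.
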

The proof is provided in Appendix~\ref{app:N+n+1-proof}, and proceeds via an explicit, nontrivial construction of empirical samples and an associated loss function, carefully engineered so that every optimal least-favorable distribution must spread its mass across at least $\min\{N + n + 1,\, KN\}$ atoms. We believe this construction to be of independent interest. By constructing an explicit instance satisfying Assumption~\ref{asp::regularity}, the lemma shows that the coupling between the continuous decision variable $x$ and the adversarial distribution $\bQ$ necessitates exactly $N+n+1$ atoms when $K$ is sufficiently large, confirming that least-favorable distributions with an optimal primal decision are provably denser than worst-case distributions with a fixed primal decision.

\begin{remark}
    Our proposed best-response framework directly computes an approximate saddle-point pair $(x^\star, \QQ^\star_{\mathrm{dual}})$. One might wonder if an alternative two-step procedure is viable: first compute the optimal least-favorable distribution $\QQ^\star_{\mathrm{dual}}$ by solving the dual DRO problem~\eqref{eq::dual_DRO}, and then recover the robust primal decision $x^\star$ by simply solving $\min_{x \in \cX} \EE_{z \sim \QQ^\star_{\mathrm{dual}}}[\ell(x, z)]$. 
    In general, this sequential approach is flawed. While the saddle-point conditions guarantee that any robust primal minimizer must also minimize the expected loss under $\QQ^\star_{\mathrm{dual}}$, the converse is not necessarily true, that is,
    \begin{equation*}
        \argmin_{x \in \cX} \max_{\bQ \in \cP}
        \EE_{z \sim \bQ}[\ell(x, z)]
        \;\subseteq\;
        \argmin_{x \in \cX}
        \EE_{z \sim \QQ^\star_{\mathrm{dual}}}[\ell(x, z)].
    \end{equation*}
    Since this inclusion can be strict, minimizing against $\QQ^\star_{\mathrm{dual}}$ alone may yield solutions that are strictly suboptimal for the overall primal DRO problem. This two-step recovery is only mathematically guaranteed to succeed when the minimizer of $x \mapsto \EE_{z \sim \QQ^\star_{\mathrm{dual}}}[\ell(x, z)]$ is unique, a condition satisfied, for example, if the loss $\ell(\cdot, z)$ is strictly convex on $\cX$ for every $z \in \cZ$. This two-step procedure, together with the additional uniqueness requirement, is the predominant algorithmic framework in the DRO literature \cite{shafieezadeh2018wasserstein,nguyen2023bridging,taskesen2023distributionally,tacskesen2025optimality,sheriff2025nonlinear}. 
\end{remark}

\subsection{Compression to (N+n+1)-Point Least-Favorable Distribution}
\label{subsec::compression_dual}

Proposition~\ref{prop::dual-nonconvex-equivalence} guarantees the existence of a least-favorable distribution supported on at most $\min\{N + n + 1, KN\}$ atoms. However, computing it directly is challenging: existing constructions for $KN$-point distributions \cite[Theorem~2]{shafiee2025nash} rely on expensive large-scale conic reformulations, and achieving the tighter $N+n+1$ bound remains an open problem when $n < (K-1)N - 1$. To bypass these computational hurdles, we design an efficient post-processing step that compresses the time-averaged distribution $\bar{\bQ}_T$ from Algorithm~\ref{alg::1} into a sparse equivalent, without sacrificing the duality gap established in Theorem~\ref{thm::convergence_primal}.

Recall that at each iteration $t$ of Algorithm~\ref{alg::1}, the inner worst-case distribution $\bQ_t$ is computed and subsequently compressed to at most $N+1$ atoms (as detailed in Section~\ref{subsec::support_compression}). While that inner compression is highly efficient (reducing to a Fractional Knapsack Problem solvable via a simple $O(N \log N)$ greedy algorithm), the time-averaged distribution $\bar{\bQ}_T = \frac{1}{T}\sum_{t=1}^T\bQ_t$ still accumulates up to $O(TN)$ atoms over the algorithm's execution. To compress this massive aggregated distribution down to $N+n+1$ points, a greedy strategy is no longer sufficient due to the coupled primal equilibrium constraints. Instead, we must solve a structured convex program.

We achieve this by restricting the dual reformulation~\eqref{eq::dual_DRO_reformulation} so that the adversary can only allocate mass to previously discovered historical atoms. For each $i \in [N]$, let $\cS_i$ denote the finite set of all adversarial spatial locations generated for the empirical sample $\hat{z}_i$ over the entire execution of Algorithm~\ref{alg::1}. Introducing allocation weights $\alpha_{izk} \geq 0$ representing the fraction of probability mass transported from $\hat{z}_i$ to the historical atom $z \in \cS_i$ and evaluated against the $k$-th loss component, we obtain the restricted dual program:

\begin{equation}
\label{eq::restricted_dual_DRO}
\left\{
\begin{array}{cl}
    \max & \displaystyle - \frac{1}{N}\sum_{i=1}^{N} \sum_{z \in \cS_i} \sum_{k=1}^{K} \alpha_{izk} \ell_k^{*1}\left( y_{izk}, z \right) - \nu \sigma_{\cX}\left(\theta \right) \\[3ex]
    \mathrm{s.t.} & \alpha_{izk} \in \bR_+, \ y_{izk} \in \RR^n,\ \theta \in \RR^n, \ \nu \in \RR_+ \\[0.5ex]
    &\displaystyle \frac{1}{N} \sum_{i=1}^{N} \sum_{z \in \cS_i} \sum_{k=1}^{K} \alpha_{izk} y_{izk}  + \nu \theta = 0, \ \frac{1}{N} \sum_{i=1}^N \sum_{z \in \cS_i} \sum_{k=1}^K \alpha_{izk} c(z, \hat{z}_i) \leq \rho, \\
    & \displaystyle \sum_{z \in \cS_i} \sum_{k=1}^K \alpha_{izk} = 1, \, \forall i\in [N]
\end{array}
\right.
\end{equation}

Since the spatial locations $z$ inside the conjugate functions $\ell_k^{*1}$ are now fixed, this restricted formulation is readily convexified. Applying the variable substitutions $\lambda_{izk} = \alpha_{izk} y_{izk}$ and $\lambda_0 = \nu \theta$ linearizes the equilibrium constraint into $\frac{1}{N} \sum_{i=1}^N \sum_{z \in \cS_i} \sum_{k=1}^K \lambda_{izk} + \lambda_0 = 0$. The objective terms correspondingly transform into $-\alpha_{izk} \ell_k^{*1}(\lambda_{izk} / \alpha_{izk}, z)$ and $-\nu \sigma_{\cX}(\lambda_0 / \nu)$. As these are the negative perspective functions of the convex conjugates and the support function, they are jointly concave. While compressing the full maximin problem relies on an off-the-shelf convex (or linear) solver rather than a greedy heuristic, solving this finite-dimensional restricted program remains highly tractable and directly yields a highly compressed least-favorable distribution $\tilde{\bQ}$.

\begin{theorem}
\label{thm::dual_compression}
Let $(\bar{x}_T, \bar{\bQ}_T)$ be the output of Algorithm~\ref{alg::1}. Given an optimal solution $\{\alpha^\star_{izk}, y^\star_{izk}, \theta^\star, \nu^\star\}$ to the restricted dual reformulation~\eqref{eq::restricted_dual_DRO}, the discrete distribution
$$
\tilde{\bQ} := \frac{1}{N} \sum_{i=1}^N \sum_{z \in \cS_i} \sum_{k=1}^K \alpha_{izk}^\star \delta_{z}
$$
belongs to the ambiguity set $\cP$, and $(\bar{x}_T, \tilde{\bQ})$ does not increase the duality gap of $(\bar{x}_T, \bar{\bQ}_T)$. Furthermore, there exists an optimal solution of~\eqref{eq::restricted_dual_DRO} in which at most $N+n+1$ of the weights $\{\alpha_{izk}^\star\}$ are nonzero, restricting the support size of $\tilde{\bQ}$ to at most $N+n+1$.
\end{theorem}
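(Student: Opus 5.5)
Feasibility comes first. By construction, every atom $z\in\cS_i$ was produced by the worst-case oracle as a perturbation $\hat z_i+v$ with $\hat z_i+v\in\cZ$. Any feasible $\{\alpha_{izk}\}$ of \eqref{eq::restricted_dual_DRO} gives, for each $i$, a probability vector over $\cS_i\times[K]$. This yields the coupling $\gamma=\frac1N\sum_i\sum_{z,k}\alpha_{izk}\delta_{(z,\hat z_i)}\in\Gamma(\tilde\bQ,\hat\bP)$, whose cost is at most $\rho$ by the budget constraint. Hence $\OT(\tilde\bQ,\hat\bP)\le\rho$ and $\tilde\bQ\in\cP$.

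For the duality gap, the term $\max_{\bQ\in\cP}f(\bar x_T,\bQ)$ does not involve the second argument, so it suffices to show $\min_{x\in\cX}f(x,\tilde\bQ)\ge\min_{x\in\cX}f(x,\bar\bQ_T)$.

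\begin{itemize}
\item[(1)] Using the Fenchel-duality step from the proof of Proposition~\ref{prop::dual-nonconvex-equivalence}, applied with atoms fixed, I would show that for any feasible $(\alpha,y,\theta,\nu)$ of \eqref{eq::restricted_dual_DRO}, the objective is a lower bound on $\min_{x\in\cX}\frac1N\sum_{i,z,k}\alpha_{izk}\ell_k(x,z)$. This is weak duality: $\ell_k(x,z)\ge \langle y,x\rangle-\ell_k^{*1}(y,z)$, and $\nu\sigma_\cX(\theta)\ge\langle\nu\theta,x\rangle$ on $\cX$. Summing and using the equilibrium constraint cancels the linear terms in $x$.
\item[(2)] That inner quantity is at most $\min_x\frac1N\sum_{i,z}\big(\sum_k\alpha_{izk}\big)\ell(x,z)=\min_x f(x,\tilde\bQ)$, since $\ell\ge\ell_k$.
\item[(3)] Represent $\bar\bQ_T$ as a feasible point of the restricted program. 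Write $\bar\bQ_T=\frac1N\sum_i\sum_{z\in\cS_i}w_{iz}\delta_z$ with $w_{iz}$ the time-averaged masses. These satisfy the normalization and budget constraints by averaging the per-iteration feasible couplings. Assign each $(i,z)$ to an active piece $k(z)\in\argmax_k\ell_k(\bar x,z)$. The choice must be made at a common point, so I expect to need strong duality rather than a single $x$.
\item[(4)] Invoke strong duality. For fixed atoms and weights, the Fenchel dual of $\min_x\frac1N\sum_{i,z}w_{iz}\max_k\ell_k(x,z)$ equals the supremum over splits $w_{iz}=\sum_k\alpha_{izk}$ and over $(y,\theta,\nu)$ of the restricted objective. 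This is the same Sion and Fenchel argument as in Proposition~\ref{prop::dual-nonconvex-equivalence}, now in finite dimensions, and the attainment/regularity of Lemma~\ref{lem::existence_saddle} justifies it. Consequently, the optimal value of \eqref{eq::restricted_dual_DRO} is at least $\min_x f(x,\bar\bQ_T)$.
\end{itemize}

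Combining (1), (2) and (4) at the optimal solution gives $\min_x f(x,\tilde\bQ)\ge\min_x f(x,\bar\bQ_T)$, so the gap does not increase.

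For sparsity, I would follow the last step of Proposition~\ref{prop::dual-nonconvex-equivalence}. Fix the optimal $y^\star,\theta^\star,\nu^\star$, with the locations already fixed. The problem in $\alpha$ is then an LP with $N$ normalization equalities, $n$ equilibrium equalities and one budget inequality. Every optimal $\alpha^\star$ is feasible for this LP, and the LP objective coincides with the restricted objective, so its optimum is attained at a basic feasible solution with at most $N+n+1$ positive entries. Replacing $\alpha^\star$ by this BFS keeps optimality, and by the argument above the resulting $\tilde\bQ$ is still feasible and does not increase the gap.

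The main obstacle is step (4): justifying strong duality (zero gap and attainment) for the finite restricted program, including the splitting of mass across pieces, when $\cX$ may be unbounded. I would use the inf-compactness in Assumption~\ref{asp::regularity}\ref{asp::regular::sets} together with a relative-interior condition to apply Fenchel–Rockafellar duality.
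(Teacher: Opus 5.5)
Your proposal is correct and follows the same route as the paper's proof: feasibility from the marginal and budget constraints, the gap bound by comparing the restricted program's optimal value with the value attainable from $\bar\bQ_T$'s weights, and sparsity via a basic feasible solution of the LP obtained by fixing $(y^\star,\theta^\star,\nu^\star)$. Your split into weak duality (giving $\min_{x}f(x,\tilde\bQ)\geq$ optimum) and Sion--Fenchel strong duality (giving optimum $\geq \min_x f(x,\bar\bQ_T)$) spells out the paper's ``analogous to Proposition~\ref{prop::dual-nonconvex-equivalence}'' step, and the obstacle you flag is routine: Sion applies because the split variables range over a compact product of simplices, and the relative-interior condition for Fenchel duality holds automatically since each $\ell_k(\cdot,z)$ is finite on $\RR^n$ and $\cX$ is nonempty and convex, so inf-compactness is not needed.
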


\begin{proof}
The feasibility $\tilde{\bQ} \in \cP$ is guaranteed because the constraints in~\eqref{eq::restricted_dual_DRO} strictly enforce marginal matching for each empirical sample $\hat{z}_i$ and bound the total optimal transport cost by $\rho$.

To establish duality gap preservation, it suffices to show that $\min_{x \in \cX} f(x, \tilde{\bQ}) \geq \min_{x \in \cX} f(x, \bar{\bQ}_T)$. Let $\cP_{\cS} \subseteq \cP$ denote the ambiguity set restricted to the historical support $\bigcup_{i=1}^N \cS_i$. Analogous to Proposition~\ref{prop::dual-nonconvex-equivalence}, the optimal value of the restricted program~\eqref{eq::restricted_dual_DRO} is exactly equal to the optimal value of the restricted minimax problem $\max_{\bQ \in \cP_{\cS}} \min_{x \in \cX} f(x, \bQ)$. Observe that the time-averaged distribution $\bar{\bQ}_T$ is supported exclusively on this history, meaning $\bar{\bQ}_T \in \cP_{\cS}$. Consequently, its mass allocation can be mapped directly to a feasible set of weights $\{\bar{\alpha}_{izk}\}$ in~\eqref{eq::restricted_dual_DRO}. Because the restricted program~\eqref{eq::restricted_dual_DRO} evaluates the exact maximum over the restricted ambiguity set $\cP_{\cS}$, the optimal value achieved by $\tilde{\bQ}$ must be at least as large as the value attained by the feasible, sub-optimal allocation corresponding to $\bar{\bQ}_T$. Therefore, we have
$$
\min_{x \in \cX} f(x, \tilde{\bQ}) = \max_{\bQ \in \cP_{\cS}} \min_{x \in \cX} f(x, \bQ) \geq \min_{x \in \cX} f(x, \bar{\bQ}_T).
$$
Finally, the sparsity bound emerges directly from the geometric arguments used in Proposition~\ref{prop::dual-nonconvex-equivalence}. By fixing all continuous parameters to their optimal values $\{y_{izk}^\star, \theta^\star, \nu^\star\}$, the remaining maximization over the probability weights $\{\alpha_{izk}\}$ reduces to a linear program (LP). This LP features exactly $N$ equality constraints for the marginals, $n$ equality constraints for the dual decision variables $\{y_{izk}^\star\}$, and $1$ inequality constraint for the total transport budget. By the Fundamental Theorem of Linear Programming, there exists a Basic Feasible Solution for this system that possesses at most $N+n+1$ strictly positive variables $\alpha_{izk}^\star$.
\end{proof}

\begin{remark}
    One might be tempted to simply discard the aggregated distribution $\bar{\bQ}_T$ and compute an $N+1$-point worst-case distribution (best response) specifically against the final primal iterate $\bar{x}_T$. However, in a minimax game, a pure best response against a single primal decision is often highly exploitable by other decisions $x \in \cX$. Consequently, while this $N+1$ distribution correctly evaluates the worst-case risk at $\bar{x}_T$, it fails to act as a global least-favorable distribution, and replacing $\bar{\bQ}_T$ with it would severely degrade the theoretical duality gap. 
\end{remark}

To properly compress the adversary's strategy to at most $N+n+1$ points, the approach in Theorem~\ref{thm::dual_compression} relies on a two-step procedure: first, solving a large-scale convex program to evaluate the exact conjugate functions $\ell_k^{*1}$ and identify the optimal dual variables $\{s^\star_{izk}, y^\star_{izk}, \theta^\star, \nu^\star\}$; and second, fixing these dual variables to solve the resulting LP in~\eqref{eq::restricted_dual_DRO} that extracts the sparse mass allocation. This two-step process is computationally demanding because the first stage essentially aims to exactly model the inner minimization $\min_x f(x, \bar{\bQ}_T)$ in order to preserve the duality gap.

In the following, we further improve this computational cost by providing a direct one-stage approach when each loss component $\ell_k(\cdot, z)$ is continuously differentiable and $\kappa$-smooth for every $k \in [K]$. We further assume that the feasible set $\cX \subset \RR^n$ is compact, and let $D := \max_{x, x' \in \cX} \|x - x'\|_2$ denote the diameter of the domain. The key idea is that instead of solving a conic problem in the first stage to perfectly model $\min_x f(x, \bar{\bQ}_T)$, we use the algorithm's output $\bar{x}_T$ as an approximate solution and replace the first stage entirely. By substituting the loss with its first-order Taylor approximation at $\bar{x}_T$, we reduce the compression task to a much simpler single-stage problem.

Recall that $\cS_i$ denote the finite set of historical atoms generated for the empirical sample $\hat{z}_i$ during Algorithm~\ref{alg::1}. We formulate the direct tangent-based compression program as:

\begin{equation}
\label{eq::tangent_program}
\left\{
\begin{array}{cl}
    \max & \displaystyle \frac{1}{N}\sum_{i=1}^{N} \sum_{z \in \cS_i} \sum_{k=1}^K \alpha_{izk} \ell_k(\bar{x}_T, z) - \sigma_{\cX}(\theta) + \theta^\top \bar{x}_T \\[3ex]
    \text{s.t.} & \alpha_{izk} \in \bR_+, \ \theta \in \RR^n \\[0.5ex]
    &\displaystyle \displaystyle \theta + \frac{1}{N} \sum_{i=1}^{N} \sum_{z \in \cS_i} \sum_{k=1}^{K} \alpha_{izk} \nabla_x \ell_k(\bar{x}_T, z) = 0, \ \frac{1}{N} \sum_{i=1}^N \sum_{z \in \cS_i} \sum_{k=1}^K \alpha_{izk} c(z, \hat{z}_i) \leq \rho, \\
    &\displaystyle \sum_{z \in \cS_i} \sum_{k=1}^K \alpha_{izk} = 1, \, \forall i\in [N].
\end{array}
\right.
\end{equation}
Since $\sigma_{\cX}(\cdot)$ is convex, \eqref{eq::tangent_program} is a standard concave maximization problem. Crucially, when $\cX$ is a polyhedron defined by $\{x : Ax \le b\}$, we have $\sigma_{\cX}(\theta) = \min_{\mu \ge 0 : A^\top \mu = \theta} b^\top \mu$. Substituting this dual representation transforms the objective penalty $-\sigma_{\cX}(\theta) + \theta^\top \bar{x}_T$ into $\max_{\mu \ge 0} \{-\mu^\top(b - A\bar{x}_T)\}$, which reduces~\eqref{eq::tangent_program} directly into a pure LP.

\begin{theorem}
\label{thm::dual_compression_tangent}
    Suppose Assumption~\ref{asp::regularity} holds, the feasible set $\cX$ is compact with diameter $D$, and each $\ell_k(\cdot, z)$ is $\kappa$-smooth on $\cX$. Let $(\bar{x}_T, \bar{\bQ}_T)$ be the output of Algorithm~\ref{alg::1}. Given an optimal solution $(\alpha^\star, \theta^\star)$ to~\eqref{eq::tangent_program}, the discrete distribution
    $$
    \tilde{\bQ} := \frac{1}{N} \sum_{i=1}^N \sum_{z \in \cS_i} \left( \sum_{k=1}^K \alpha_{izk}^\star \right) \delta_{z}
    $$
    belongs to the ambiguity set $\cP$. Furthermore, $\tilde{\bQ}$ bounds the duality gap such that 
    $$ \mathrm{Gap}(\bar{x}_T, \tilde{\bQ}) \leq \min\left\{ D\sqrt{2\kappa \cdot \mathrm{Gap}(\bar{x}_T, \bar{\bQ}_T)}, \; \mathrm{Gap}(\bar{x}_T, \bar{\bQ}_T) + \frac{\kappa D^2}{2} \right\}.$$ 
    Finally, there exists an optimal solution possessing at most $N+n+1$ strictly positive weights $\alpha_{izk}^\star$, restricting the support size of $\tilde{\bQ}$ to at most $N+n+1$ atoms.
\end{theorem}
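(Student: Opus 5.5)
Feasibility of $\tilde{\bQ}$ follows exactly as in Theorem~\ref{thm::dual_compression}. The marginal constraints $\sum_{z\in\cS_i}\sum_k\alpha_{izk}=1$ define a coupling with $\hat\bP$, and the budget constraint bounds its cost by $\rho$. The support bound uses the same LP argument. We fix $\theta^\star$, so the objective term $-\sigma_\cX(\theta^\star)+\theta^{\star\top}\bar x_T$ becomes constant. What remains is an LP in $\alpha$ with $N$ marginal equalities, $n$ gradient equalities $\frac1N\sum\alpha_{izk}\nabla_x\ell_k(\bar x_T,z)=-\theta^\star$, and one budget inequality. A BFS therefore has at most $N+n+1$ positive entries, and merging atoms at the same $z$ does not increase the support size.

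For the gap, I split $\mathrm{Gap}(\bar x_T,\tilde\bQ)=\max_\bQ f(\bar x_T,\bQ)-\min_x f(x,\tilde\bQ)$. The first term is common to both gaps, so it suffices to lower bound $\min_x f(x,\tilde\bQ)$.

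\textbf{Key step: interpret the tangent program.} For weights $\alpha$, set $g_\alpha(x):=\frac1N\sum\alpha_{izk}\ell_k(x,z)$. Eliminating $\theta=-\nabla g_\alpha(\bar x_T)$ turns the objective into
\[
g_\alpha(\bar x_T)+\min_{x\in\cX}\nabla g_\alpha(\bar x_T)^\top(x-\bar x_T),
\]
since $-\sigma_\cX(-\nabla g)-\nabla g^\top \bar x_T\cdot(-1)$ equals $\min_x\langle\nabla g,x-\bar x_T\rangle$. This value is the minimum over $\cX$ of the linearization of $g_\alpha$ at $\bar x_T$.

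\textbf{Lower bound on $\min_x f(x,\tilde\bQ)$.} Because $\ell\ge\ell_k$ pointwise, we have $f(x,\tilde\bQ)\ge g_{\alpha^\star}(x)$. By convexity, $g_{\alpha^\star}$ lies above its linearization, so $\min_x f(x,\tilde\bQ)\ge \mathrm{OPT}$, the optimal value of the tangent program. Next I compare $\mathrm{OPT}$ with $\bar\bQ_T$. Write $\bar\bQ_T$ in the form $\bar\alpha$, assigning each atom to its active piece $k$; this choice is feasible. Then $g_{\bar\alpha}=f(\cdot,\bar\bQ_T)$ is convex and $\kappa$-smooth. Denote $F:=f(\cdot,\bar\bQ_T)$ and $\mathrm{G}:=\mathrm{Gap}(\bar x_T,\bar\bQ_T)$. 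Smoothness gives $F(x)\le F(\bar x_T)+\nabla F(\bar x_T)^\top(x-\bar x_T)+\frac\kappa2\|x-\bar x_T\|^2$, so
\[
\mathrm{OPT}\ge \min_x\bigl[F(\bar x_T)+\nabla F^\top(x-\bar x_T)\bigr]\ge \min_x F(x)-\tfrac{\kappa D^2}{2}.
\]
This yields the second bound, $\mathrm{G}+\kappa D^2/2$.

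\textbf{Main obstacle: the $D\sqrt{2\kappa\mathrm G}$ bound.} Here I need a sharper estimate of the linearization gap $h:=F(\bar x_T)-\min_x[F(\bar x_T)+\nabla F^\top(x-\bar x_T)]$, which is the Frank--Wolfe gap. I will use the standard descent argument. Let $s$ be the linear minimizer, and for $\gamma\in[0,1]$ smoothness gives
\[
F(\bar x_T+\gamma(s-\bar x_T))\le F(\bar x_T)-\gamma h+\tfrac{\kappa\gamma^2D^2}{2}.
\]
The point $\bar x_T+\gamma(s-\bar x_T)$ lies in $\cX$, so
\[
F(\bar x_T)-\min F\ge \max_\gamma\bigl(\gamma h-\tfrac{\kappa\gamma^2D^2}{2}\bigr).
\]
Since $\max_Q f(\bar x_T,Q)\ge F(\bar x_T)$, we also have $F(\bar x_T)-\min F\le \mathrm G$. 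Optimizing over $\gamma$ gives $h\le \sqrt{2\kappa D^2\mathrm G}$ when $h\le\kappa D^2$, and $h\le \mathrm G+\kappa D^2/2$ otherwise.

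Then
\[
\mathrm{Gap}(\bar x_T,\tilde\bQ)\le \max_Q f(\bar x_T,Q)-F(\bar x_T)+h\le(\mathrm G-(F(\bar x_T)-\min F))+h.
\]
To finish, I would bound this by $D\sqrt{2\kappa\mathrm G}$, but doing so requires the first-term slack to be absorbed. This is the delicate part. Tentatively, I would use the fact that $\bar x_T$ is an approximate minimizer, so that term is itself at most $\mathrm G$, and check the case analysis carefully, possibly taking $\mathrm G\le\kappa D^2/2$ so that $\mathrm G\le D\sqrt{2\kappa\mathrm G}$.
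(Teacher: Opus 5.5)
\textbf{There is a genuine gap.} Your feasibility argument, the $N+n+1$ support count, and the inequality $\min_x f(x,\tilde\bQ)\ge\mathrm{OPT}$ are correct and coincide with the paper. The gap is in comparing $\mathrm{OPT}$ with $\bar\bQ_T$.

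For $K\ge 2$ there is no fixed assignment $\bar\alpha$ with $g_{\bar\alpha}=f(\cdot,\bar\bQ_T)$, because the active piece of an atom changes with $x$. So $F=f(\cdot,\bar\bQ_T)$ is an expected pointwise maximum of smooth functions and is generally not differentiable. Neither $\nabla F(\bar x_T)$ nor the descent inequality for $F$ is available; your argument is fine only when there is effectively a single piece. Freezing the pieces active at $\bar x_T$ gives only a smooth minorant $g_{\bar\alpha}\le F$ touching $F$ at $\bar x_T$, and its linearization can lie far below $\min F$. For example, with $\cX=[-1,1]$, $\ell_1=x$, $\ell_2=-x$ and $\bar x_T=0$, either active piece yields a linearization with minimum $-1$, while $\min F=0$. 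So not even your bound $\mathrm{G}+\kappa D^2/2$ follows.

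The missing idea is to use the maximization over $\alpha$ through a minimax swap. Write $\mathrm{OPT}=\max_{\alpha\in\cW}\min_{x\in\cX}\frac1N\sum\alpha_{izk}\hat\ell_k(x,z)$, where $\hat\ell_k$ is the tangent of $\ell_k(\cdot,z)$ at $\bar x_T$. This objective is bilinear over compact convex sets, so Sion lets you pass to $\min_x\max_\alpha$. For fixed $x$ the adversary can then pick the best tangent piece at every atom. Since the piece-splittings of $\bar\bQ_T$ are feasible, $\mathrm{OPT}\ge\min_x\hat f(x,\bar\bQ_T)$ with $\hat f(x,\bQ)=\EE_{z\sim\bQ}[\max_k\hat\ell_k(x,z)]$. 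This $\hat f$ is convex, satisfies $\hat f\le F\le\hat f+\frac{\kappa}{2}\|x-\bar x_T\|_2^2$, and is exact at $\bar x_T$. Your descent step then goes through using convexity of $\hat f$ along $\bar x_T+\gamma(\hat x-\bar x_T)$, with $\hat x\in\argmin_x\hat f(x,\bar\bQ_T)$, instead of smoothness of $F$. This is the paper's route.

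\textbf{Finishing the bound.} Do not replace $e:=F(\bar x_T)-\min F$ by $\mathrm{G}$ too early. With $h:=F(\bar x_T)-\min_x\hat f(x,\bar\bQ_T)$, the descent step gives $h\le e/\gamma+\kappa\gamma D^2/2$ for every $\gamma\in(0,1]$. Hence
\[
\mathrm{Gap}(\bar x_T,\tilde\bQ)\le\mathrm{G}-e+h\le\mathrm{G}+e\Bigl(\frac{1}{\gamma}-1\Bigr)+\frac{\kappa\gamma D^2}{2}\le\frac{\mathrm{G}}{\gamma}+\frac{\kappa\gamma D^2}{2},
\]
using $e\le\mathrm{G}$. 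Minimizing over $\gamma\in(0,1]$ gives $D\sqrt{2\kappa\mathrm{G}}$ when $\mathrm{G}\le\kappa D^2/2$, and $\mathrm{G}+\kappa D^2/2$ otherwise. This is exactly what the paper's proof establishes.

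Your suspicion that the square-root bound needs $\mathrm{G}\le\kappa D^2/2$ is right, and no argument can remove that condition. By AM--GM, $\mathrm{G}+\kappa D^2/2\ge D\sqrt{2\kappa\mathrm{G}}$, so the $\min\{\cdot,\cdot\}$ as literally displayed always equals $D\sqrt{2\kappa\mathrm{G}}$, which is false in general. For instance, take $\ell(x,z)=x$ with $\cX=\cZ=[-1,1]$, $c(z,\hat z)=|z-\hat z|$, $T=1$ and $x_1=1$. This loss is $\kappa$-smooth for every $\kappa\ge0$ and has $\mathrm{G}=D=2$, yet $\mathrm{Gap}(\bar x_T,\tilde\bQ)=2>4\sqrt{\kappa}$ for $\kappa<1/4$. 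The provable statement is the case split $\min_{s\in(0,1]}\{\mathrm{G}/s+\kappa sD^2/2\}$; the paper's closing sentence glosses over this distinction.
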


\begin{proof}
The marginal and budget constraints in~\eqref{eq::tangent_program} explicitly ensure that $\tilde{\bQ} \in \cP$. To establish the sparsity bound, consider the problem after the optimal dual variable $\theta^\star$ has been determined. Fixing $\theta = \theta^\star$, the remaining maximization over $\alpha \ge 0$ is an LP with exactly $N$ normalization rows, $1$ budget row, and $n$ equilibrium rows. By the Fundamental Theorem of Linear Programming, there exists an optimal basic feasible solution for this subproblem with at most $N+n+1$ strictly positive variables $\alpha^\star_{izk}$, thereby limiting the support of $\tilde{\bQ}$ to at most $N+n+1$ atoms.

To bound the duality gap, we must lower bound the worst-case risk $\min_{x \in \cX} f(x, \tilde{\bQ})$. For any fixed $x \in \cX$, the convexity of $\ell_k(\cdot, z)$ yields the standard gradient inequality:
$$
\ell_k(x, z) \ge \ell_k(\bar{x}_T, z) + \nabla_x \ell_k(\bar{x}_T, z)^\top (x - \bar{x}_T).
$$
Because $f(x, \tilde{\bQ}) = \frac{1}{N}\sum_{i,z} \left(\sum_k \alpha^\star_{izk}\right) \max_k \ell_k(x,z) \ge \frac{1}{N}\sum_{i,z,k} \alpha^\star_{izk} \ell_k(x, z)$, we apply the gradient inequality to obtain:
$$
f(x, \tilde{\bQ}) \ge \frac{1}{N} \sum_{i,z,k} \alpha_{izk}^\star \ell_k(\bar{x}_T, z) + \left( \frac{1}{N} \sum_{i,z,k} \alpha_{izk}^\star \nabla_x \ell_k(\bar{x}_T, z) \right)^{\!\!\top} \!(x - \bar{x}_T).
$$
Substituting the equilibrium constraint from~\eqref{eq::tangent_program}, we have $\frac{1}{N} \sum \alpha_{izk}^\star \nabla_x \ell_k(\bar{x}_T, z) = -\theta^\star$. Thus, the trailing term becomes $-\theta^{\star\top} (x - \bar{x}_T) = \theta^{\star\top} \bar{x}_T - \theta^{\star\top} x$. By definition of the support function, $\theta^{\star\top} x \le \sigma_{\cX}(\theta^\star)$, meaning $-\theta^{\star\top} x \ge -\sigma_{\cX}(\theta^\star)$ for all $x \in \cX$. Therefore:
$$
f(x, \tilde{\bQ}) \ge \frac{1}{N} \sum_{i,z,k} \alpha_{izk}^\star \ell_k(\bar{x}_T, z) - \sigma_{\cX}(\theta^\star) + \theta^{\star\top} \bar{x}_T =: J^\star,
$$
which implies $\min_{x \in \cX} f(x, \tilde{\bQ}) \ge J^\star$, where $J^\star$ is the optimal value of~\eqref{eq::tangent_program}. 

We now lower bound the optimal value $J^\star$. Let $\hat{\ell}_k(x, z) := \ell_k(\bar{x}_T, z) + \nabla_x \ell_k(\bar{x}_T, z)^\top (x - \bar{x}_T)$ denote the tangent loss, and let $\hat{\ell}(x, z) = \max_k \hat{\ell}_k(x, z)$. We define the expected tangent loss as $\hat{f}(x, \bQ) := \EE_{z \sim \bQ}[\hat{\ell}(x, z)]$. By substituting the definition of $\sigma_{\cX}(\theta) = \sup_{x \in \cX} \theta^\top x$ and the equilibrium constraint $\theta = -\frac{1}{N} \sum \alpha_{izk} \nabla_x \ell_k(\bar{x}_T, z)$ back into the objective, the penalty term evaluates exactly to $\min_{x \in \cX} \langle \frac{1}{N} \sum \alpha_{izk} \nabla_x \ell_k(\bar{x}_T, z), x - \bar{x}_T \rangle$. Thus, we may conclude
$$ J^\star = \max_{\alpha \in \cW} \min_{x \in \cX} \textstyle \left\{ \frac{1}{N}\sum_{i,z,k} \alpha_{izk} \hat{\ell}_k(x, z) \right\},$$
where $\cW := \{\alpha \ge 0 : \frac{1}{N} \sum_{i=1}^N \sum_{z \in \cS_i} \sum_{k=1}^K \alpha_{izk} c(z, \hat{z}_i) \leq \rho,\, \sum_{z \in \cS_i} \sum_{k=1}^K \alpha_{izk} = 1, \, \forall i\in [N]\}$. Since $\cW$ and $\cX$ are compact convex sets and the objective is bilinear, Sion's Minimax Theorem enables us to swap the operators to $\min_x \max_\alpha$. For a fixed $x$, the inner maximum assigns probability mass to the component $k$ that maximizes the tangent. This inner maximum evaluates exactly to $\max_{\bQ \in \cP_{\cS}} \hat{f}(x, \bQ)$. Since $\bar{\bQ}_T \in \cP_{\cS}$, this implies:
$$ J^\star = \min_{x \in \cX} \max_{\bQ \in \cP_{\cS}} \hat{f}(x, \bQ) \ge \min_{x \in \cX} \hat{f}(x, \bar{\bQ}_T).$$
We bound $\min_x \hat{f}(x, \bar{\bQ}_T)$ by sandwiching it against the true expected loss $f(x, \bar{\bQ}_T)$. Since $\hat{\ell}_k \le \ell_k \le \ell$, we globally have $\hat{f}(x, \bar{\bQ}_T) \le f(x, \bar{\bQ}_T)$. At the terminal point $\bar{x}_T$, the tangents are exact: $\hat{f}(\bar{x}_T, \bar{\bQ}_T) = f(\bar{x}_T, \bar{\bQ}_T)$. Furthermore, the $\kappa$-smoothness of $\ell_k(\cdot, z)$ yields the descent lemma bound $\ell_k(x, z) \le \hat{\ell}_k(x, z) + \frac{\kappa}{2}\|x - \bar{x}_T\|_2^2$, which after taking the maximum over $k$ and expectations gives 
$$
f(x, \bar{\bQ}_T) \le \hat{f}(x, \bar{\bQ}_T) + \frac{\kappa}{2}\|x - \bar{x}_T\|_2^2.
$$
Let $\zeta_T := \mathrm{Gap}(\bar{x}_T, \bar{\bQ}_T) = \max_{\bQ \in \cP} f(\bar{x}_T, \bQ) - \min_{x \in \cX} f(x, \bar{\bQ}_T)$. Let $\hat{x} \in \argmin_{x \in \cX} \hat{f}(x, \bar{\bQ}_T)$. For any parameter $s \in (0, 1]$, define $x_s = (1-s)\bar{x}_T + s\hat{x} \in \cX$. Using the sandwich bounds and the convexity of $\hat{f}(\cdot, \bar{\bQ}_T)$, we obtain:
\begin{align*}
    \min_{x \in \cX} f(x, \bar{\bQ}_T) \le f(x_s, \bar{\bQ}_T) &\le \hat{f}(x_s, \bar{\bQ}_T) + \frac{\kappa s^2}{2}\|\hat{x} - \bar{x}_T\|_2^2 \\
    &\le (1-s)\hat{f}(\bar{x}_T, \bar{\bQ}_T) + s\hat{f}(\hat{x}, \bar{\bQ}_T) + \frac{\kappa s^2 D^2}{2}.
\end{align*}
Since $f(\bar{x}_T, \bar{\bQ}_T) \le \max_{\bQ \in \cP} f(\bar{x}_T, \bQ)$, we have $\hat{f}(\bar{x}_T, \bar{\bQ}_T) = f(\bar{x}_T, \bar{\bQ}_T) \le \min_{x \in \cX} f(x, \bar{\bQ}_T) + \zeta_T$. Substituting this into the inequality and isolating $\hat{f}(\hat{x}, \bar{\bQ}_T) = \min_{x \in \cX} \hat{f}(x, \bar{\bQ}_T)$ yields:
$$
\min_{x \in \cX} \hat{f}(x, \bar{\bQ}_T) \ge \min_{x \in \cX} f(x, \bar{\bQ}_T) - \frac{1-s}{s}\zeta_T - \frac{\kappa sD^2}{2}.
$$
Consequently, the worst-case risk of the compressed distribution satisfies:
$$ \min_{x \in \cX} f(x, \tilde{\bQ}) \ge J^\star \ge \min_{x \in \cX} f(x, \bar{\bQ}_T) - \left( \frac{\zeta_T}{s} + \frac{\kappa sD^2}{2} \right) + \zeta_T. $$
Minimizing the subtractive expression over $s \in (0, 1]$ directly yields the bound stated in the theorem.
\end{proof}

\begin{remark}
    If the feasible region $\cX$ is not globally compact, Assumption~\ref{asp::regularity} guarantees that the expected loss is inf-compact. Consequently, the relevant sublevel sets containing the primal minimizers are strictly bounded. Without loss of generality, we can define a compact effective domain $\cX_T := \cX \cap \{x : \|x - \bar{x}_T\|_\infty \le R\}$ for a sufficiently large radius $R > 0$. Since restricting the problem to a region containing the global minimizers does not change the optimal value of the primal DRO problem, one can simply replace $\cX$ with $\cX_T$ in formulation~\eqref{eq::tangent_program}. The support function becomes $\sigma_{\cX_T}(\theta)$, the loss is restricted to be $\kappa$-smooth on $\cX_T$, and the diameter is bounded by $D \le 2R\sqrt{n}$. When $\cX$ is polyhedral, $\cX_T$ is also a polyhedron, ensuring the post-processing step remains a pure LP.
\end{remark}

\begin{remark}
    The $O(\sqrt{\mathrm{Gap}(\bar{x}_T, \bar{\bQ}_T)})$ bound in Theorem~\ref{thm::dual_compression_tangent} is not an artifact of the proof, but reflects the underlying curvature of the relaxed objective. However, when the loss components are piecewise-affine in $x$ (i.e., $\kappa=0$), the tangent model becomes exact. In this regime, the approximation error vanishes, and the single convex program~\eqref{eq::tangent_program} compresses the distribution to $N+n+1$ points while perfectly preserving the exact duality gap.
\end{remark}


\section{Numerical Experiments}
\label{sec::numerics}

In this section, we evaluate the runtime, accuracy, and scalability of our proposed algorithms on synthetic instances involving piecewise quadratic loss functions. Our numerical experiments consist of two parts. First, we isolate the inner worst-case expectation problem to demonstrate the efficiency of our budget allocation algorithm (Algorithm~\ref{alg::master_eval}) relative to state-of-the-art commercial solvers, namely Gurobi and MOSEK. Building on the scalability of this inner oracle, we then evaluate the Distributional Best-Response algorithm (Algorithm~\ref{alg::1}) on the primal DRO problem~\eqref{eq::primal_DRO}, and further examine the additional benefits of solving the dual DRO problem~\eqref{eq::dual_DRO} in improving both the accuracy and sparsity of the resulting worst-case/least-favorable distributions. The Python implementation of our algorithms and the code used to generate the results in this paper were run on a MacBook Pro (Apple M4 chip, 16GB RAM), and are publicly available at: \url{https://github.com/Christ1anChen/OT-DRO}

\subsection{Data Generation and Implementation Details}
\label{subsec::data_generation}

In all experiments, we take the uncertainty set to be $\mathcal{Z} = \mathbb{R}^m$ and the feasible region to be the $\ell_1$-norm ball $\mathcal{X} = \{x \in \mathbb{R}^{n}: \|x\|_1 \leq R\}$ with radius $R = 100$, and we set the decision and uncertainty dimensions equal, $m = n$. The transportation cost is the Euclidean distance $c(z, \hat{z}) = \|z - \hat{z}\|_2$, and the ambiguity radius is fixed at $\rho = 0.1$.

The empirical dataset comprises $N$ independent and identically distributed (i.i.d.) samples $\{\hat{z}_i\}_{i=1}^{N} \subset \mathcal{Z}$, drawn from a normal distribution with a randomly generated nonzero mean. Specifically, we first fix a global mean vector $\hat{\mu} \sim \mathcal{N}(0, I_m)$, and conditioned on $\hat{\mu}$, each empirical sample is generated as $\hat{z}_i \sim \mathcal{N}(\hat\mu, I_m)$. The empirical samples are redrawn for every run of the experiments, while the global mean vector is redrawn only when the dimension changes.

The component loss functions $\ell_k$ for the primal DRO problem take the following piecewise quadratic form for $k \in [K]$:
$$
\ell_k(x, z) = x^\top C_k x + z^\top B_k x - z^\top A_k z.
$$
To guarantee that each component loss $\ell_k(x,z)$ is convex in $x$ and concave in $z$, we randomly draw standard normal matrices $X_A, X_C \in \mathbb{R}^{m \times m}$ and construct the positive definite matrices as $A_k = \frac{1}{m} X_A^\top X_A + 0.01 I_m$ and $C_k = \frac{1}{m} X_C^\top X_C + 0.01 I_m$. The addition of $0.01 I_m$ ensures numerical stability during optimization. The bilinear coupling matrix is generated as $B_k = X_B$, where $X_B$ is an independent standard normal matrix used to shift the cross-terms. 

For isolated evaluations of the inner worst-case expectation solvers (Section~\ref{subsec::numeric_worst_case}), we evaluate the environment using a fixed nominal primal decision variable $x_{\text{nom}} \sim \mathcal{N}(0, I_m)$. Under this condition, the loss components reduce to quadratic forms $\ell_k(z) = c_k + b_k^\top z - z^\top A_k z$, with constants and linear coefficients defined as $c_k = x_{\text{nom}}^\top C_k x_{\text{nom}}$ and $b_k = B_k x_{\text{nom}}$, while $A_k$ remains unchanged.

\subsection{Worst-case Expectation Problem}
\label{subsec::numeric_worst_case}

We first evaluate the computational efficiency of the proposed budget allocation algorithm (Algorithm~\ref{alg::master_eval}) for solving the inner worst-case expectation problem~\eqref{eq::worst_case}. Across all experiments, the tolerance $\eta$ and oracle precision $\epsilon$ of Algorithm~\ref{alg::master_eval} are universally set to $10^{-3}$. We benchmark our method against state-of-the-art commercial solvers, Gurobi and MOSEK, which compute the approximate global optimum by reformulating the inner quadratic loss problem into the following second-order cone program (SOCP) \cite{mohajerin2018data}: 
$$
\left\{
\begin{array}{cl}
    \max & \displaystyle \frac{1}{N} \sum_{i=1}^N \sum_{k=1}^K \left[ \alpha_{ik} \left( c_k + b_k^\top \hat{z}_i - \hat{z}_i^\top A_k \hat{z}_i \right) + \left(b_k - 2 A_k \hat{z}_i \right)^\top q_{ik} - t_{ik} \right] \\[3.0ex]
    \text{s.t.} & \alpha_{ik}, t_{ik} \in \mathbb{R}_+, \ q_{ik} \in \mathbb{R}^m \\[0.0ex]
    &\displaystyle t_{ik}\alpha_{ik} \ge \norm{A_k^{1/2} q_{ik}}_2^2,\,\forall i\in [N], \forall k\in[K],\ \frac{1}{N} \sum_{i=1}^N \sum_{k=1}^K \norm{q_{ik}}_2 \leq \rho, \\[-1.0ex]
    &\displaystyle \sum_{k=1}^K \alpha_{ik} = 1, \, \forall i\in [N].
\end{array}
\right.
$$
To systematically assess scalability, we track the runtime and objective value accuracy across three experimental settings: (1) varying the number of components $K \in \{2, 4, 6, 8, 10\}$ with fixed $N=10, m=500$; (2) varying the sample size $N \in \{10, 50, 100, 500, 1000\}$ with fixed $K=3, m=500$; and (3) varying the dimension $m \in \{10, 50, 100, 500, 1000\}$ with fixed $K=3, N=100$.

\begin{figure}[htbp]
\centering
\includegraphics[width=0.95\linewidth]{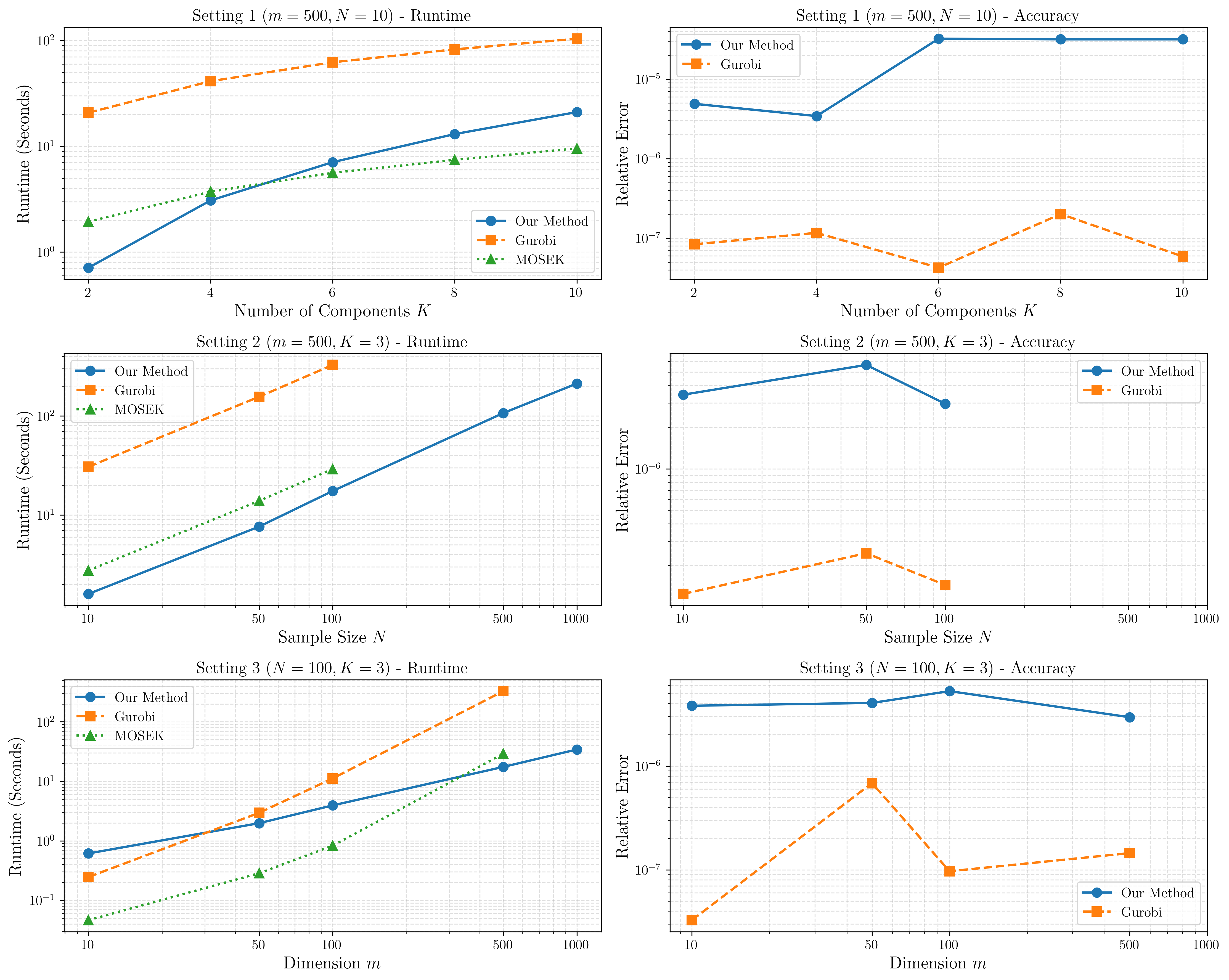}
\caption{Computational runtime and accuracy of the inner worst-case expectation solvers across varying problem parameters. \textbf{Left column:} Runtime (in seconds) for each method, shown on a logarithmic scale. \textbf{Right column:} Relative error of the optimal objective value relative to the baseline solver (MOSEK), shown on a logarithmic scale. Rows 1, 2, and 3 correspond to scaling with respect to the number of components $K$ ($m=500$, $N=10$), the sample size $N$ ($m=500$, $K=3$), and the data dimension $m$ ($N=100$, $K=3$), respectively. Missing lines indicate that MOSEK triggered an out-of-memory (OOM) error due to its dense conic lifting, or that Gurobi exceeded the $10$-minute wall-clock limit.}
\label{fig:inner_comp}
\end{figure}

The performance of the inner solvers is summarized in Figure~\ref{fig:inner_comp}. Here, our method leverages the budget allocation algorithm (Algorithm~\ref{alg::master_eval}) to solve the inner maximization~\eqref{eq::worst_case}, coupled with the sorting-based greedy algorithm (Section~\ref{subsec::support_compression}) to enforce post-processing support compression. Across all instances for which the solvers successfully terminate, we adopt MOSEK's output as the high-precision baseline to compute the relative error, defined as $|\text{Obj}_{\text{algo}} - \text{Obj}_{\text{MOSEK}}| / |\text{Obj}_{\text{MOSEK}}|$. As shown in the results, our algorithm computes an objective value that matches the baseline up to a relative error of approximately $10^{-5}$, verifying its strong numerical accuracy.

Regarding computational runtime, Setting 1 (Row 1) demonstrates that for moderate data dimensions ($m=500$) and a limited number of components ($K \leq 10$), the runtimes of MOSEK and our proposed method are comparable. Although MOSEK is highly optimized for these moderate-scale conic regimes, our algorithm remains highly competitive and well within practical limits. However, Settings 2 and 3 (Rows 2 and 3) highlight the critical impact of the overall problem scale, which acts as the primary computational bottleneck for commercial solvers. For large instances (e.g., $N=100$, $m=1000$ or $N=500$, $m=500$), commercial solvers fail entirely: MOSEK encounters an Out-Of-Memory (OOM) error due to the severe memory overhead required to construct and maintain $O(NKm^2)$ matrix blocks, while Gurobi exceeds the 10-minute wall-clock limit. In contrast, our proposed budget allocation algorithm completely avoids dense matrix lifting and scales gracefully with both the dimension $m$ and the sample size $N$. Crucially, for any fixed dual candidate $\lambda$, our budget allocation algorithm decomposes naturally across all $N$ empirical samples, an inherently parallelizable structure we introduce in Section~\ref{subsec::efficient_budget_allocation} and exploit in our implementation. By multi-threading across samples, our method maintains minimal memory overhead and efficiently solves large instances.

\subsection{Primal and Dual DRO Problems}

Having established the efficiency of the inner oracle, we now evaluate the practical efficacy of our Distributional Best-Response algorithm (Algorithm~\ref{alg::1}) for solving the full primal DRO problem~\eqref{eq::primal_DRO}. We set the number of loss components to $K=3$ and apply the identical tolerances to the inner budget allocation algorithm as specified in Section~\ref{subsec::numeric_worst_case}.

As a baseline for evaluation, we compute a nearly-exact global optimum of the primal DRO problem using MOSEK. Despite its accuracy, MOSEK relies on expensive-to-solve conic formulations, prohibiting its graceful scalability. Specifically, by dualizing the inner maximization problem, \cite{mohajerin2018data} show that the minimax problem can be reformulated as a single semidefinite program (SDP). To further enhance MOSEK's performance, we apply the Schur complement to convert the massive positive semidefinite constraints into a set of rotated second-order cones (RSOC). This yields the following equivalent formulation:
\begin{equation}
\label{eq::mosek_socp}
\left\{
\begin{array}{cl}
\min & \displaystyle \lambda \rho + \frac{1}{N} \sum_{i=1}^N s_i \\[3.0ex]
\text{s.t.} & \lambda,r_k \in \RR_+,\ s_i \in \RR,\ x, u \in \RR^n,\ w_{ik}\in \RR^n\\[1.5ex]
& x^\top C_k x \leq r_k, \, \forall k\in [K],\ 
\norm{w_{ik}}_2 \leq \lambda, \, \forall i \in [N], \forall k \in [K],\ -u \preceq x \preceq u, \ \sum_{j=1}^n u_j \leq R,\\[2.0ex]
& \frac{1}{4} \left( B_k x -2 A_k \hat{z}_i - w_{ik}\right)^\top A_k^{-1} \left( B_k x -2 A_k \hat{z}_i - w_{ik}\right) \leq s_i - r_k - \hat{z}_i^\top B_k x + \hat{z}_i^\top A_k \hat{z}_i.
\end{array}
\right.
\end{equation}
Against this baseline, we evaluate our Distributional Best-Response algorithm. The algorithm is initialized at the origin ($x_0 = \mathbf{0}$) and evaluated over $T = 50$ iterations, updating the primal decision variable $x$ via subgradient descent with a decaying learning rate of $\texttt{lr}_t = 0.2 / \sqrt{t}$. We employ a tail-averaging scheme where the first $20\%$ of the iterations act as a burn-in phase; the final output is extracted as the uniform average of the remaining trajectory, which prevents the high-variance early steps from degrading the final solution quality.
\begin{figure}[htbp]
    \centering
    \includegraphics[width=0.7\linewidth]{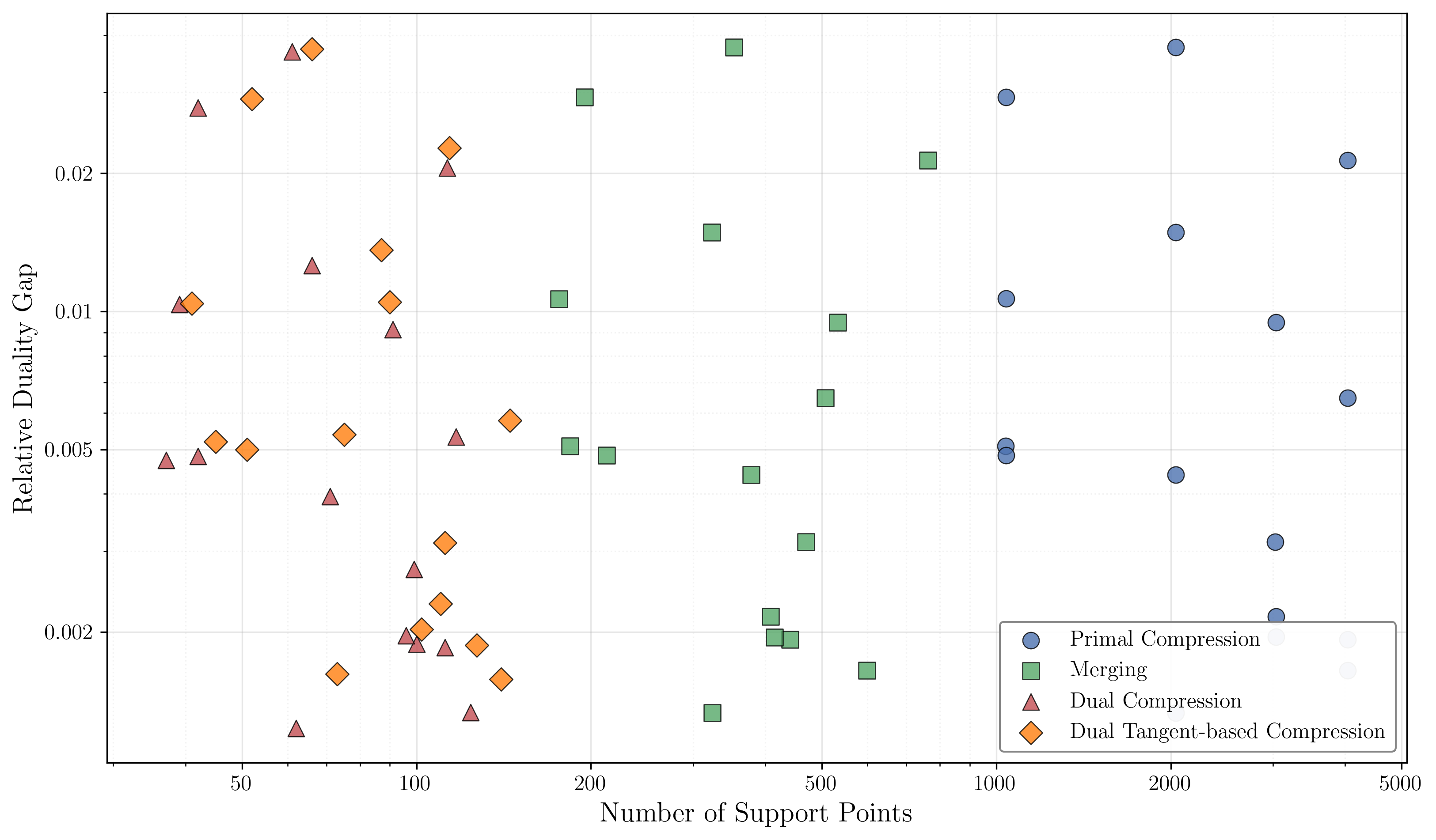}
    \caption{Relative duality gap versus support size across four post-processing methods: (1) primal compression (``Primal Compression''); (2) primal compression augmented by a merging heuristic (``Merging''); (3) dual compression (``Dual Compression''); and (4) dual tangent-based compression (``Dual Tangent-based Compression''). The figure displays results from $16$ distinct problem instances, generated using every combination of four sample sizes ($N \in \{25, 50, 75, 100\}$) and four dimensions ($n \in \{25, 50, 75, 100\}$). Each method is represented by $16$ scatter points, corresponding to its performance on each of these instances.}
    \label{fig:post_process_compare}
\end{figure}

To evaluate the performance of the compression methods proposed in Section~\ref{subsec::compression_dual}, we analyze the sparsity of the resulting distribution $\hat{\bQ}_T$ and its corresponding relative duality gap 
\[
\left| \frac{\mathrm{Gap}(\bar{x}_T, \hat{\bQ}_T)}{\min_{x \in \cX}\max_{\bQ \in \cP} \EE_{z\sim \bQ} [\ell(x,z)]} \right|
= \left| \frac{\max_{\bQ \in \cP} f(\bar x_T, \bQ) - \min_{x\in \cX} f(x, \hat{\bQ}_T)}{\min_{x \in \cX}\max_{\bQ \in \cP} f(x, \bQ)} \right|
\]
across four distinct post-processing methods: (1) primal compression based on the sorting-based greedy method discussed in Section~\ref{subsec::support_compression}; (2) primal compression augmented by a merging heuristic, which merges points within a $10^{-3}$ $\ell_2$-distance of each other; (3) dual compression by solving the restricted dual program~\eqref{eq::restricted_dual_DRO}; and (4) dual compression by solving the tangent-based formulation~\eqref{eq::tangent_program}.

We consider $16$ distinct problem instances, each solved via these four post-processing methods. These instances are generated using every combination of four sample sizes ($N \in \{25, 50, 75, 100\}$) and four dimensions ($n \in \{25, 50, 75, 100\}$). As shown in Figure~\ref{fig:post_process_compare}, the primal compression method yields support sizes of $1040, 2040, 3040, 4040$ for $N = 25, 50, 75, 100$, respectively. This matches our theory precisely: the Distributional Best-Response Algorithm (Algorithm~\ref{alg::1}) can produce distributions with up to $T(N+1)$ atoms. Figure~\ref{fig:post_process_compare} also demonstrates the benefit of the naive merging heuristic: the support sizes (averaged over the dimension $n$) reduce to $192, 344.5, 456.5, 577$ for $N = 25, 50, 75, 100$, respectively, substantially improving upon the previous method without worsening solution quality. As predicted by our theoretical analysis, the dual compression approaches yield the sparsest supports: the restricted dual program~\eqref{eq::restricted_dual_DRO} produces least-favorable distributions with average support sizes of $40, 65, 96.5, 116.5$ for $N = 25, 50, 75, 100$, while simultaneously slightly improving the duality gap. One downside of this method, however, is its computational cost: it requires first solving the primal DRO problem, whose solution is then used to formulate and solve the restricted dual compression program~\eqref{eq::restricted_dual_DRO}. For the largest instance ($N=100$, $n=100$), the primal DRO step takes $160.14$ seconds in $50$ iterations, and the subsequent dual compression step demands an additional $334.36$ seconds. The tangent-based formulation~\eqref{eq::tangent_program} eliminates this additional computational cost by reducing the compression step to a linear program, albeit with a slight deterioration in the optimality gap (as predicted by Theorem~\ref{thm::dual_compression_tangent}). For this same largest instance, the runtime of the compression step drops drastically to $0.29$ seconds.


\section*{Acknowledgments}
Salar Fattahi is supported, in part, by the NSF CAREER grant CCF-2337776 and ONR grant N00014-26-1-2074. Soroosh Shafiee is supported, in part, by the NSF CAREER grant ECCS-2541066.


\bibliography{references}

@article{diestel1977remarks,
  title={Remarks on weak compactness in L1 ($\mu$, X)},
  author={Diestel, J},
  journal={Glasgow Mathematical Journal},
  volume={18},
  number={1},
  pages={87--91},
  year={1977},
  publisher={Cambridge University Press}
}

@book{cormen2022introduction,
  title={Introduction to Algorithms},
  author={Cormen, Thomas H and Leiserson, Charles E and Rivest, Ronald L and Stein, Clifford},
  year={2022},
  publisher={MIT press}
}

@article{beck2003mirror,
  title={Mirror descent and nonlinear projected subgradient methods for convex optimization},
  author={Beck, Amir and Teboulle, Marc},
  journal={Operations Research Letters},
  volume={31},
  number={3},
  pages={167--175},
  year={2003},
  publisher={Elsevier}
}

@article{hazan2012projection,
  title={Projection-free online learning},
  author={Hazan, Elad and Kale, Satyen},
  journal={arXiv:1206.4657},
  year={2012}
}

@inproceedings{hazan2020faster,
  title={Faster projection-free online learning},
  author={Hazan, Elad and Minasyan, Edgar},
  booktitle={Conference on Learning Theory},
  pages={1877--1893},
  year={2020}
}

@article{hazan2007logarithmic,
  title={Logarithmic regret algorithms for online convex optimization},
  author={Hazan, Elad and Agarwal, Amit and Kale, Satyen},
  journal={Machine Learning},
  volume={69},
  number={2},
  pages={169--192},
  year={2007},
  publisher={Springer}
}

@inproceedings{zinkevich2003online,
  title={Online convex programming and generalized infinitesimal gradient ascent},
  author={Zinkevich, Martin},
  booktitle={International Conference on Machine Learning},
  pages={928--936},
  year={2003}
}

@article{shalev2025online,
  title={Online learning and online convex optimization},
  author={Shalev-Shwartz, Shai},
  journal={Foundations and Trends in Machine Learning},
  volume={4},
  number={2},
  pages={107--194},
  year={2012}
}

@book{boyd2004convex,
  title={Convex Optimization},
  author={Boyd, Stephen and Vandenberghe, Lieven},
  year={2004},
  publisher={Cambridge University Press}
}

@article{yue2022linear,
  title={On linear optimization over {W}asserstein balls},
  author={Yue, Man-Chung and Kuhn, Daniel and Wiesemann, Wolfram},
  journal={Mathematical Programming},
  volume={195},
  number={1},
  pages={1107--1122},
  year={2022},
  publisher={Springer}
}

@article{bubeck2015convex,
  title={Convex optimization: Algorithms and complexity},
  author={Bubeck, S{\'e}bastien},
  journal={Foundations and Trends in Machine Learning},
  volume={8},
  number={3-4},
  pages={231--357},
  year={2015},
  publisher={Now Publishers, Inc.}
}

@article{ben2015oracle,
  title={Oracle-based robust optimization via online learning},
  author={Ben-Tal, Aharon and Hazan, Elad and Koren, Tomer and Mannor, Shie},
  journal={Operations Research},
  volume={63},
  number={3},
  pages={628--638},
  year={2015},
  publisher={INFORMS}
}

@article{orabona2019modern,
  title={A Modern Introduction to Online Learning},
  author={Orabona, Francesco},
  journal={arXiv:1912.13213},
  year={2019}
}

@article{mohajerin2018data,
  title={Data-driven distributionally robust optimization using the {W}asserstein metric: Performance guarantees and tractable reformulations},
  author={Mohajerin Esfahani, Peyman and Kuhn, Daniel},
  journal={Mathematical Programming},
  volume={171},
  number={1},
  pages={115--166},
  year={2018},
  publisher={Springer}
}

@article{kuhn2025distributionally,
  title={Distributionally robust optimization},
  author={Kuhn, Daniel and Shafiee, Soroosh and Wiesemann, Wolfram},
  journal={Acta Numerica},
  volume={34},
  pages={579--804},
  year={2025},
  publisher={Cambridge University Press}
}

@article{shafiee2025nash,
  title={Nash Equilibria, Regularization, and Computation in Optimal Transport-Based Distributionally Robust Optimization},
  author={Shafiee, Soroosh and Aolaritei, Liviu and D{\"o}rfler, Florian and Kuhn, Daniel},
  journal={Operations Research},
  number={3},
  volume={74},
  pages={1689--1709},
  year={2026}
}

@article{aigner2023data,
    title={Data-driven distributionally robust optimization over time},
    author={Aigner, Kevin-Martin and B{\"a}rmann, Andreas and Braun, Kristin and Liers, Frauke and Pokutta, Sebastian and Schneider, Oskar and Sharma, Kartikey and Tschuppik, Sebastian},
    journal={INFORMS Journal on Optimization},
    volume={5},
    number={4},
    pages={376--394},
    year={2023},
    publisher={INFORMS}
}

@inproceedings{qi2021online,
    title={An online method for a class of distributionally robust optimization with non-convex objectives},
    author={Qi, Qi and Guo, Zhishuai and Xu, Yi and Jin, Rong and Yang, Tianbao},
    booktitle={Advances in Neural Information Processing Systems},
    pages={10067--10080},
    year={2021}
}

@article{ho2018online,
  title={Online first-order framework for robust convex optimization},
  author={Ho-Nguyen, Nam and K{\i}l{\i}n{\c{c}}-Karzan, Fatma},
  journal={Operations Research},
  volume={66},
  number={6},
  pages={1670--1692},
  year={2018},
  publisher={INFORMS}
}

@article{ho2019exploiting,
  title={Exploiting problem structure in optimization under uncertainty via online convex optimization},
  author={Ho-Nguyen, Nam and K{\i}l{\i}n{\c{c}}-Karzan, Fatma},
  journal={Mathematical Programming},
  volume={177},
  number={1},
  pages={113--147},
  year={2019},
  publisher={Springer}
}

@article{postek2024first,
  title={First-order algorithms for robust optimization problems via convex-concave saddle-point Lagrangian reformulation},
  author={Postek, Krzysztof and Shtern, Shimrit},
  journal={INFORMS Journal on Computing},
  volume={37},
  number={3},
  pages={557--581},
  year={2025},
  publisher={INFORMS}
}

@article{hazan2016introduction,
  title={Introduction to online convex optimization},
  author={Hazan, Elad},
  journal={Foundations and Trends in Optimization},
  volume={2},
  number={3-4},
  pages={157--325},
  year={2016},
  publisher={Emerald Publishing Limited}
}

@book{hazan2022introduction,
  title={Introduction to Online Convex Optimization},
  author={Hazan, Elad},
  year={2022},
  publisher={MIT Press}
}

@article{tu2024max,
  title={A Max-Min-Max Algorithm for Large-Scale Robust Optimization},
  author={Tu, Kai and Chen, Zhi and Yue, Man-Chung},
  journal={arXiv:2404.05377},
  year={2024}
}

@inproceedings{namkoong2016stochastic,
    title={Stochastic gradient methods for distributionally robust optimization with $f$-divergences},
    author={Namkoong, Hongseok and Duchi, John C},
    booktitle={Advances in Neural Information Processing Systems},
    pages={2216--2224},
    year={2016}
}

@article{nemirovski2009robust,
  title={Robust stochastic approximation approach to stochastic programming},
  author={Nemirovski, Arkadi and Juditsky, Anatoli and Lan, Guanghui and Shapiro, Alexander},
  journal={SIAM Journal on Optimization},
  volume={19},
  number={4},
  pages={1574--1609},
  year={2009},
  publisher={SIAM}
}

@book{rockafellar1998variational,
	title={Variational Analysis},
	author={Rockafellar, R Tyrrell and Wets, Roger JB},
	year={1998},
	publisher={Springer}
}

@article{nguyen2023bridging,
  title={Bridging {B}ayesian and minimax mean square error estimation via {W}asserstein distributionally robust optimization},
  author={Nguyen, Viet Anh and Shafieezadeh-Abadeh, Soroosh and Kuhn, Daniel and Mohajerin Esfahani, Peyman},
  journal={Mathematics of Operations Research},
  volume={48},
  number={1},
  pages={1--37},
  year={2023},
  publisher={INFORMS}
}

@article{shafieezadeh2019regularization,
    title={Regularization via Mass Transportation},
    author={Shafieezadeh-Abadeh, Soroosh and Kuhn, Daniel and Mohajerin Esfahani, Peyman},
    journal={Journal of Machine Learning Research},
    volume={20},
    number={103},
    pages={1--68},
    year={2019}
}

@inproceedings{shafieezadeh2018wasserstein,
    title={Wasserstein distributionally robust {K}alman filtering},
    author={Shafieezadeh-Abadeh, Soroosh and Nguyen, Viet Anh and Kuhn, Daniel and Mohajerin Esfahani, Peyman},
    booktitle={Advances in Neural Information Processing Systems},
    pages={8474--8483},
    year={2018}
}

@article{tacskesen2023semi,
  title={Semi-discrete optimal transport: Hardness, regularization and numerical solution},
  author={Ta{\c{s}}kesen, Bahar and Shafieezadeh-Abadeh, Soroosh and Kuhn, Daniel},
  journal={Mathematical Programming},
  volume={199},
  number={1},
  pages={1033--1106},
  year={2023},
  publisher={Springer}
}

@article{tacskesen2023discrete,
    title={Discrete optimal transport with independent marginals is \#{P}-hard},
    author={Ta{\c{s}}kesen, Bahar and Shafieezadeh-Abadeh, Soroosh and Kuhn, Daniel and Natarajan, Karthik},
    journal={SIAM Journal on Optimization},
    volume={33},
    number={2},
    pages={589--614},
    year={2023},
    publisher={SIAM}
}

@article{wozabal2012framework,
    title={A framework for optimization under ambiguity},
    author={Wozabal, David},
    journal={Annals of Operations Research},
    volume={193},
    number={1},
    pages={21--47},
    year={2012},
    publisher={Springer}
}

@article{zhen2023unified,
  title={A unified theory of robust and distributionally robust optimization via the primal-worst-equals-dual-best principle},
  author={Zhen, Jianzhe and Kuhn, Daniel and Wiesemann, Wolfram},
  journal={Operations Research},
  volume={73},
  number={2},
  pages={862--878},
  year={2025},
  publisher={INFORMS}
}

@article{blanchet2018optimal,
    title={Optimal transport-based distributionally robust optimization: Structural properties and iterative schemes},
    author={Blanchet, Jose and Murthy, Karthyek and Zhang, Fan},
    journal={Mathematics of Operations Research},
    volume={47},
    number={2},
    pages={1500--1529},
    year={2022}
}

@inproceedings{sinha2018certifying,
    title={Certifying Some Distributional Robustness with Principled Adversarial Training},
    author={Sinha, Aman and Namkoong, Hongseok and Duchi, John},
    booktitle={International Conference on Learning Representations},
    year={2018}
}

@article{gao2023distributionally,
  title={Distributionally robust stochastic optimization with {W}asserstein distance},
  author={Gao, Rui and Kleywegt, Anton},
  journal={Mathematics of Operations Research},
  volume={48},
  number={2},
  pages={603--655},
  year={2023},
  publisher={INFORMS}
}

@inproceedings{li2019first,
    title={A First-Order Algorithmic Framework for {W}asserstein Distributionally Robust Logistic Regression},
    author={Li, Jiajin and Huang, Sen and So, Anthony Man-Cho},
    booktitle={Advances in Neural Information Processing Systems},
    pages={3937--3947},
    year={2019}
}

@inproceedings{li2020fast,
    title={Fast epigraphical projection-based incremental algorithms for {W}asserstein distributionally robust support vector machine},
    author={Li, Jiajin and Chen, Caihua and So, Anthony Man-Cho},
    booktitle={Advances in Neural Information Processing Systems},
    pages={4029--4039},
    year={2020}
}

@article{cherukuri2020cooperative,
  title={Cooperative data-driven distributionally robust optimization},
  author={Cherukuri, Ashish and Cort{\'e}s, Jorge},
  journal={IEEE Transactions on Automatic Control},
  volume={65},
  number={10},
  pages={4400--4407},
  year={2019},
  publisher={IEEE}
}

@article{li2020data,
    title={Data assimilation and online optimization with performance guarantees},
    author={Li, Dan and Mart{\'\i}nez, Sonia},
    journal={IEEE Transactions on Automatic Control},
    volume={66},
    number={5},
    pages={2115--2129},
    year={2020}
}

@book{rockafellar1970convex,
    title={Convex Analysis},
    author={Rockafellar, R Tyrrell},
    year={1970},
    publisher={Princeton University Press}
}

@article{parikh2014proximal,
    title={Proximal Algorithms},
    author={Parikh, Neal and Boyd, Stephen},
    journal={Foundations and Trends in Optimization},
    volume={1},
    number={3},
    pages={127--239},
    year={2014},
    publisher={Now Publishers}
}

@book{beck2017first,
    title={First-Order Methods in Optimization},
    author={Beck, Amir},
    year={2017},
    publisher={SIAM}
}

@article{sion1958general,
    title={On general minimax theorems},
    author={Sion, Maurice},
    journal={Pacific Journal of Mathematics},
    volume={8},
    number={1},
    pages={171--176},
    year={1958}
}

@article{zorzi2016robust,
    title={Robust {K}alman filtering under model perturbations},
    author={Zorzi, Mattia},
    journal={IEEE Transactions on Automatic Control},
    volume={62},
    number={6},
    pages={2902--2907},
    year={2016},
    publisher={IEEE}
}

@article{zorzi2017robustness,
    title={On the robustness of the {B}ayes and {W}iener estimators under model uncertainty},
    author={Zorzi, Mattia},
    journal={Automatica},
    volume={83},
    pages={133--140},
    year={2017},
    publisher={Elsevier}
}

@article{levy2004robust,
    title={Robust least-squares estimation with a relative entropy constraint},
    author={Levy, Bernard C and Nikoukhah, Ramine},
    journal={IEEE Transactions on Information Theory},
    volume={50},
    number={1},
    pages={89--104},
    year={2004},
    publisher={IEEE}
}

@article{levy2012robust,
    title={Robust state space filtering under incremental model perturbations subject to a relative entropy tolerance},
    author={Levy, Bernard C and Nikoukhah, Ramine},
    journal={IEEE Transactions on Automatic Control},
    volume={58},
    number={3},
    pages={682--695},
    year={2012},
    publisher={IEEE}
}

@article{owhadi2017extreme,
    title={Extreme points of a ball about a measure with finite support},
    author={Owhadi, H and Scovel, C},
    journal={Communications in Mathematical Sciences},
    volume={15},
    number={1},
    pages={77--96},
    year={2017},
    publisher={International Press}
}

@article{tacskesen2025optimality,
  title={Optimality of Linear Policies in Distributionally Robust Linear Quadratic Control},
  author={Ta{\c{s}}kesen, Bahar and Iancu, Dan A and Ko{\c{c}}yi{\u{g}}it, {\c{C}}a{\u{g}}{\i}l and Kuhn, Daniel},
  journal={arXiv:2508.11858},
  year={2025}
}

@article{giang2026projection,
  title={Projection-Free Algorithms for Minimax Problems},
  author={Giang-Tran, Khanh-Hung and Shafiee, Soroosh and Ho-Nguyen, Nam},
  journal={arXiv:2603.29870},
  year={2026}
}

@inproceedings{boroun2023projection,
  title={Projection-free methods for solving nonconvex-concave saddle point problems},
  author={Boroun, Morteza and Yazdandoost Hamedani, Erfan and Jalilzadeh, Afrooz},
  booktitle={Advances in Neural Information Processing Systems},
  pages={53844--53856},
  year={2023}
}

@article{xu2023unified,
  title={A unified single-loop alternating gradient projection algorithm for nonconvex-concave and convex-nonconcave minimax problems},
  author={Xu, Zi and Zhang, Huiling and Xu, Yang and Lan, Guanghui},
  journal={Mathematical Programming},
  volume={201},
  pages={635--706},
  year={2023}
}

@article{nedic2009subgradient,
  title={Subgradient methods for saddle-point problems},
  author={Nedi{\'c}, Angelia and Ozdaglar, Asuman},
  journal={Journal of Optimization Theory and Applications},
  volume={142},
  number={1},
  pages={205--228},
  year={2009},
  publisher={Springer}
}

@article{beck2009duality,
  title={Duality in robust optimization: primal worst equals dual best},
  author={Beck, Amir and Ben-Tal, Aharon},
  journal={Operations Research Letters},
  volume={37},
  number={1},
  pages={1--6},
  year={2009},
  publisher={Elsevier}
}

@article{everett1963generalized,
  title={Generalized {L}agrange multiplier method for solving problems of optimum allocation of resources},
  author={Everett III, Hugh},
  journal={Operations Research},
  volume={11},
  number={3},
  pages={399--417},
  year={1963}
}

@book{nesterov1994interior,
  title={Interior-Point Polynomial Algorithms in Convex Programming},
  author={Nesterov, Yurii and Nemirovskii, Arkadii},
  year={1994},
  publisher={SIAM}
}

@article{wang2025sinkhorn,
  title={Sinkhorn distributionally robust optimization},
  author={Wang, Jie and Gao, Rui and Xie, Yao},
  journal={Operations Research},
  volume={74},
  number={3},
  pages={1581--1603},
  year={2026}
}

@article{azizian2023regularization,
  title={Regularization for {W}asserstein distributionally robust optimization},
  author={Azizian, Wa{\"\i}ss and Iutzeler, Franck and Malick, J{\'e}r{\^o}me},
  journal={ESAIM: Control, Optimisation and Calculus of Variations},
  volume={29},
  pages={33},
  year={2023},
  publisher={EDP Sciences}
}

@article{xu2024flow,
  title={Flow-based distributionally robust optimization},
  author={Xu, Chen and Lee, Jonghyeok and Cheng, Xiuyuan and Xie, Yao},
  journal={IEEE Journal on Selected Areas in Information Theory},
  volume={5},
  pages={62--77},
  year={2024},
  publisher={IEEE}
}

@article{wang2024regularization,
  title={Regularization for adversarial robust learning},
  author={Wang, Jie and Gao, Rui and Xie, Yao},
  journal={arXiv:2408.09672},
  year={2024}
}

@article{wang2025iterative,
  title={Iterative Sampling Methods for Sinkhorn Distributionally Robust Optimization},
  author={Wang, Jie},
  journal={arXiv:2512.12550},
  year={2025}
}

@article{vincent2024texttt,
  title={\texttt{skwdro}: a library for {W}asserstein distributionally robust machine learning},
  author={Vincent, Florian and Azizian, Wa{\"\i}ss and Iutzeler, Franck and Malick, J{\'e}r{\^o}me},
  journal={arXiv:2410.21231},
  year={2024}
}

@inproceedings{liu2025convergence,
  title={Convergence of Mean-Field Langevin Stochastic Descent-Ascent for Distributional Minimax Optimization},
  author={Liu, Zhangyi and Liu, Feng and Gao, Rui and Li, Shuang},
  booktitle={International Conference on Machine Learning},
  pages={38869--38893},
  year={2025}
}

@article{sheriff2025nonlinear,
  title={Nonlinear distributionally robust optimization},
  author={Sheriff, Mohammed Rayyan and Mohajerin Esfahani, Peyman},
  journal={Mathematical Programming},
  volume={213},
  number={1},
  pages={639--698},
  year={2025},
  publisher={Springer}
}

@inproceedings{kent2021modified,
  title={Modified {F}rank {W}olfe in probability space},
  author={Kent, Carson and Li, Jiajin and Blanchet, Jose and Glynn, Peter W},
  booktitle={Advances in Neural Information Processing Systems},
  pages={14448--14462},
  year={2021}
}

@article{yu2025deterministic,
  title={Deterministic and Stochastic {F}rank-{W}olfe Recursion on Probability Spaces},
  author={Yu, Di and Henderson, Shane G and Pasupathy, Raghu},
  journal={Mathematics of Operations Research (forthcoming)},
  year={2025},
  publisher={INFORMS}
}

@article{eftekhari2019sparse,
  title={Sparse inverse problems over measures: equivalence of the conditional gradient and exchange methods},
  author={Eftekhari, Armin and Thompson, Andrew},
  journal={SIAM Journal on Optimization},
  volume={29},
  number={2},
  pages={1329--1349},
  year={2019},
  publisher={SIAM}
}

@article{chizat2022sparse,
  title={Sparse optimization on measures with over-parameterized gradient descent},
  author={Chizat, Lenaic},
  journal={Mathematical Programming},
  volume={194},
  number={1},
  pages={487--532},
  year={2022},
  publisher={Springer}
}

@inproceedings{chizat2018global,
  title={On the global convergence of gradient descent for over-parameterized models using optimal transport},
  author={Chizat, Lenaic and Bach, Francis},
  booktitle={Advances in Neural Information Processing Systems},
  pages={3040--3050},
  year={2018}
}

@article{lanzetti2024variational,
  title={Variational analysis in the {W}asserstein space},
  author={Lanzetti, Nicolas and Terpin, Antonio and D{\"o}rfler, Florian},
  journal={arXiv:2406.10676},
  year={2024}
}

@article{lanzetti2022first,
  title={First-order conditions for optimization in the {W}asserstein space},
  author={Lanzetti, Nicolas and Bolognani, Saverio and D{\"o}rfler, Florian},
  journal={SIAM Journal on Mathematics of Data Science},
  volume={7},
  number={1},
  pages={274--300},
  year={2025},
  publisher={SIAM}
}

@book{bertsimas1997introduction,
  title={Introduction to Linear Optimization},
  author={Bertsimas, Dimitris and Tsitsiklis, John N},
  year={1997},
  publisher={Athena Scientific}
}

@inproceedings{taskesen2023distributionally,
  title={Distributionally robust linear quadratic control},
  author={Ta{\c{s}}kesen, Bahar and Iancu, Dan and Ko{\c{c}}yi{\u{g}}it, {\c{C}}a{\u{g}}{\i}l and Kuhn, Daniel},
  booktitle={Advances in Neural Information Processing Systems},
  pages={18613--18632},
  year={2023}
}

@inproceedings{abernethy2008optimal,
  title={Optimal strategies and minimax lower bounds for online convex games},
  author={Abernethy, Jacob and Bartlett, Peter L and Rakhlin, Alexander and Tewari, Ambuj},
  booktitle={Conference on Learning Theory},
  year={2008}
}


\appendix

\section{Omitted Proofs}

\subsection{Proof of Lemma~\ref{lem::existence_saddle}}
\label{proof::lem::existence_saddle}
We first show that the optimal values are finite. Fix $x \in \cX$. Since each function $\ell_k(x, \cdot)$ is real-valued and concave, it admits an affine upper bound. Since $K$ is finite, there exist constants $a_x, b_x < \infty$ and a point $\hat{z}_0 \in \cZ$ such that
\begin{align*}
    \ell(x, z)
    \leq
    a_x + b_x \| z - \hat{z}_0 \|
    \qquad
    \forall z \in \cZ.
\end{align*}
For any $\bQ \in \cP$, let $\gamma \in \Gamma(\bQ, \hat{\bP})$ be a coupling satisfying $\EE_{(z, \hat{z}) \sim \gamma}[c(z, \hat{z})] \leq \rho$.
By Assumption~\ref{asp::regularity}\ref{asp::regular::c}, we have $\EE_{(z, \hat{z}) \sim \gamma} [\| z - \hat{z} \|^p] \leq \rho$.
Therefore, for any $\bQ \in \cP$, we may conclude that
\begin{align*}
    \EE_{z \sim \bQ}[\ell(x, z)]
    \leq 
    a_x + b_x \, \EE_{z \sim \bQ}
    \left[
        \| z - \hat{z}_0 \|
    \right]
    &\leq
    a_x + b_x \, \EE_{(z, \hat{z}) \sim \gamma}
    \left[
        \| z - \hat{z} \|
    \right]
    +
    b_x \, \EE_{\hat{z} \sim \hat{\bP}}
    \left[
        \| \hat{z} - \hat{z}_0 \|
    \right] \\
    &\leq
    a_x + b_x \, \rho^{1/p}
    +
    \frac{b_x}{N}
    \sum_{i = 1}^{N}
    \| \hat{z}_i - \hat{z}_0 \|.
\end{align*}
Thus, $\sup_{\bQ \in \cP} \EE_{z \sim \bQ}[\ell(x, z)] < \infty$ for every $x \in \cX$.

To bound the problem from below, we leverage the inf-compactness assumption. By Assumption~\ref{asp::regularity}\ref{asp::regular::sets}, the function $\EE_{z \sim \bQ_0}[\ell(\cdot, z)]$ is inf-compact on $\cX$, and the scalar $c_0 := \EE_{z \sim \bQ_0}[\ell(x_0, z)]$ is finite. By definition of inf-compactness, the corresponding sublevel set $\cS_0 = \{x \in \cX : \EE_{z \sim \bQ_0}[\ell(x, z)] \leq c_0\}$ is compact in $\RR^n$ and nonempty since $x_0 \in \cS_0$. Because all sublevel sets of $\EE_{z \sim \bQ_0}[\ell(\cdot, z)]$ are closed, this function is lower semicontinuous on $\cX$. By the generalized Weierstrass theorem, a lower semicontinuous function on a nonempty compact set attains its minimum; thus, it attains a finite minimum $m_0 > -\infty$ on $\cS_0$. Since any $x \in \cX \setminus \cS_0$ strictly yields an objective value greater than $c_0 \geq m_0$, $m_0$ is the global minimum over the entire feasible region $\cX$. Since $\bQ_0 \in \cP$, we have
\begin{align*}
    \sup_{\bQ \in \cP} \EE_{z \sim \bQ}[\ell(x, z)] 
    \geq 
    \EE_{z \sim \bQ_0}[\ell(x, z)] 
    \geq 
    m_0 
    > 
    -\infty, \quad \text{for every }x \in \cX.
\end{align*}
This proves that the primal DRO value is finite. The dual DRO value is finite as well. To see this, first note that weak duality gives
\begin{align*}
    \sup_{\bQ \in \cP}
    \inf_{x \in \cX}
    \EE_{z \sim \bQ}[\ell(x, z)]
    \leq
    \inf_{x \in \cX}
    \sup_{\bQ \in \cP}
    \EE_{z \sim \bQ}[\ell(x, z)]
    <
    \infty,
\end{align*}
while the feasible distribution $\bQ_0 \in \cP$ gives
\begin{align*}
    \sup_{\bQ \in \cP}
    \inf_{x \in \cX}
    \EE_{z \sim \bQ}[\ell(x, z)]
    \geq
    \inf_{x \in \cX}
    \EE_{z \sim \bQ_0}[\ell(x, z)]
    \geq 
    m_0
    >
    -\infty.
\end{align*}

It remains to establish attainment and the minimax identity. Assumption~\ref{asp::regularity}\ref{asp::regular::sets} gives a nonempty inf-compact convex decision set. Assumption~\ref{asp::regularity}\ref{asp::regular::ell} gives convexity in $x$ and upper semicontinuity in~$z$. Assumption~\ref{asp::regularity}\ref{asp::regular::c} guarantees tightness of the OT ambiguity set and the required upper-growth condition. Specifically, for $p > 1$, the superlinear transportation cost dominates the linear growth of the loss; for $p = 1$, the conclusion follows from the stated sublinear growth condition. Moreover, since $c$ is real-valued on $\cZ \times \cZ$ and $\rho > 0$, the Slater conditions required in~\cite[Assumption~7]{shafiee2025nash} are satisfied.
Therefore, the requirements of the minimax result in~\cite[Lemmas~3 \& 4]{shafiee2025nash} hold. Consequently, the primal and dual DRO problems have the same finite optimal value, both extrema are attained, and the corresponding optimizers form a saddle point. $\hfill\qed$


\subsection{Proofs of Lemma~\ref{lem::eval_approx} and Lemma~\ref{lem::evaluate_vi}}
\label{proof::lemma2and3}

To analyze the evaluation of \(V_i^{(k_1,k_2)}(b)\) in~\eqref{eq::U_i^k}, we first fix \(b \in \RR_+\). We then introduce the function \(\Psi_i^{(k_1,k_2)}(\alpha,\beta)\), which denotes the optimal value of~\eqref{eq::U_i^k} over \(v_1\) and \(v_2\), for fixed weights \(\alpha=(\alpha_1,\alpha_2)\) and budget allocations \(\beta=(\beta_1,\beta_2)\). Specifically, we define
\begin{equation}
\label{eq::Psi}
\Psi_i^{(k_1,k_2)}(\alpha, \beta) := 
\max_{v_{1} , v_{2}} \left\{ 
\alpha_{1} \ell_{k_1}\left(\hat{z}_i + v_1 \right) + \alpha_{2} \ell_{k_2} \left(\hat{z}_i + v_2 \right):\ 
\begin{aligned}
&v_j \in \RR^m,\ \hat{z}_i + v_j\in \cZ, ~ \forall j \in [2] \\
&\alpha_j c\left(\hat{z}_i+v_j, \hat{z}_i\right) \leq \beta_j, ~ \forall j \in [2]
\end{aligned} \right\}.
\end{equation}

For any index $k \in [K]$, nominal point $\hat{z} \in \mathcal{Z}$, and radius $u \geq 0$, we define the function $S_k(\cdot\,; \hat{z}): \mathbb{R}_+ \rightarrow \mathbb{R}$ as
\begin{equation}
\label{eq::inner_oracle}
S_k(u; \hat{z}) := \max
\left\{ \ell_k(\hat z+v) : v \in \bR^m, \ \hat z+v\in\cZ, \
c(\hat z+v,\hat z) \leq u \right\}.    
\end{equation}
We note that Assumption~\ref{asp::inner_oracle} provides an approximate oracle for computing $S_k(u; \hat{z})$.
A direct consequence of this formulation is that $S_k(0 ; \hat{z}) = \ell_k(\hat{z})$, and $S_k(u; \hat{z})$ is nondecreasing in $u$. Furthermore, $S_k(\cdot\,; \hat{z})$ is a concave function. To see this, note that $\left\{ (v,u)\in \bR^m\times \bR: \ \hat z+v\in\cZ, \
c(\hat z+v,\hat z) \leq u \right\}$ is a convex set and $\ell_k(\hat z+v)$ is jointly concave with respect to $(v,u)$. Therefore, the concavity of $S_k(\cdot\,; \hat{z})$ directly follows from \cite[Section 3.2.5]{boyd2004convex}.  
Armed with this function, for weights $\alpha \in \{(\alpha_1, \alpha_2) \in \mathbb{R}_{++}^2 : \alpha_1 + \alpha_2 = 1\}$ and budget allocations $\beta \in \{(\beta_1, \beta_2) \in \mathbb{R}_{+}^2 : \beta_1 + \beta_2 = b\}$, we can rewrite $\Psi_i^{(k_1,k_2)}(\alpha, \beta)$ as
\begin{equation}
\label{eq::perspective_decomposition}
\Psi_i^{(k_1,k_2)}(\alpha, \beta) = \alpha_1 S_{k_1}\left(\frac{\beta_1}{\alpha_1}; \hat{z}_i \right) + \alpha_2 S_{k_2}\left(\frac{\beta_2}{\alpha_2}; \hat{z}_i \right).    
\end{equation}
The following lemma establishes the nested concavity of $\Psi_i^{(k_1,k_2)}(\alpha, \beta)$.

\begin{lemma}
\label{lem::dgss}
Fix any $i\in [N]$, $1\leq k_1 < k_2 \leq K$, and $b \in \RR_+$. The function $\Psi_i^{(k_1,k_2)}(\alpha, \beta)$ exhibits nested concavity over $(\alpha, \beta)$:
\begin{enumerate}
    \item For fixed $\alpha$, it is concave in the budget allocation $\beta$.
    \item The partial maximum $\Phi_i^{(k_1,k_2)}(\alpha) := \max_{\beta} \Psi_i^{(k_1,k_2)}(\alpha, \beta)$ is concave in the weight allocation $\alpha$.
\end{enumerate}
Here we restrict $\alpha \in \{(\alpha_1 , \alpha_2): \alpha_1 + \alpha_2 = 1, \alpha_1,\alpha_2 \in \RR_{++} \}$, $\beta \in \{(\beta_1 , \beta_2): \beta_1 + \beta_2 = b, \beta_1,\beta_2 \in \RR_+ \}$.
\end{lemma}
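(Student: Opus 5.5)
The plan is to work directly from the perspective decomposition \eqref{eq::perspective_decomposition}, using the concavity and monotonicity of $S_k(\cdot\,;\hat z_i)$ established just above.

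\emph{Part (i).} Fix $\alpha$ with $\alpha_1,\alpha_2>0$. Then each map $\beta_j\mapsto \alpha_j S_{k_j}(\beta_j/\alpha_j;\hat z_i)$ is a concave function composed with an affine map, scaled by a positive constant, so it is concave. Parametrize $\beta=(\beta_1,b-\beta_1)$ with $\beta_1\in[0,b]$. Then $\Psi_i^{(k_1,k_2)}(\alpha,\cdot)$ is the sum of a concave function of $\beta_1$ and a concave function of $b-\beta_1$. Concavity is preserved under affine reparametrization and under sums, so part (i) follows.

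\emph{Part (ii).} The key observation is that the perspective $g_k(\alpha,\beta):=\alpha S_k(\beta/\alpha;\hat z_i)$ is jointly concave in $(\alpha,\beta)\in\RR_{++}\times\RR_+$, because it is the perspective of a concave function. I would verify this directly. For two points $(\alpha,\beta),(\alpha',\beta')$ and $\theta\in[0,1]$, write
\[
\frac{\theta\beta+(1-\theta)\beta'}{\theta\alpha+(1-\theta)\alpha'}
\]
as a convex combination of $\beta/\alpha$ and $\beta'/\alpha'$, with weights $\theta\alpha/(\theta\alpha+(1-\theta)\alpha')$ and $(1-\theta)\alpha'/(\theta\alpha+(1-\theta)\alpha')$. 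Applying concavity of $S_k$ and multiplying through by $\theta\alpha+(1-\theta)\alpha'$ gives the inequality. Alternatively, I could cite \cite[Section 3.2.6]{boyd2004convex}. It follows that
\[
\Psi_i^{(k_1,k_2)}(\alpha,\beta)=g_{k_1}(\alpha_1,\beta_1)+g_{k_2}(1-\alpha_1,b-\beta_1)
\]
is jointly concave in $(\alpha_1,\beta_1)$ on the convex set $(0,1)\times[0,b]$.

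To finish, I would invoke the standard fact that partial maximization of a jointly concave function over a convex set preserves concavity \cite[Section 3.2.5]{boyd2004convex}. Concretely, take near-maximizers $\beta$ for $\alpha$ and $\beta'$ for $\alpha'$. Their convex combination is feasible for the combined $\alpha$, since the fiber $[0,b]$ does not depend on $\alpha$, and joint concavity then gives
\[
\Phi(\theta\alpha+(1-\theta)\alpha')\ge \theta\Phi(\alpha)+(1-\theta)\Phi(\alpha').
\]
The maximum over $\beta$ is attained because $S_k$ is finite and concave, hence continuous on the interior, and the domain is compact. Even without attainment the supremum argument goes through.

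\emph{Main obstacle.} The only delicate point is joint concavity of the perspective, in particular finiteness of $S_k$ and its behavior at the boundary $\beta_j=0$, where $S_k(0)=\ell_k(\hat z_i)$. Finiteness of $S_k(u;\hat z_i)$ follows from the growth conditions in Assumption~\ref{asp::regularity}\ref{asp::regular::c}, as in the proof of Lemma~\ref{lem::existence_saddle}. Since $\alpha$ is restricted to $\RR_{++}$, no recession-function case is needed.
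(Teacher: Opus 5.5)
Your proposal is correct and follows the paper's route: you use the perspective decomposition \eqref{eq::perspective_decomposition}, joint concavity of the perspective of the concave function $S_k(\cdot\,;\hat z_i)$, and the fact that partial maximization over the $\alpha$-independent fiber preserves concavity \cite[Section 3.2.5]{boyd2004convex}; the only differences are that you verify the perspective inequality by hand and derive part (i) directly by composition with an affine map instead of reading it off from joint concavity. One small caveat is that concavity plus interior continuity on a compact interval does not by itself give attainment at the endpoints $\beta_1\in\{0,b\}$; you also need continuity of $S_k$ at $u=0$, which follows from monotonicity together with the bound $S_k(u)-S_k(0)\le G_i u^{1/p}$. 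As you note, however, the supremum argument for concavity does not need attainment.
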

\begin{proof}
Recall the decomposition established in~\eqref{eq::perspective_decomposition}. By construction, \(\Psi_i^{(k_1,k_2)}(\alpha,\beta)\) can be expressed as the sum of two perspective functions. Since the perspective operation preserves concavity, it follows that \(\Psi_i^{(k_1,k_2)}(\alpha,\beta)\) is jointly concave in \((\alpha,\beta)\) over its domain, which establishes the first statement. The second statement follows directly from~\cite[Section 3.2.5]{boyd2004convex}.
\end{proof}

The following helper lemma shows that any one-dimensional concave function satisfies the fundamental subadditivity property.

\begin{lemma}
\label{lem::subadditivity}
Let $h: \RR_+ \to \RR$ be a concave function, then for any $0\leq y \leq x$, we have
\[
 h(x) - h(y) \leq h\left( x-y \right) - h(0).
\]
\end{lemma}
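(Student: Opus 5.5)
The plan is to reduce the inequality to the standard monotonicity of difference quotients (slopes) of a concave function. The case $y=0$ is trivial, since then both sides equal $h(x)-h(0)$. The case $x=y$ is also immediate: the left side is $0$ and the right side is $h(0)-h(0)=0$. So assume $0<y<x$ and set $d:=x-y\in(0,x)$. The goal is then $h(y+d)-h(y)\le h(d)-h(0)$, which says that the increment of $h$ over an interval of length $d$ does not increase as the interval moves to the right.

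To show this, I would use the three-slope inequality for concave functions. For points $a<b<c$ in $\RR_+$, concavity gives
\[
\frac{h(b)-h(a)}{b-a}\;\ge\;\frac{h(c)-h(a)}{c-a}\;\ge\;\frac{h(c)-h(b)}{c-b}.
\]
This follows by writing $b=\tfrac{c-b}{c-a}a+\tfrac{b-a}{c-a}c$ and applying the definition of concavity. A consequence is that the secant slope $s(u,w):=(h(w)-h(u))/(w-u)$ is nonincreasing in each endpoint separately.

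I would then compare the intervals $[0,d]$ and $[y,y+d]$, both of length $d$, with $0\le y$ and $d\le y+d$. Moving the left endpoint first gives $s(0,d)\ge s(y',d)$ for suitable intermediate points; equivalently, the single chain $s(0,d)\ge s(0,y+d)\ge s(d,y+d)$ for the comparable case works. To avoid case-splitting on whether $d\le y$ or $d>y$, the cleanest route is a convex-combination argument. Write $d=t\cdot 0+(1-t)x$ and $y=(1-t)\cdot 0+t x$ with $t=y/x\in(0,1)$, so that $d=(1-t)x$. Concavity then gives
\[
h(d)\ge t\,h(0)+(1-t)h(x),\qquad h(y)\ge (1-t)h(0)+t\,h(x).
\]
Adding these two inequalities yields $h(d)+h(y)\ge h(0)+h(x)$, which rearranges to exactly the claim.

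There is no real obstacle; the convex-combination route is essentially self-contained. The only care needed is to confirm that the endpoints $0$ and $x$ lie in the domain $\RR_+$ and that the coefficients $t$ and $1-t$ are valid, which holds because $0\le y\le x$ and the degenerate cases were handled separately.
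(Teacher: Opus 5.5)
Your convex-combination argument is correct and is essentially the paper's proof: the paper writes the same two concavity inequalities $h(a)\ge \frac{a}{a+b}h(a+b)+\frac{b}{a+b}h(0)$ and $h(b)\ge \frac{b}{a+b}h(a+b)+\frac{a}{a+b}h(0)$ with $a=x-y$, $b=y$, and sums them. The three-slope detour is unnecessary; as written it is also off target, since the chain you give compares $[0,d]$ with $[d,y+d]$ rather than with $[y,y+d]$, so you can drop it.
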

\begin{proof}
Since \(h(\cdot)\) is concave, Jensen's inequality implies that, for any \(a,b \geq 0\),
\begin{align*}
h(a) &\geq \frac{a}{a+b} h(a+b) + \frac{b}{a+b} h(0),\\
h(b) &\geq \frac{b}{a+b} h(a+b) +\frac{a}{a+b} h(0).
\end{align*}
Summing the two inequalities yields
\[
h(a)+h(b)\geq h(a+b)+h(0).
\]
Now, for any \(0 \leq y \leq x\), let \(a=x-y\) and \(b=y\). Then,
\[
h(x-y)+h(y)\geq h(x)+h(0) \implies h(x)-h(y)\leq h(x-y)-h(0).
\]
This completes the proof.
\end{proof}

In the subsequent analysis, let
$G_i$
denote the largest dual norm among all supergradients of \(\ell_k\) at \(\hat z_i\). Since each function \(\ell_k\) is locally Lipschitz continuous at \(\hat z_i\), it follows that \(G_i\) is finite.
The following lemma characterizes the continuity of $S_k(\cdot\,; \hat{z}_i)$ at the origin.

\begin{lemma}
\label{lem::continuity_subroutine}
Suppose Assumptions~\ref{asp::regularity} and~\ref{asp::inner_oracle} hold. Fix any $k\in [K]$ and $i\in [N]$. For any $u \geq 0$, we have
$$
S_k(u; \hat{z}_i) - S_k(0; \hat{z}_i) \leq 
\left\{
\begin{aligned}
& C_i + g u^{r} && p = 1 \\
& G_i u^{1/p} && p \geq 1
\end{aligned}
\right.,
$$
where $C_i := \max_{k \in [K]} \big\{ g + g\norm{\hat{z}_i - \hat{z}_0}^r - \ell_k(\hat{z}_i) \big\}$ and $G_i := \max_{k \in [K]} \big\{ \big\| g_i^{(k)} \big\|_* : g_i^{(k)} \in \partial \ell_k(\hat z_i) \big\}$ are constants; the constant $g>0$, the reference point $\hat{z}_0$ and the exponent $r \in (0,1)$ are specified in Assumption~\ref{asp::regularity}\ref{asp::regular::c}.
\end{lemma}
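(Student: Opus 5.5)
Both bounds compare $S_k(u;\hat z_i)=\max\{\ell_k(\hat z_i+v): \hat z_i+v\in\cZ,\ c(\hat z_i+v,\hat z_i)\le u\}$ with $S_k(0;\hat z_i)=\ell_k(\hat z_i)$, one feasible perturbation $v$ at a time. The plan is to fix any feasible $v$ with $\hat z_i+v\in\cZ$ and $c(\hat z_i+v,\hat z_i)\le u$, bound $\ell_k(\hat z_i+v)-\ell_k(\hat z_i)$ by a quantity independent of $v$, and then take the maximum over $v$. Throughout we use the standing consequence of Assumption~\ref{asp::regularity}\ref{asp::regular::c}: every such $v$ satisfies $\|v\|^p\le c(\hat z_i+v,\hat z_i)\le u$, hence $\|v\|\le u^{1/p}$.

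\emph{Lipschitz-type bound (any $p\ge 1$).} Since $\ell_k$ is real-valued and concave on $\bR^m$, it has a nonempty supergradient set at $\hat z_i$. Pick any $g_i^{(k)}\in\partial\ell_k(\hat z_i)$. The supergradient inequality gives
\[
\ell_k(\hat z_i+v)\le \ell_k(\hat z_i)+\langle g_i^{(k)},v\rangle\le \ell_k(\hat z_i)+\|g_i^{(k)}\|_*\|v\|,
\]
where the last step is the dual-norm inequality. Bounding $\|g_i^{(k)}\|_*\le G_i$ and $\|v\|\le u^{1/p}$ yields $\ell_k(\hat z_i+v)-\ell_k(\hat z_i)\le G_i u^{1/p}$. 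Maximizing over feasible $v$ gives the second case.

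One point needs checking: $G_i$ is defined as a maximum over supergradients. The superdifferential of a finite concave function at an interior point is nonempty, convex and compact (Rockafellar, Theorem~23.4), so this maximum is attained and finite. Since $\ell_k$ is finite on all of $\bR^m$, every point, including each $\hat z_i$, is interior to its domain.

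\emph{Growth bound ($p=1$).} Here we use the sublinear growth condition of Assumption~\ref{asp::regularity}\ref{asp::regular::c}, applied at the fixed decision $x$, which gives $\ell_k\le\ell\le g[1+\|z-\hat z_0\|^r]$. By the triangle inequality, $\|\hat z_i+v-\hat z_0\|\le\|\hat z_i-\hat z_0\|+\|v\|$. Because $r\in[0,1)$, the map $t\mapsto t^r$ is subadditive on $\bR_+$, so
\[
\|\hat z_i+v-\hat z_0\|^r\le\|\hat z_i-\hat z_0\|^r+\|v\|^r\le\|\hat z_i-\hat z_0\|^r+u^r.
\]
Subtracting $\ell_k(\hat z_i)$ then gives
\[
\ell_k(\hat z_i+v)-\ell_k(\hat z_i)\le g+g\|\hat z_i-\hat z_0\|^r-\ell_k(\hat z_i)+g u^r\le C_i+g u^r,
\]
and we again take the maximum over $v$.

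The only delicate step is the subadditivity of $t\mapsto t^r$. It follows from Lemma~\ref{lem::subadditivity} applied to the concave function $h(t)=t^r$ with $h(0)=0$: setting $a=x-y$ and $b=y$ gives $(a+b)^r\le a^r+b^r$. The case $r=0$ is trivial under the convention $0^0=1$. The attainment of $G_i$ discussed above is the other point to verify; the rest is routine.
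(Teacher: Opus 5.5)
Your proposal is correct and follows the paper's proof essentially step for step. For the case $p\ge 1$ you use the supergradient and dual-norm bound together with $\|v\|\le u^{1/p}$. For $p=1$ you use the sublinear growth condition together with subadditivity of $t\mapsto t^r$. Your additional checks, namely finiteness of $G_i$ via compactness of the superdifferential and deriving subadditivity from Lemma~\ref{lem::subadditivity}, are details the paper takes for granted.
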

\begin{proof}
We analyze the cases $p=1$ and $p \ge 1$ separately. Although the $p \ge 1$ regime inherently includes $p=1$, we isolate the latter to provide a more refined analysis. When $p=1$, Assumption~\ref{asp::regularity}\ref{asp::regular::c} indicates that there exists a constant $g>0$, a reference point $\hat{z}_0$ and an exponent $r \in (0,1)$ such that
\[
\ell_k(\hat{z}_i + v) \leq g\left( 1 + \norm{\hat{z}_i + v - \hat{z}_0}^r \right),
\]
for any $\hat{z}_i+v\in \cZ$.
Because $(x+y)^r \leq x^r + y^r$ for $r \in (0,1)$ and any $x, y \in \RR_+$, we have
\[
\ell_k(\hat{z}_i + v) \leq g\left( 1 + \norm{v}^r + \norm{\hat{z}_i - \hat{z}_0}^r \right),
\]
which implies
\[
\ell_k(\hat{z}_i + v) - \ell_k(\hat{z}_i) \leq \left(g + g\norm{\hat{z}_i - \hat{z}_0}^r - \ell_k(\hat{z}_i)\right) + g \norm{v}^r.
\]
Let $C_i$ be a constant as defined in the lemma statement. For any $v \in \RR^m$ such that $\hat{z}_i + v\in \cZ$ and $c(\hat{z}_i + v, \hat{z}_i) \leq u$, by Assumption~\ref{asp::regularity}\ref{asp::regular::c}, we have $\norm{v} \leq c(\hat{z}_i + v, \hat{z}_i) \leq u$. Taking maximization over $v$ on both sides yields:
\[
S_k(u; \hat{z}_i) - S_k(0; \hat{z}_i) \leq C_i + g u^{r}.
\]
Next, we consider $p \geq 1$. Due to the concavity of $\ell_k(\cdot)$, for any perturbation $v$ such that $\hat{z}_i + v\in \cZ$ and $c(\hat{z}_i + v, \hat{z}_i) \leq u$, we have
$$
\ell_k(\hat{z}_i+v) - \ell_k(\hat{z}_i) \leq \langle g_i^{(k)}, v \rangle \leq G_i \|v\|,
$$
where $g_i^{(k)} \in \partial \ell_k(\hat z_i)$.
From Assumption~\ref{asp::regularity}\ref{asp::regular::c}, $\|v\| \leq c(\hat{z}_i+v, \hat{z}_i)^{1/p} \leq u^{1/p}$. Therefore, taking maximization over $v$ on both sides yields:
$$
S_k(u; \hat{z}_i) - S_k(0; \hat{z}_i) \leq G_i u^{1/p},
$$
completing the proof.
\end{proof}

Building upon Lemma~\ref{lem::continuity_subroutine}, we are now equipped to establish the H\"{o}lder continuity of $\Psi_i^{(k_1,k_2)}(\alpha, \beta)$ with respect to the budget allocation $\beta$ for a fixed weight $\alpha$. This result is formally stated in the following lemma.

\begin{lemma}
\label{lem::continuity_beta}
Suppose Assumptions~\ref{asp::regularity} and~\ref{asp::inner_oracle} hold. Fix any $i\in [N]$, $1\leq k_1 < k_2 \leq K$, $b \in \RR_+$, and a weight $\alpha \in \RR_{+}^2$ such that $\alpha_1 + \alpha_2 = 1$. For any two budget allocations $\beta, \beta'\in \RR^2_{+}$ such that $\beta_1+\beta_2 = \beta_1'+\beta_2' = b$, we have:
$$
\left| \Psi_i^{(k_1,k_2)}(\alpha, \beta) - \Psi_i^{(k_1,k_2)}(\alpha, \beta') \right| \leq 2 G_i \left| \beta_1 - \beta_1' \right|^{1/p},
$$
where the constants $G_i$ is specified in Lemma~\ref{lem::continuity_subroutine}.
\end{lemma}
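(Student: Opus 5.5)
Since $\beta_2 = b-\beta_1$ and $\beta_2' = b-\beta_1'$, we have $|\beta_2-\beta_2'| = |\beta_1-\beta_1'| =: \Delta$. The plan is to use the perspective decomposition~\eqref{eq::perspective_decomposition} and bound each of the two summands separately.

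First, suppose $\alpha_1,\alpha_2>0$. Then
\begin{align*}
\left|\Psi_i^{(k_1,k_2)}(\alpha,\beta)-\Psi_i^{(k_1,k_2)}(\alpha,\beta')\right|
\leq \sum_{j=1}^2 \alpha_j \left| S_{k_j}\!\left(\tfrac{\beta_j}{\alpha_j};\hat z_i\right) - S_{k_j}\!\left(\tfrac{\beta_j'}{\alpha_j};\hat z_i\right)\right|.
\end{align*}
Fix $j$ and assume without loss of generality that $\beta_j \geq \beta_j'$. Set $h(u) := S_{k_j}(u;\hat z_i)$, which is concave and nondecreasing on $\RR_+$, as shown above. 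Lemma~\ref{lem::subadditivity}, applied with $x=\beta_j/\alpha_j$ and $y=\beta_j'/\alpha_j$, gives
\begin{align*}
0 \leq h(x)-h(y) \leq h\!\left(\tfrac{\Delta}{\alpha_j}\right)-h(0).
\end{align*}
The $p\geq 1$ branch of Lemma~\ref{lem::continuity_subroutine} then bounds the right-hand side by $G_i(\Delta/\alpha_j)^{1/p}$.

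Multiplying by $\alpha_j$ yields the term $G_i\,\alpha_j^{1-1/p}\Delta^{1/p}$. Since $\alpha_j\in(0,1]$ and $1-1/p\geq 0$, we have $\alpha_j^{1-1/p}\leq 1$. Summing over $j\in[2]$ gives the bound $2G_i\Delta^{1/p}$, as claimed.

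The main subtlety is the boundary case in which some $\alpha_j=0$, since the lemma only assumes $\alpha\in\RR_+^2$. Here I would interpret the summand as the perspective $0\cdot S_{k_j}(\beta_j/0)$, equivalently as the constraint $0\cdot c\leq\beta_j$ in~\eqref{eq::Psi}. That constraint is vacuous, so the summand contributes $0\cdot\ell_{k_j}(\cdot)=0$ regardless of $\beta_j$. This is consistent with the limit $\alpha_j^{1-1/p}\Delta^{1/p}\to 0$ for $p>1$. For $p=1$ it also contributes zero, because the term is independent of $\beta_j$.

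In either case the remaining component is handled exactly as before, so the inequality holds on all of $\RR_+^2$.
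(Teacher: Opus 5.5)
Your proof is correct and follows essentially the same route as the paper. Both use the perspective decomposition~\eqref{eq::perspective_decomposition}, then Lemma~\ref{lem::subadditivity} with the $p\geq 1$ branch of Lemma~\ref{lem::continuity_subroutine} to get $\alpha_j\bigl|S_{k_j}(\beta_j/\alpha_j;\hat z_i)-S_{k_j}(\beta_j'/\alpha_j;\hat z_i)\bigr|\leq G_i\,\alpha_j^{1-1/p}|\beta_j-\beta_j'|^{1/p}$, and finish with $\alpha_j^{1-1/p}\leq 1$. Your treatment of the boundary case $\alpha_j=0$ directly from the definition~\eqref{eq::Psi} is a small improvement: the paper states the decomposition only for $\alpha\in\RR_{++}^2$ and leaves that case implicit.
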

\begin{proof}
For any fixed $k \in [K]$, the function $S_k(u; \hat{z}_i)$ is concave for $u \geq 0$. Let $u_1, u_2 \geq 0$ be two arbitrary radii. Assuming without loss of generality that $u_2 \geq u_1$, we obtain:
$$
S_k(u_2; \hat{z}_i) - S_k(u_1; \hat{z}_i) \leq S_k(u_2 - u_1; \hat{z}_i) - S_k(0; \hat{z}_i)\leq G_i |u_2 - u_1|^{1/p},
$$
where the first and second inequalities follow from Lemma~\ref{lem::subadditivity} and Lemma~\ref{lem::continuity_subroutine}, respectively.
Due to the symmetric nature of the above inequality, it holds for any $u_1, u_2 \geq 0$ that $|S_k(u_2; \hat{z}_i) - S_k(u_1; \hat{z}_i)| \leq G_i |u_2 - u_1|^{1/p}$. This implies:
$$
\begin{aligned}
&\left| \Psi_i^{(k_1,k_2)}(\alpha, \beta) - \Psi_i^{(k_1,k_2)}(\alpha, \beta') \right|\\ 
&\leq \alpha_1 \left| S_{k_1}\left(\frac{\beta_1}{\alpha_1}; \hat{z}_i\right) - S_{k_1}\left(\frac{\beta'_1}{\alpha_1}; \hat{z}_i\right) \right|+\alpha_2 \left| S_{k_2}\left(\frac{\beta_2}{\alpha_2}; \hat{z}_i\right) - S_{k_2}\left(\frac{\beta'_2}{\alpha_2}; \hat{z}_i\right) \right| \\
&\leq \alpha_1 G_i \left| \frac{\beta_1}{\alpha_1} - \frac{\beta'_1}{\alpha_1} \right|^{1/p}+\alpha_2 G_i \left| \frac{\beta_2}{\alpha_2} - \frac{\beta'_2}{\alpha_2} \right|^{1/p} \\
&= G_i \left(\alpha_1^{1 - 1/p} \left| \beta_1 - \beta'_1 \right|^{1/p}+\alpha_2^{1 - 1/p} \left| \beta_2 - \beta'_2 \right|^{1/p}\right)\\
&\leq 2 G_i \left| \beta_1 - \beta_1' \right|^{1/p},
\end{aligned}
$$
thereby completing the proof.
\end{proof}

Similarly, we establish the H\"{o}lder continuity of the partial maximum $\Phi_i^{(k_1,k_2)}(\alpha) := \max_{\beta} \Psi_i^{(k_1,k_2)}(\alpha, \beta)$ with respect to $\alpha$ in the following lemma.

\begin{lemma}
\label{lem::continuity_alpha}
Suppose Assumptions~\ref{asp::regularity} and~\ref{asp::inner_oracle} hold. Fix any $i\in [N]$, $1\leq k_1 < k_2 \leq K$, and $b \in \RR_+$. For any two weight allocations $\alpha, \alpha' \in \RR_{++}$ such that $\alpha_1+\alpha_2 = \alpha_1'+\alpha_2' = 1$, we have:
$$
\left| \Phi_i^{(k_1,k_2)}(\alpha) - \Phi_i^{(k_1,k_2)}(\alpha') \right| \leq 
\left\{
\begin{aligned}
& (R_i + 2 C_i) |\alpha_1 - \alpha_1'| + 2 g b^r |\alpha_1 - \alpha'_1|^{1 - r} && p = 1\\
& R_i |\alpha_1 - \alpha'_1| + 2G_i b^{1/p}|\alpha_1 - \alpha'_1|^{1 - 1/p} &&  p> 1
\end{aligned}
\right. ,
$$
where $R_{i} := \max_{1\leq k_1 < k_2 \leq K} \left| \ell_{k_1}(\hat{z}_i) - \ell_{k_2}(\hat{z}_i) \right|$ is a constant; the constants $C_i$, $G_i$, $g>0$, and $r \in (0,1)$ are specified in Lemma~\ref{lem::continuity_subroutine}.
\end{lemma}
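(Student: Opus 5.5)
The plan is to compare $\Phi_i^{(k_1,k_2)}$ at $\alpha$ and $\alpha'$ through a transfer argument. Take a budget split that is optimal at one weight vector, modify it into a feasible split for the other weight vector, and bound the resulting change in $\Psi_i^{(k_1,k_2)}$ using the perspective decomposition~\eqref{eq::perspective_decomposition}. By symmetry it suffices to show $\Phi_i^{(k_1,k_2)}(\alpha) - \Phi_i^{(k_1,k_2)}(\alpha') \leq \text{RHS}$. Assume without loss of generality that $\alpha_1 \geq \alpha_1'$, so $\alpha_2 \leq \alpha_2'$, and write $\Delta := \alpha_1-\alpha_1' = \alpha_2'-\alpha_2$. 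The case $\alpha_1 < \alpha_1'$ is identical with the roles of the two components swapped. Let $\beta^\star$ attain $\Phi_i^{(k_1,k_2)}(\alpha)$. The maximum exists because $\beta$ ranges over a compact segment and $\Psi$ is continuous in $\beta$ by Lemma~\ref{lem::continuity_beta}.

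The candidate for $\alpha'$ is a split $\beta'$ chosen so that the effective radii are preserved as far as possible. For the component whose mass shrinks ($k_1$), keep the same radius: set $\beta_1' := \alpha_1'\beta_1^\star/\alpha_1 \leq \beta_1^\star$. Then $\beta_2' := b-\beta_1' \geq \beta_2^\star$. For $k_1$, the difference is $(\alpha_1-\alpha_1')S_{k_1}(\beta_1^\star/\alpha_1;\hat z_i) = \Delta\, S_{k_1}(u_1)$ with $u_1 := \beta_1^\star/\alpha_1$. For $k_2$ we need a lower bound on $\alpha_2' S_{k_2}(\beta_2'/\alpha_2')$. Since $\alpha_2' S(\beta_2'/\alpha_2')$ is a perspective and hence jointly concave and nondecreasing in budget, and since $\beta_2' \geq \beta_2^\star$, we have $\alpha_2' S_{k_2}(\beta_2'/\alpha_2') \geq \alpha_2' S_{k_2}(\beta_2^\star/\alpha_2')$. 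Concavity of $S_{k_2}$ then gives $\alpha_2' S_{k_2}(\beta_2^\star/\alpha_2') \geq \alpha_2 S_{k_2}(\beta_2^\star/\alpha_2) + \Delta S_{k_2}(0)$. This follows by writing $\beta_2^\star/\alpha_2' = \tfrac{\alpha_2}{\alpha_2'}\cdot\tfrac{\beta_2^\star}{\alpha_2} + \tfrac{\Delta}{\alpha_2'}\cdot 0$ and applying Jensen.

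Combining the two components,
\[
\Phi_i^{(k_1,k_2)}(\alpha)-\Phi_i^{(k_1,k_2)}(\alpha') \leq \Delta\bigl(S_{k_1}(u_1;\hat z_i) - S_{k_2}(0;\hat z_i)\bigr)
= \Delta\bigl(\ell_{k_1}(\hat z_i)-\ell_{k_2}(\hat z_i)\bigr) + \Delta\bigl(S_{k_1}(u_1)-S_{k_1}(0)\bigr).
\]
The first term is at most $R_i\Delta$. The second term is handled by Lemma~\ref{lem::continuity_subroutine}, which is the crux of the argument. For $p>1$ it gives $\Delta G_i u_1^{1/p}$. The radius $u_1 = \beta_1^\star/\alpha_1$ may be large when $\alpha_1$ is small, so a plain bound $u_1\leq b/\alpha_1$ gives $\Delta G_i b^{1/p}\alpha_1^{-1/p}$, and we must convert $\Delta\alpha_1^{-1/p}$ into $\Delta^{1-1/p}$. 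This requires $\alpha_1 \geq \Delta$, which holds because $\alpha_1 \geq \alpha_1 - \alpha_1' = \Delta$ when $\alpha_1'>0$. It follows that $\Delta\alpha_1^{-1/p} \leq \Delta^{1-1/p}$, which yields $G_i b^{1/p}\Delta^{1-1/p}$. The stated factor $2$ leaves slack, for instance for the symmetric case or for using $\beta_1^\star\leq b$ loosely. For $p=1$ the same step gives $\Delta(C_i + g u_1^r) \leq C_i\Delta + g b^r \Delta^{1-r}$, again using $\alpha_1\geq\Delta$, which fits within $(R_i+2C_i)\Delta + 2gb^r\Delta^{1-r}$.

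The main obstacle is the one just handled: the transferred component must be chosen so that the radius blow-up $\beta/\alpha$ occurs only on the component whose mass decreases, where the prefactor $\Delta$ absorbs it via $\alpha_1\geq\Delta$. Choosing the split the other way would produce radius $\beta/\alpha$ on a growing-mass component without this compensation. The concavity/Jensen step for $k_2$ also needs care when $b=0$ or at boundary splits. These cases are covered because $S_k(0)=\ell_k(\hat z_i)$ and the perspective convention of the paper extends $t f(z/t)$ to $t=0$.
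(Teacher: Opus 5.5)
Your proof is correct, but it follows a different route from the paper's.

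\textbf{The paper's route.} It never changes the budget split. It uses the envelope bound $|\Phi_i^{(k_1,k_2)}(\alpha)-\Phi_i^{(k_1,k_2)}(\alpha')|\le\max_{\beta\in\{\beta^\star(\alpha),\beta^\star(\alpha')\}}|\Psi_i^{(k_1,k_2)}(\alpha,\beta)-\Psi_i^{(k_1,k_2)}(\alpha',\beta)|$. It then writes $\Psi_i^{(k_1,k_2)}(\alpha,\beta)=\sum_j\alpha_j\ell_{k_j}(\hat z_i)+\sum_j\alpha_jH_j(\beta_j/\alpha_j)$ with $H_j:=S_{k_j}(\cdot\,;\hat z_i)-S_{k_j}(0;\hat z_i)$. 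Applying Lemma~\ref{lem::subadditivity} to each map $\alpha_j\mapsto\alpha_jH_j(\beta_j/\alpha_j)$ produces a term $\Delta H_j(\beta_j/\Delta)$ for \emph{both} components. Lemma~\ref{lem::continuity_subroutine} together with $\beta_j\le b$ then bounds these terms.

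\textbf{Your route.} You rescale the split so that the shrinking component keeps its radius and the growing one absorbs the freed budget. Monotonicity and Jensen against $S_{k_2}(0;\hat z_i)$ show that the growing component gains at least $\Delta\,\ell_{k_2}(\hat z_i)$. So a single $H$-term $\Delta H_1(\beta_1^\star/\alpha_1)\le\Delta H_1(\beta_1^\star/\Delta)$ remains, with $\alpha_1\ge\Delta$ doing the job of the paper's division by $\Delta$.

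\textbf{What each buys.} Your version yields the sharper bounds $R_i\Delta+G_ib^{1/p}\Delta^{1-1/p}$ for $p>1$ and $(R_i+C_i)\Delta+gb^r\Delta^{1-r}$ for $p=1$. It also never evaluates a perspective at zero mass, so your closing concern about the $t=0$ convention is moot, since $\alpha,\alpha'\in\RR_{++}^2$. The paper's argument is more mechanical: it is symmetric in the two components and in both directions of the absolute value, with no case split on which weight shrinks. In exchange, it pays the perspective perturbation on both components, which is where its factors $2$ and $2C_i$ come from. It also relies on $h_j(0)=0$, which holds because the growth bounds of Lemma~\ref{lem::continuity_subroutine} force $\alpha_jH_j(\beta_j/\alpha_j)\to0$ as $\alpha_j\to0$.
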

\begin{proof}
Let $\beta^\star(\alpha) = \argmax_\beta \Psi_i^{(k_1,k_2)}(\alpha, \beta)$ and $\beta^\star(\alpha') = \argmax_\beta \Psi_i^{(k_1,k_2)}(\alpha', \beta)$ be the optimal budgets for fixed $\alpha$ and $\alpha'$, respectively. By definition, $\Phi_i^{(k_1,k_2)}(\alpha) = \Psi_i^{(k_1,k_2)}(\alpha, \beta^\star(\alpha))$ and $\Phi_i^{(k_1,k_2)}(\alpha') \geq \Psi_i^{(k_1,k_2)}(\alpha', \beta^\star(\alpha))$, which implies $\Phi_i^{(k_1,k_2)}(\alpha)-\Phi_i^{(k_1,k_2)}(\alpha')\leq \Psi_i^{(k_1,k_2)}(\alpha, \beta^\star(\alpha))-\Psi_i^{(k_1,k_2)}(\alpha', \beta^\star(\alpha))$. Similarly, one can obtain $\Phi_i^{(k_1,k_2)}(\alpha')-\Phi_i^{(k_1,k_2)}(\alpha)\leq \Psi_i^{(k_1,k_2)}(\alpha', \beta^\star(\alpha'))-\Psi_i^{(k_1,k_2)}(\alpha, \beta^\star(\alpha'))$. Combining these inequalities yields:
\begin{align}\label{eq::Phi-equiv}
    \left| \Phi_i^{(k_1,k_2)}(\alpha) - \Phi_i^{(k_1,k_2)}(\alpha') \right| \leq \max_{\beta \in \{\beta^\star(\alpha), \beta^\star(\alpha')\}} \left| \Psi_i^{(k_1,k_2)}(\alpha, \beta) - \Psi_i^{(k_1,k_2)}(\alpha', \beta) \right|.
\end{align}
We now bound the right-hand side for any fixed feasible budget $\beta$. For convenience, define $H_j(u) := S_{k_j}(u; \hat{z}_i) - S_{k_j}(0; \hat{z}_i)$. Because $S_{k_j}(\cdot\,; \hat{z}_i)$ is concave, $H_j(\cdot)$ is concave, nondecreasing and satisfies $H_j(0) = 0$. Noting that $S_{k_j}(0; \hat{z}_i) = \ell_{k_j}(\hat{z}_i)$, we rewrite \eqref{eq::perspective_decomposition} using $H_j$:
\begin{align}\label{eq::Psi-equiv}
    \Psi_i^{(k_1,k_2)}(\alpha, \beta) = \sum_{j=1}^2 \alpha_j \ell_{k_j}(\hat{z}_i) + \sum_{j=1}^2 \underbrace{\alpha_j H_j\left(\frac{\beta_j}{\alpha_j}\right)}_{:= h_j(\alpha_j)}.
\end{align}
Let $\Delta := |\alpha_1 - \alpha'_1| = |\alpha_2 - \alpha'_2|$ and fix $j\in [2]$. As the perspective function of a concave function, $h_j(\alpha_j)$ is concave and nonnegative for $\alpha_j \geq 0$, with $h_j(0) = 0$. Therefore, by Lemma~\ref{lem::subadditivity}, for any $\alpha_j, \alpha_j' \geq 0$:
$$
\left| h_j(\alpha_j) - h_j(\alpha'_j) \right| \leq \left| h_j(|\alpha_j - \alpha'_j|) - h_j(0) \right| = h_j(\Delta)=\Delta H_j\left(\frac{\beta_j}{\Delta}\right).
$$
We now divide the analysis based on the value of $p$ to bound $H_j(\beta_j / \Delta)$, utilizing the upper bounds established in Lemma~\ref{lem::continuity_subroutine}.

$\diamond$ When $p > 1$, from Lemma~\ref{lem::continuity_subroutine}, we have $H_j(u) \leq G_i u^{1/p}$, which leads to
$$
\left| h_j(\alpha_j) - h_j(\alpha'_j) \right|\leq h_j(\Delta)=\Delta H_j\left(\frac{\beta_j}{\Delta}\right) \leq \Delta \left[ G_i \left( \frac{\beta_j}{\Delta} \right)^{1/p} \right] \leq G_i b^{1/p} \Delta^{1 - 1/p}.
$$
Combining this bound with \eqref{eq::Psi-equiv} yields
$$
\begin{aligned}
\left| \Psi_i^{(k_1,k_2)}(\alpha, \beta) - \Psi_i^{(k_1,k_2)}(\alpha', \beta) \right| 
&\leq \left| \ell_{k_1}(\hat{z}_i) - \ell_{k_2}(\hat{z}_i) \right| \Delta + \sum_{j=1}^2 h_j(\Delta) 
\leq R_i \Delta + 2 G_i b^{1/p} \Delta^{1 - 1/p}.
\end{aligned}
$$
where $R_i$ is a constant defined in the lemma statement.

$\diamond$ When $p = 1$, from Lemma~\ref{lem::continuity_subroutine}, we have $H_j(u) \leq C_i + g u^r$, which leads to
$$
\left| h_j(\alpha_j) - h_j(\alpha'_j) \right|\leq h_j(\Delta)=\Delta H_j\left(\frac{\beta_j}{\Delta}\right) \leq \Delta \left[ C_i + g \left( \frac{\beta_j}{\Delta} \right)^{r} \right] \leq C_i \Delta + g b^r \Delta^{1 - r}.
$$
Combining this bound with \eqref{eq::Psi-equiv} yields
$$
\begin{aligned}
\left| \Psi_i^{(k_1,k_2)}(\alpha, \beta) - \Psi_i^{(k_1,k_2)}(\alpha', \beta) \right| 
&\leq \left| \ell_{k_1}(\hat{z}_i) - \ell_{k_2}(\hat{z}_i) \right| \Delta + \sum_{j=1}^2 \left( C_i \Delta + g b^r \Delta^{1 - r} \right) \\
&\leq  (R_i + 2 C_i)\Delta + 2 g b^r \Delta^{1 - r}.
\end{aligned}
$$
Since both cases hold for any feasible \(\beta\), substituting them into~\eqref{eq::Phi-equiv} completes the proof.   
\end{proof}

We are now ready to prove Lemma~\ref{lem::eval_approx}.

\begin{proof}[Proof of Lemma~\ref{lem::eval_approx}] 
For convenience, for fixed $i \in [N]$, $1 \le k_1 < k_2 \le K$, $b \ge 0$, we abbreviate $\Psi_i^{(k_1,k_2)}(\alpha,\beta)$ by $\Psi(\alpha,\beta)$ and abbreviate $\Phi_i^{(k_1,k_2)}(\alpha)$ by $\Phi(\alpha)$ throughout this proof. 
We also define
\begin{align}
\label{eq:constants}
    C := \max_{i \in [N]} ~ C_i, 
    \qquad
    G := \max_{i \in [N]} ~ G_i, 
    \qquad
    R := \max_{i \in [N]} ~ R_i,
\end{align}
where the local constants $C_i$, $G_i$ and $R_i$ are defined in Lemmas~\ref{lem::continuity_subroutine} and \ref{lem::continuity_alpha}.
The nested concavity of $\Psi(\alpha,\beta)$ and $\Phi(\alpha)$ established in Lemma~\ref{lem::dgss} guarantees that the objective is unimodal along any line segment, which is the key property underlying the nested golden-section search algorithm (Algorithms~\ref{alg::evaluate_pairwise} and~\ref{alg::solve_inner}).

Let $(\alpha^\star,\beta^\star)$ be an optimal solution to \eqref{eq::U_i^k} with objective value $V_i^{(k_1, k_2)}(b)$. Under Assumption~\ref{asp::inner_oracle}, for any fixed $\alpha_j \in \RR_{++}$ and $\beta_j \in \RR_{+}$, we set the radius to $u_j = \beta_j/\alpha_j$, for $j\in [2]$. By querying the oracle with accuracy $\epsilon > 0$, we obtain a perturbation $\hat{v}_j \in \RR^m$ in time $\mathsf{Cost}_{k_j, \epsilon}$ such that $\hat{z}_j + v_j \in \cZ$, $c(\hat{z}_i + \hat{v}_j, \hat{z}_i) \le u_j$ and
$$
\ell_{k_j}(\hat{z}_i + \hat{v}_j) \ge S_{k_j}(u_j; \hat{z}_i) - \epsilon, \quad j \in [2].
$$
We define our approximate evaluation of $\Psi(\alpha,\beta)$ as:
$$
\hat{\Psi}(\alpha, \beta) := \alpha_1 \ell_{k_1}(\hat{z}_i + \hat{v}_1) + \alpha_2 \ell_{k_2}(\hat{z}_i + \hat{v}_2).
$$
Since its exact value is $\Psi(\alpha, \beta) = \alpha_1 S_{k_1}(u_1; \hat{z}_i) + \alpha_2 S_{k_2}(u_2; \hat{z}_i)$, the approximation error is bounded by
$$
\Psi(\alpha, \beta) - \hat{\Psi}(\alpha, \beta) \le \alpha_1 \epsilon + \alpha_2 \epsilon = \epsilon.
$$
Thus, we can compute $\Psi(\alpha, \beta)$ to tolerance $\epsilon$ in time $\mathsf{Cost}_{k_1,\epsilon} + \mathsf{Cost}_{k_2,\epsilon}$.

Fix $\alpha$ in the standard simplex. The inner golden-section search in Algorithm~\ref{alg::solve_inner} narrows the search interval for $\beta$ by comparing evaluations $\hat{\Psi}(\alpha,\beta)$ and $\hat{\Psi}(\alpha,\beta')$. We claim that the algorithm discards the correct subinterval whenever the true difference in function values exceeds $2 \epsilon$. Assume without loss of generality that $\Psi(\alpha,\beta) > \Psi(\alpha,\beta') + 2\epsilon$. Then we have
$$
\hat{\Psi}(\alpha,\beta) \ge \Psi(\alpha,\beta) - \epsilon > \Psi(\alpha,\beta') + \epsilon \ge \hat{\Psi}(\alpha,\beta'),
$$
ensuring the algorithm correctly maintains the optimal budget $\beta^\star(\alpha)$ within the interval $[L_{\beta}, U_{\beta}]$. Conversely, the algorithm may only discard a subinterval containing the true maximizer $\beta^\star(\alpha)$ if the function values are indistinguishable within the $2\epsilon$ tolerance. As a result, the suboptimality introduced by potentially incorrect discards is bounded by $2\epsilon$. The search terminates when the interval length reduces to $\eta$ in Algorithm~\ref{alg::solve_inner}. By Lemma~\ref{lem::continuity_beta}, $\Psi(\alpha, \cdot)$ is $(1/p)$-H\"{o}lder continuous, meaning this final interval length introduces a continuous resolution error of at most $2 G_i \eta^{1/p}$. Thus, the overall gap to the true inner maximum is bounded by:
$$
\Psi(\alpha,\beta^\star(\alpha)) - \Psi(\alpha,\hat{\beta}(\alpha)) \le \max\left\{2\epsilon, 2 G_i \eta^{1/p}\right\}\leq 2\epsilon+ 2 G \eta^{1/p}.
$$ 
Algorithm~\ref{alg::solve_inner} achieves this precision in time:
$$
O\left( \left( \mathsf{Cost}_{k_1,\epsilon} + \mathsf{Cost}_{k_2,\epsilon} \right) 
\log\!\left(\frac{b}{\eta}\right) \right).
$$
The outer golden-section search in Algorithm \ref{alg::evaluate_pairwise} operates on the partial maximum function $\Phi(\alpha) = \Psi(\alpha, \beta^\star(\alpha))$. For any queried $\alpha$, the algorithm uses the inner  search to compute the approximate maximum $\hat{\Phi}(\alpha) := \hat{\Psi}(\alpha, \hat{\beta}(\alpha))$. The total error of this evaluation incorporates both the inner optimization error and the oracle evaluation error:
$$
|\Phi(\alpha) - \hat{\Phi}(\alpha)| \le |\Psi(\alpha, \beta^\star(\alpha)) - \Psi(\alpha, \hat{\beta}(\alpha))| + |\Psi(\alpha, \hat{\beta}(\alpha)) - \hat{\Psi}(\alpha, \hat{\beta}(\alpha))| \le  3\epsilon + 2 G \eta^{1/p}.
$$
Similar to the inner search, Algorithm \ref{alg::evaluate_pairwise} correctly discards subintervals whenever the true difference in function values between two weights $\alpha, \alpha'$ strictly exceeds twice the evaluation error $6 \epsilon + 4 G \eta^{1/p}$. Conversely, the algorithm may mistakenly discard a subinterval containing the true optimal weight $\alpha^\star$ only if the function values are indistinguishable within this tolerance. Consequently, the suboptimality introduced by potentially incorrect discards is bounded by $6 \epsilon + 4 G \eta^{1/p}$.

The search safely terminates when the interval length reduces to $\eta$ in Algorithm~\ref{alg::evaluate_pairwise}. This final interval length introduces an additional continuous resolution error, which can be controlled by the H\"{o}lder continuity of $\Phi(\alpha)$ established in Lemma~\ref{lem::continuity_alpha}. Therefore, the total suboptimality gap must account for both the accumulated error from the discard threshold and this final resolution error. Adding these components together and using the bounds $G_i \leq G$, $R_i \leq R$, and $C_i \leq C$, the gap is bounded~by:
$$
\Phi(\alpha^\star) - \Phi(\hat{\alpha}) \le \left( 6 \epsilon + 4 G \eta^{1/p} \right) + 
\begin{cases} 
R \eta + 2 G b^{1/p} \eta^{1 - 1/p} & \text{if } p > 1, \\ 
(R + 2C) \eta + 2 g b^r \eta^{1 - r} & \text{if } p = 1. 
\end{cases}
$$
The outer search requires $O(\log(1/\eta))$ iterations to reach the final interval $\eta$. Combining the nested loops, the total runtime of the algorithm is:
$$
O \left( \left( \mathsf{Cost}_{k_1,\epsilon} + \mathsf{Cost}_{k_2,\epsilon} \right) 
\log\!\left(\frac{b}{\eta}\right) \log\!\left(\frac{1}{\eta}\right) \right).
$$
The final evaluation returned by the algorithm is $\hat{\Phi}(\hat{\alpha}) = \hat{\Psi}(\hat{\alpha}, \hat{\beta}(\hat{\alpha}))$. Its deviation from the true global optimum $V^{(k_1,k_2)}_i(b) = \Phi(\alpha^\star)$ satisfies:
\begin{align*}
|\hat{\Phi}(\hat{\alpha}) - \Phi(\alpha^\star)|
&\le |\hat{\Phi}(\hat{\alpha}) - \Phi(\hat{\alpha})| + |\Phi(\hat{\alpha}) - \Phi(\alpha^\star)| \\
&\le \left( 9 \epsilon + 6 G \eta^{1/p} \right) + 
\begin{cases} 
R \eta + 2 G b^{1/p} \eta^{1 - 1/p} & \text{if } p > 1, \\ 
(R + 2 C) \eta + 2 g b^r \eta^{1 - r} & \text{if } p = 1. 
\end{cases}
\end{align*}
By defining the required target tolerance appropriately, the nested golden-section search correctly and efficiently computes the approximate evaluation. This completes the proof.
\end{proof}

\begin{proof}[Proof of Lemma~\ref{lem::evaluate_vi}] 
The proof readily follows from \eqref{eq::V_i} and the result of Lemma~\ref{lem::eval_approx}.
\end{proof}


\subsection{Proof of Lemma~\ref{lem::Bi_eval}}
\label{proof::lem::Bi_eval}

Before presenting the proof of Lemma~\ref{lem::Bi_eval}, we first need three intermediate results.

\begin{lemma}
\label{lem::concave_V}
Under Assumptions~\ref{asp::regularity} and~\ref{asp::inner_oracle}, the function $V_i(b)$ is concave on $b \in \RR_+$ for each $i \in [N]$.
\end{lemma}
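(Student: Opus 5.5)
The plan is to show that $V_i(\cdot)$, as defined in \eqref{eq::full_V_i}, is the value function of a problem that can be lifted to a jointly convex formulation in the budget $b_i$. Concavity of $V_i$ then follows from the standard fact that partial maximization of a jointly concave function over a convex set, parametrized by a right-hand side, yields a concave function \cite[Section 3.2.5]{boyd2004convex}. We cannot use the pairwise representation \eqref{eq::V_i} directly, since a maximum of concave functions need not be concave. Instead we work with the full $K$-component formulation \eqref{eq::full_V_i}, which by the proof of Theorem~\ref{thm::decomposition} equals $V_i(b_i)$.

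\textbf{Step 1: rewrite $V_i$ via $S_k$ and perspectives.} Introduce per-component budgets $\beta_k\ge0$ with $\sum_k\beta_k\le b$ and constraints $\alpha_k c(\hat z_i+v_k,\hat z_i)\le\beta_k$, exactly as in \eqref{eq::U_i^k}. This gives
\begin{align*}
V_i(b)=\max\Big\{\sum_{k=1}^K \alpha_k S_k(\beta_k/\alpha_k;\hat z_i):\ \alpha\in\Delta_K,\ \beta\in\bR_+^K,\ \textstyle\sum_k\beta_k\le b\Big\}.
\end{align*}
Components with $\alpha_k=0$ are handled by the perspective convention of the notation section, i.e.\ through the recession function of $S_k$. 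The equivalence needs two directions. Given feasible $(\alpha,v)$, set $\beta_k=\alpha_k c(\hat z_i+v_k,\hat z_i)$. Conversely, given $(\alpha,\beta)$, choose $v_k$ optimal in \eqref{eq::inner_oracle}; attainment follows from the coercivity in Assumption~\ref{asp::regularity}\ref{asp::regular::c}. If the local budget constraint happens to hold with strict inequality, monotonicity of $S_k$ removes the gap.

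\textbf{Step 2: joint concavity.} As shown just before Lemma~\ref{lem::dgss}, each $S_k(\cdot;\hat z_i)$ is concave on $\bR_+$. Hence its perspective $(\alpha_k,\beta_k)\mapsto\alpha_kS_k(\beta_k/\alpha_k;\hat z_i)$ is jointly concave on $\bR_+^2$. For $\alpha_k=0$ this uses the upper-semicontinuous closure, which equals $\beta_k S_k^\infty(1)$. Under Assumption~\ref{asp::regularity}\ref{asp::regular::c} this limit is finite and in fact $0$ when $p>1$ or when $p=1$ with $r<1$, because Lemma~\ref{lem::continuity_subroutine} gives sublinear growth of $S_k$. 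The objective is therefore jointly concave in $(\alpha,\beta)$, and the feasible set $\{(\alpha,\beta,b):\alpha\in\Delta_K,\beta\ge0,\sum_k\beta_k\le b\}$ is convex.

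\textbf{Step 3: conclude.} For $b,b'\ge0$ and $\theta\in[0,1]$, take maximizers $(\alpha,\beta)$ for $b$ and $(\alpha',\beta')$ for $b'$, or $\varepsilon$-maximizers if attainment is doubtful; finiteness follows from Lemma~\ref{lem::continuity_subroutine}. The convex combination is feasible for $\theta b+(1-\theta)b'$, so joint concavity gives $V_i(\theta b+(1-\theta)b')\ge\theta V_i(b)+(1-\theta)V_i(b')$.

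The main obstacle is the boundary behaviour at $\alpha_k=0$. In the combination of two solutions, a component can have positive mass in one and zero mass in the other, so the perspective must be defined consistently there. I would handle this by using the closure of the perspective and checking, via Lemma~\ref{lem::continuity_subroutine}, that $S_k(u)/u\to0$. The zero-mass term then contributes nothing, which matches the original problem, where a zero-mass atom contributes neither loss nor cost.
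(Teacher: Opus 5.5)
Your proposal is correct and takes essentially the same route as the paper. The paper also works with the full $K$-component problem rather than the pairwise maximum, rewrites it as a sum of perspectives $\alpha_k S_k(\beta_k/\alpha_k;\hat z_i)$ that is jointly concave in $(\alpha,\beta)$ over the convex set given by $\sum_k \alpha_k = 1$ and $\sum_k \beta_k \le b$, and concludes by partial maximization; your treatment of the $\alpha_k=0$ boundary (via the vanishing recession slope of $S_k$) and your $\varepsilon$-maximizer argument only make explicit details that the paper's short proof leaves implicit.
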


\begin{proof}
Recall from Section~\ref{sec::worst_case} that the full local utility $V_i(b)$ can be expressed as the partial maximization of a joint objective over the probability weights $\alpha$ and the spatial perturbations $v$. By applying the same perspective function transformation used in \eqref{eq::perspective_decomposition}, the joint objective can be rewritten as a sum of perspective functions, which is jointly concave in the weights $\alpha$ and the component budgets $\beta$. Because the constraints $\sum_{k=1}^K \alpha_k = 1$ and $\sum_{k=1}^K \beta_k \le b$ define a convex feasible region in $(\alpha, \beta, b)$, $V_i(b)$ is the partial maximization of a jointly concave function over a convex set. The concavity of $V_i(b)$ thus follows directly from~\cite[Section 3.2.5]{boyd2004convex}.
\end{proof}

Our next lemma establishes the H\"{o}lder continuity of $V_i$.

\begin{lemma}
\label{lem::continuity_V}
Suppose Assumptions~\ref{asp::regularity} and~\ref{asp::inner_oracle} hold. Fix $i\in [N]$. For any two budget allocations $b, b' \in \RR_+$, we have:
\begin{align*}
    \left| V_i(b) - V_i(b') \right| \le G_i \left| b - b'\right|^{1/p},
\end{align*}
where $G_i$ is a constant specified in Lemma~\ref{lem::continuity_subroutine}.
\end{lemma}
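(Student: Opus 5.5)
Assume without loss of generality that $b' \le b$. Lemma~\ref{lem::concave_V} makes $V_i$ concave on $\RR_+$, and it is nondecreasing, since enlarging the budget only enlarges the feasible set of~\eqref{eq::full_V_i}. Hence $|V_i(b)-V_i(b')| = V_i(b)-V_i(b')$. Applying the subadditivity inequality of Lemma~\ref{lem::subadditivity} to $h=V_i$ gives
\[
V_i(b)-V_i(b') \le V_i(b-b') - V_i(0).
\]
This reduces the claim to a continuity estimate at the origin: we need $V_i(u)-V_i(0)\le G_i u^{1/p}$ for every $u\ge 0$.

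To bound $V_i(u)-V_i(0)$, first identify $V_i(0)$. With zero budget, every atom with $\alpha_k>0$ must satisfy $c(\hat z_i+v_k,\hat z_i)=0$. Under Assumption~\ref{asp::regularity}\ref{asp::regular::c} this forces $v_k=0$, so $V_i(0)=\max_k \ell_k(\hat z_i)=\ell(\hat z_i)$.

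Now take any feasible $(\alpha_k,v_k)_k$ for budget $u$. By concavity of $\ell_k$, for a supergradient $g_i^{(k)}\in\partial\ell_k(\hat z_i)$ we have
\[
\ell_k(\hat z_i+v_k) \le \ell_k(\hat z_i) + G_i\|v_k\| \le \ell(\hat z_i) + G_i\|v_k\|.
\]
Summing with the weights $\alpha_k$, the objective is at most $\ell(\hat z_i)+G_i\sum_k\alpha_k\|v_k\|$. By Assumption~\ref{asp::regularity}\ref{asp::regular::c}, $\|v_k\|\le c(\hat z_i+v_k,\hat z_i)^{1/p}$. Since $t\mapsto t^{1/p}$ is concave and $\sum_k\alpha_k=1$, Jensen's inequality gives
\[
\sum_k \alpha_k\, c_k^{1/p} \le \Big(\sum_k\alpha_k c_k\Big)^{1/p} \le u^{1/p}.
\]
Taking the supremum over feasible points yields $V_i(u)-V_i(0)\le G_i u^{1/p}$, which completes the argument.

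The main obstacle is the Jensen step that aggregates the per-atom Hölder bounds. A naive per-atom estimate would cost a factor involving $\alpha_k^{1-1/p}$ or a factor of $K$. Putting the averaging inside the concave power avoids any constant loss, which is what gives the sharp constant $G_i$. A secondary subtlety is that $V_i$ is defined through the full $K$-component problem~\eqref{eq::full_V_i}. Working with that form, rather than the pairwise maximum~\eqref{eq::V_i}, lets us use concavity directly from Lemma~\ref{lem::concave_V}.
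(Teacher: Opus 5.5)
Your proof is correct and follows the paper's argument essentially step for step. You reduce to the origin via concavity (Lemma~\ref{lem::concave_V}) and the subadditivity Lemma~\ref{lem::subadditivity}, then bound $V_i(u)-V_i(0)$ using a supergradient of each $\ell_k$ at $\hat z_i$, the inequality $\|v_k\|\le c(\hat z_i+v_k,\hat z_i)^{1/p}$, and Jensen's inequality for $t\mapsto t^{1/p}$. The differences are minor: you take the supremum over feasible points directly rather than using the paper's $\epsilon$-optimal solution followed by $\epsilon\to 0^+$, and you explicitly justify monotonicity of $V_i$ and the value $V_i(0)=\ell(\hat z_i)$, which the paper uses tacitly (monotonicity is genuinely needed to pass from the one-sided subadditivity bound to the absolute value).
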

\begin{proof}
For any budget $\delta > 0$ and arbitrarily small $\epsilon > 0$, there exists a feasible solution $\{\alpha_{k}^{\delta, \epsilon}\}_{k\in [K]}$, $\{v_{k}^{\delta, \epsilon}\}_{k\in [K]}$ to \eqref{eq::full_V_i} for $V_i(\delta)$ such that the budget constraint $\sum_{k=1}^K \alpha_k^{\delta, \epsilon} c(\hat{z}_i + v_k^{\delta, \epsilon}, \hat{z}_i) \leq \delta$ holds, and
$$
0\leq V_i(\delta) - \sum_{k=1}^{K} \alpha_k^{\delta, \epsilon} \ell_k(\hat{z}_i + v_k^{\delta, \epsilon}) < \epsilon.
$$
From the assumption $c(z, \hat{z}_i) \ge \|z - \hat{z}_i\|^p$, we know that $\|v_k\| \le c(\hat{z}_i + v_k, \hat{z}_i)^{1/p}$. Because $p \ge 1$, the function $x \mapsto x^{1/p}$ is concave for $x \ge 0$. We can apply Jensen's inequality to bound the weighted sum of norms:$$\sum_{k=1}^K \alpha_k^{\delta, \epsilon} \|v_k^{\delta, \epsilon}\| \leq \sum_{k=1}^K \alpha_k^{\delta, \epsilon} c(\hat{z}_i + v_k^{\delta, \epsilon}, \hat{z}_i)^{1/p} \leq \left( \sum_{k=1}^K \alpha_k^{\delta, \epsilon} c(\hat{z}_i + v_k^{\delta, \epsilon}, \hat{z}_i) \right)^{1/p} \leq \delta^{1/p}.$$Now, we bound the objective difference from $0$:
$$
\begin{aligned}
V_i(\delta) - V_i(0) &\leq \sum_{k=1}^{K} \alpha_k^{\delta, \epsilon} \ell_k(\hat{z}_i + v_k^{\delta, \epsilon}) + \epsilon  - \max_{k\in [K]}\ell_k(\hat{z}_i) \\
&\leq \sum_{k=1}^{K} \alpha_k^{\delta, \epsilon} \left[ \ell_k(\hat{z}_i + v_k^{\delta, \epsilon}) - \ell_k(\hat{z}_i)\right] + \epsilon \\
&\leq \sum_{k=1}^{K} \alpha_k^{\delta, \epsilon} \norm{g_i^{(k)}}_* \norm{v_k^{\delta, \epsilon}} + \epsilon \\
&\leq G_i \sum_{k=1}^K \alpha_k^{\delta, \epsilon} \norm{v_k^{\delta, \epsilon}} + \epsilon \\
&\leq G_i \delta^{1/p} + \epsilon.
\end{aligned}
$$
Taking the limit as $\epsilon \rightarrow 0^+$, we arrive at $V_i(\delta) - V_i(0) \leq G_i \delta^{1/p}.$
Last, by invoking Lemma~\ref{lem::subadditivity} and Lemma~\ref{lem::concave_V}, for any $b, b' \in \RR_+$, we have
\[
\left| V_i(b) - V_i(b') \right| 
\le \left| V_i(|b - b'|) - V_i(0) \right| 
\le G_i \left| b - b'\right|^{1/p}.
\]
This completes the proof.
\end{proof}

Building on the H\"{o}lder continuity established in Lemma~\ref{lem::continuity_V}, the following lemma demonstrates that the search space for the optimal dual multiplier can be rigorously restricted to a finite interval $[0, U_\lambda]$ for an explicit value of $U_\lambda$.

\begin{lemma}
\label{lem::U_lambda}
Suppose Assumptions~\ref{asp::regularity} and~\ref{asp::inner_oracle} hold. There exists an optimal dual multiplier $\lambda^\star$ satisfying $\lambda^\star\in [0, U_\lambda]$, where 
\[
U_\lambda := \rho^{\frac{1-p}{p}} G.
\]
Here, $G := \max_{i \in [N]} G_i$, and the constants $G_i > 0$ are specified in Lemma~\ref{lem::continuity_subroutine}.
\end{lemma}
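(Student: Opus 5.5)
We aim to show that some optimal multiplier of the budget allocation problem~\eqref{eq::master} satisfies $\lambda^\star \le \rho^{(1-p)/p} G$. The approach combines the concavity of the local utilities (Lemma~\ref{lem::concave_V}) with their H\"older continuity (Lemma~\ref{lem::continuity_V}), and reads off a bound on a supergradient at a suitable budget point.

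First we would establish existence of a multiplier. Problem~\eqref{eq::master} maximizes a concave function (Lemma~\ref{lem::concave_V}) over a polyhedron. The allocation $b_i = \rho/2$ for all $i$ is strictly feasible, so Slater's condition holds. Hence there is $\lambda^\star \ge 0$ and an optimal $b^\star$ with $\frac1N\sum_i b_i^\star \le \rho$ such that $b_i^\star \in \argmax_{b_i \ge 0}\{V_i(b_i) - \lambda^\star b_i\}$ for every $i$, with complementary slackness. If $\lambda^\star = 0$ there is nothing to prove, so assume $\lambda^\star > 0$. Then the budget is tight, $\frac1N\sum_i b_i^\star = \rho$, and therefore some index $j$ has $b_j^\star \ge \rho$.

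Next we use the optimality of $b_j^\star$ for the penalized local problem. Comparing with the choice $b_j = 0$ gives
\[
\lambda^\star b_j^\star \le V_j(b_j^\star) - V_j(0) \le G_j (b_j^\star)^{1/p},
\]
where the second inequality is the bound $V_j(\delta) - V_j(0) \le G_j \delta^{1/p}$ derived in the proof of Lemma~\ref{lem::continuity_V}. Dividing by $b_j^\star > 0$ yields
\[
\lambda^\star \le G_j (b_j^\star)^{1/p - 1}.
\]
Since $p \ge 1$, the exponent $1/p - 1$ is nonpositive, so the right-hand side is nonincreasing in $b_j^\star$. Using $b_j^\star \ge \rho$ gives $\lambda^\star \le G_j \rho^{(1-p)/p} \le G\rho^{(1-p)/p} = U_\lambda$. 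This is exactly the claimed bound.

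The main obstacle is the existence step. We need to make sure that a KKT pair exists with $b^\star$ an actual maximizer of each Lagrangian subproblem over $b_i \ge 0$. In Algorithm~\ref{alg::budget_eval} the box $[0, N\rho]$ is used instead, but the argument above only compares against $b_j = 0$, so either domain works. Attainment of~\eqref{eq::master} itself follows from the continuity in Lemma~\ref{lem::continuity_V} over the compact feasible set. The concave-programming saddle point theorem (e.g.\ \cite[Theorem~28.3]{rockafellar1970convex}) then provides $\lambda^\star$. Finiteness of $G_i$ follows from local Lipschitzness of the concave $\ell_k$, as noted before Lemma~\ref{lem::continuity_subroutine}.
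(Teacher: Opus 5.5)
Your proof is correct. It runs on the same estimate as the paper's: any maximizer $b_i$ of $V_i(b_i)-\lambda b_i$ satisfies $\lambda b_i\le V_i(b_i)-V_i(0)\le G_i b_i^{1/p}$, obtained by comparing with $b_i=0$ and using Lemma~\ref{lem::continuity_V}. What differs is how that estimate becomes a bound on the multiplier.

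The paper argues forward. It defines $\lambda^\star$ as the smallest $\lambda\ge 0$ whose Lagrangian maximizers satisfy $\frac1N\sum_i b_i^\star(\lambda)\le\rho$, and appeals to Everett's theorem~\eqref{eq::dual-opt} for its optimality. It then checks that at $\lambda=U_\lambda$ every maximizer already has $b_i^\star(U_\lambda)\le\rho$. For $p>1$ this uses $b_i^\star(\lambda)\le (G_i/\lambda)^{p/(p-1)}$; for $p=1$ it uses $b_i^\star(\lambda)=0$ when $\lambda\ge G_i$. Hence $\lambda^\star\le U_\lambda$.

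You argue backward. You take a genuine KKT pair from Lagrangian duality, and complementary slackness forces a tight budget when $\lambda^\star>0$. Pigeonhole then gives some $b_j^\star\ge\rho$, and the estimate bounds $\lambda^\star$ directly.

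Your route is cleaner on the duality side:
\begin{itemize}
\item It needs no existence argument for a smallest budget-feasible multiplier.
\item It needs no tie-breaking rule in the argmax. The paper implicitly needs the smallest-maximizer convention at $\lambda=G_i$ when $p=1$.
\item It does not rely on Everett's theorem, which by itself only certifies optimality at the budget level the maximizer actually uses.
\item It treats $p=1$ and $p>1$ uniformly, and it shows that \emph{every} KKT multiplier lies in $[0,U_\lambda]$.
\end{itemize}
A small citation point: existence of the multiplier is Rockafellar's Theorem~28.2, not 28.3. Since the dualized constraint is affine, 28.2 only needs a feasible point in $\RR^N_{++}$, which your $b_i=\rho/2$ provides. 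Theorem~28.3 then gives the KKT conditions for your optimal $b^\star$.

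The paper's route has one advantage: it directly yields what Algorithm~\ref{alg::master_eval} needs at initialization, namely that the Lagrangian allocations at $\lambda^{(2)}=U_\lambda$ are budget-feasible. From your conclusion, that fact requires either a monotonicity argument in $\lambda$ or rerunning your inequality at $\lambda=U_\lambda$.
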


\begin{proof}
Let $\lambda^\star$ be the smallest non-negative scalar such that $\frac{1}{N}\sum_{i=1}^N b_i^\star(\lambda^\star)\leq \rho$. In light of \eqref{eq::dual-opt}, this choice of $\lambda^\star$ is indeed optimal.

For any dual multiplier $\lambda > 0$ and index $i \in [N]$, the optimal budget $b_i^\star(\lambda)$ maximizes the penalized objective $V_i(b) - \lambda b$ over $\RR_+$. Since $b = 0$ is a trivially feasible choice, the optimal objective value must be at least that of the zero-budget allocation:
\[
V_i(b_i^\star(\lambda)) - \lambda b_i^\star(\lambda) \ge V_i(0).
\]
Rearranging this inequality and invoking the H\"{o}lder continuity established in Lemma~\ref{lem::continuity_V}, we obtain:
\[
\lambda b_i^\star(\lambda) \le V_i(b_i^\star(\lambda)) - V_i(0) \le G_i \left(b_i^\star(\lambda)\right)^{1/p}
\]
If $p=1$, the inequality simplifies to $\lambda b_i^\star(\lambda) \le G_i b_i^\star(\lambda)$. Thus, for any $\lambda \ge G_i$, we must have $b_i^\star(\lambda) = 0 \le \rho$. 
If $p>1$, we can divide by $\left(b_i^\star(\lambda)\right)^{1/p}$ (assuming $b_i^\star > 0$, as otherwise the bound holds trivially) and solve for $b_i^\star(\lambda)$ to yield $b_i^\star(\lambda) \le ({G_i}/{\lambda})^{\frac{p}{p-1}} \leq ({G}/{\lambda})^{\frac{p}{p-1}},$
for every $i \in [N]$. Evaluating this bound at $\lambda = U_\lambda = \rho^{\frac{1-p}{p}} G$ leads to:
\[
b_i^\star(U_\lambda) \leq \left(\frac{G}{\rho^{\frac{1-p}{p}} G}\right)^{\frac{p}{p-1}} = \rho\implies \frac{1}{N} \sum_{i=1}^{N} b_i^\star(U_\lambda) \leq \rho
\]
for all $p \geq 1$. Recalling that $\lambda^\star$ is defined as the smallest non-negative scalar satisfying $\frac{1}{N}\sum_{i=1}^N b_i^\star(\lambda^\star)\leq \rho$, we obtain $\lambda^\star \leq U_\lambda$.
\end{proof}

We are now ready to prove Lemma~\ref{lem::Bi_eval}.

\begin{proof}[Proof of Lemma~\ref{lem::Bi_eval}]
Due to the concavity and non-decreasing properties of $V_i(b_i)$ established in Lemma~\ref{lem::concave_V}, for any given dual variable $\lambda \ge 0$, the objective function $U(b_i) := V_i(b_i) - \lambda b_i$ is concave and unimodal on the feasible budget domain $[0, \rho N]$. Because $\argmax_{b_i \in [0, \rho N]} \{ V_i(b_i) - \lambda b_i \}$ is a closed set, we let $b_i^\star(\lambda)$ denote the smallest element of this optimal set. In the following, we use the constants $(C,G,R)$ defined in~\eqref{eq:constants} to establish the lemma. 

By Lemma~\ref{lem::U_lambda}, we restrict the search space of the dual multiplier $\lambda$ to $[0, U_\lambda]$. For any fixed $i \in [N]$ and dual candidate $\lambda \in [0, U_{\lambda}]$, let $\hat{b}_i(\lambda)$ be the output of the golden-section search presented in Algorithm~\ref{alg::budget_eval}. Let $b_i$ and $b_i'$ be two distinct budget points evaluated by the algorithm. By Lemma~\ref{lem::evaluate_vi}, the nested evaluation $\hat{V}_i(b_i)$ has a bounded approximation error. Therefore, the evaluated dual objective $\hat{U}(b_i) := \hat{V}_i(b_i) - \lambda b_i$ satisfies:
$$
\left| \hat{U}(b_i) - U(b_i) \right| \le 
\mathcal{E}_{\text{eval}} :=
\left( 9 \epsilon + 6 G \eta^{1/p} \right) + 
\begin{cases} 
R \eta + 2 G (\rho N)^{1/p} \eta^{1 - 1/p} & \text{if } p > 1, \\ 
(R + 2C) \eta + 2 g (\rho N)^r \eta^{1 - r} & \text{if } p = 1. 
\end{cases}
$$
Mirroring the established logic, the algorithm correctly discards suboptimal subintervals whenever the true difference in function values strictly exceeds twice the maximum evaluation error, $2\mathcal{E}_{\text{eval}}$. To see this, suppose without loss of generality that $U(b_i) > U(b_i') + 2\mathcal{E}_{\text{eval}}$. Then, the $\mathcal{E}_{\text{eval}}$-accurate evaluations guarantee
$$ 
\hat{U}(b_i) \geq U(b_i) - \mathcal{E}_{\text{eval}} > U(b_i') + \mathcal{E}_{\text{eval}} \geq \hat{U}(b_i'), 
$$
ensuring the algorithm correctly shrinks the interval while retaining the true optimal solution $b_i^\star(\lambda)$. Conversely, an inexact discard can only occur if the values are indistinguishable within this tolerance, introducing an algorithmic suboptimality gap of at most $2\mathcal{E}_{\text{eval}}$.

The search terminates and returns a final interval of length $\eta$ containing both the returned point $\hat{b}_i(\lambda)$ and the exact maximum $b_i^\star(\lambda)$. By Lemma~\ref{lem::continuity_V}, $V_i(b_i)$ is $(1/p)$-H\"{o}lder continuous for $p \ge 1$. Thus, the objective function variation within this final interval introduces an additional continuous resolution error bounded by:
$$
G_i \eta^{1/p} + \lambda \eta \le G \eta^{1/p} + U_{\lambda} \eta. 
$$
Crucially, the total suboptimality gap is given by the maximum of the discard error and the final resolution error. For simplicity, we upper bound it by their sum, yielding
$$ 
U(b_i^\star(\lambda)) - U(\hat{b}_i(\lambda)) \le 
18\epsilon + 13 G \eta^{1/p} + U_{\lambda} \eta + 
\begin{cases}
2R \eta + 4 G (\rho N)^{1/p} \eta^{1 - 1/p} & \text{if } p > 1, \\ 
2(R + 2C) \eta + 4 g (\rho N)^r \eta^{1 - r} & \text{if } p = 1. 
\end{cases} 
$$
Consequently, the number of golden-section iterations to achieve this error is $O(\log(\rho N / \eta))$. Combined with the complexity of the nested evaluations derived in Lemma~\ref{lem::evaluate_vi}, Algorithm~\ref{alg::budget_eval} runs in time:
$$
\begin{aligned}
&O\!\left( K^2 \cdot \max_{k\in [K]}\mathsf{Cost}_{k,\epsilon} \cdot \log\!\left(\frac{\rho N}{\eta}\right) \cdot \log\!\left(\frac{1}{\eta} \right) \cdot \log\!\left(\frac{\rho N}{\eta} \right) \right) \\
=\, &O\!\left(K^2 \cdot \mathsf{Cost}_{\epsilon} \cdot \log^2\!\left(\frac{\rho N}{\eta}\right) \cdot \log\!\left(\frac{1}{\eta} \right) \right).
\end{aligned}
$$
The proof is completed by bounding the final evaluation error:
$$
\begin{aligned}
&\quad\, \left| \left(\hat{V}_i(\hat b_i (\lambda)) - \lambda \hat b_i (\lambda)\right) - \left(V_i(b_i^\star(\lambda)) - \lambda b_i^\star(\lambda)\right) \right|\\
&\leq  \left|\hat{V}_i(\hat b_i (\lambda)) - V_i(\hat b_i (\lambda))\right| + \left| U(\hat b_i (\lambda)) - U(b_i^\star(\lambda)) \right|\\
&\leq 27\epsilon + 19 G \eta^{1/p} + U_{\lambda} \eta + 
\begin{cases}
3R \eta + 6 G (\rho N)^{1/p} \eta^{1 - 1/p} & \text{if } p > 1, \\ 
3(R + 2C) \eta + 6 g (\rho N)^r \eta^{1 - r} & \text{if } p = 1. 
\end{cases} 
\end{aligned}
$$
This completes the proof.
\end{proof}

\subsection{Proof of Theorem~\ref{thm::error_analysis}}
\label{proof::thm::error_analysis}

Let $\lambda$ be a dual candidate evaluated during the bisection search in Algorithm~\ref{alg::master_eval}. For each sample $i \in [N]$, Algorithm~\ref{alg::master_eval} queries the inner allocation oracle, which returns an approximate budget $\hat{b}_i(\lambda)$. From the proof of Lemma~\ref{lem::Bi_eval}, we have
$$
V_i(\hat{b}_i(\lambda)) - \lambda \hat{b}_i(\lambda) \ge \max_{b \ge 0} \{V_i(b) - \lambda b\} - \mathcal{E}_{\text{inner}},
$$
where 
$$
\mathcal{E}_{\text{inner}} = 18\epsilon + 13 G \eta^{1/p} + U_{\lambda} \eta + 
\begin{cases}
2R \eta + 4 G (\rho N)^{1/p} \eta^{1 - 1/p} & \text{if } p > 1, \\ 
2(R + 2C) \eta + 4 g (\rho N)^r \eta^{1 - r} & \text{if } p = 1. 
\end{cases} 
$$
Denoting the average approximate budget $\hat{r}(\lambda) := \frac{1}{N} \sum_{i=1}^N \hat{b}_i(\lambda)$, we obtain
$$
\frac{1}{N} \sum_{i=1}^N V_i(\hat{b}_i(\lambda)) - \lambda \hat{r}(\lambda) \ge \frac{1}{N} \sum_{i=1}^N \max_{b \ge 0} \{V_i(b) - \lambda b\} - \mathcal{E}_{\text{inner}}.
$$
Defining the exact dual function $g(\lambda) := \lambda \rho + \frac{1}{N} \sum_{i=1}^N \max_{b \ge 0} \{V_i(b) - \lambda b\}$, the above inequality leads to the following primal suboptimality bound:
\begin{equation}
\label{eq::practical_suboptimality_gap}
\frac{1}{N} \sum_{i=1}^N V_i(\hat{b}_i(\lambda)) \ge g(\lambda) - \lambda(\rho - \hat{r}(\lambda)) - \mathcal{E}_{\text{inner}}.
\end{equation}
The outer bisection search narrows the interval $[\lambda^{(1)}, \lambda^{(2)}]$ based on whether the average budget $\hat{r}(\lambda)$ exceeds $\rho$. Let $\lambda_l$ and $\lambda_u$ be the final bounds satisfying $\lambda_u - \lambda_l \le \eta$. Because the algorithm dynamically assigns the bounds based on the threshold $\rho$, the final iterations guarantee $\hat{r}(\lambda_u) \le \rho \le \hat{r}(\lambda_l)$. Algorithm~\ref{alg::master_eval} defines the interpolation weight $\theta := \frac{\rho - \hat{r}(\lambda_u)}{\hat{r}(\lambda_l) - \hat{r}(\lambda_u)}$ and outputs the convex combination $\bar{b}_i := \theta \hat{b}_i(\lambda_l) + (1-\theta) \hat{b}_i(\lambda_u)$. Noting
$$
\frac{1}{N} \sum_{i=1}^N \bar{b}_i = \theta \hat{r}(\lambda_l) + (1-\theta) \hat{r}(\lambda_u) = \rho,
$$
the final output $\{\bar{b}_i\}_{i=1}^{N}$ is indeed feasible. From Lemma~\ref{lem::concave_V}, the true value function $V_i$ is concave. Applying Jensen's inequality to the interpolated allocation and averaging over all $N$ samples yields
$$
\frac{1}{N} \sum_{i=1}^N V_i(\bar{b}_i) \ge \theta \left( \frac{1}{N} \sum_{i=1}^N V_i(\hat{b}_i(\lambda_l)) \right) + (1-\theta) \left( \frac{1}{N} \sum_{i=1}^N V_i(\hat{b}_i(\lambda_u)) \right).
$$
Substituting the primal suboptimality bound~\eqref{eq::practical_suboptimality_gap} for both $\lambda_l$ and $\lambda_u$ gives
\begin{align}\label{eq::error}
    \frac{1}{N} \sum_{i=1}^N V_i(\bar{b}_i) \ge \theta \big[ g(\lambda_l) - \lambda_l(\rho - \hat{r}(\lambda_l)) \big] + (1-\theta) \big[ g(\lambda_u) - \lambda_u(\rho - \hat{r}(\lambda_u)) \big] - \mathcal{E}_{\text{inner}}.
\end{align}
On the other hand, letting
$$
T := \theta(\hat{r}(\lambda_l) - \rho) = \left(\frac{\rho - \hat{r}(\lambda_u)}{\hat{r}(\lambda_l) - \hat{r}(\lambda_u)}\right)(\hat{r}(\lambda_l) - \rho) = (1-\theta)(\rho - \hat{r}(\lambda_u)),
$$
it follows that $T \in [0, \rho]$. Substituting $T$ in \eqref{eq::error} simplifies the bound to
$$
\frac{1}{N} \sum_{i=1}^N V_i(\bar{b}_i) \ge \theta g(\lambda_l) + (1-\theta) g(\lambda_u) - T(\lambda_u - \lambda_l) - \mathcal{E}_{\text{inner}}.
$$
By weak duality, the dual function $g(\lambda)$ provides a direct upper bound on the optimal primal value, i.e., $g(\lambda) \ge \frac{1}{N}\sum_{i=1}^N V_i(b_i^\star)$ for any $\lambda \ge 0$. Applying this inequality to $g(\lambda_l)$ and $g(\lambda_u)$, and noting $T \le \rho$, we obtain
$$
\frac{1}{N} \sum_{i=1}^N V_i(\bar{b}_i) \ge \frac{1}{N}\sum_{i=1}^N V_i(b_i^\star) - \rho(\lambda_u - \lambda_l) - \mathcal{E}_{\text{inner}}.
$$
By invoking the specific bisection tolerance $\lambda_u - \lambda_l \le \eta$, and substituting in $V^\star = \frac{1}{N}\sum_{i=1}^N V_i(b_i^\star)$ as the optimal value of~\eqref{eq::master}, we obtain the final objective guarantee:
$$
\frac{1}{N} \sum_{i=1}^N V_i(\bar{b}_i) \ge V^\star - \rho \eta - \mathcal{E}_{\text{inner}}.
$$
The required number of bisection iterations to achieve the above guarantee is $O(\log(U_{\lambda}/ \eta))$.

In each iteration, the algorithm calls the subproblem allocation (Algorithm~\ref{alg::budget_eval}) for all $N$ samples. Since the budget constraint limits any individual allocation to $0\leq b_i \le \rho N$, the length of the inner search interval is bounded by $\rho N$. Multiplying the outer bisection steps by the $N$ sample calls and the runtime of the inner allocation yields the final stated complexity. 

It remains to establish the second claim, namely the feasibility and objective guarantee of the constructed $2N$-point distribution $\QQ_{2N}$. Algorithm~\ref{alg::master_eval} performs a final call to \texttt{LocalEval} with the interpolated budgets $\bar{b}_i$, which returns the approximate values $\hat{V}_i(\bar{b}_i)$ and the local optimizers $\hat{s}_i$. By construction, the distribution $\QQ_{2N}$ allocates mass $\hat{\alpha}_{ij}$ and perturbations $\hat{v}_{ij}$ (for $j \in \{1, 2\}$) to exactly match these local optimizers. Therefore, its expected loss is exactly $\mathbb{E}_{z \sim \QQ_{2N}} [\ell(z)] = \frac{1}{N}\sum_{i=1}^N \hat{V}_i(\bar{b}_i)$, and its optimal transport cost satisfies 
$$
\frac{1}{N}\sum_{i=1}^N \left( \hat{\alpha}_{i\hat{k}_{i1}} c(\hat{z}_i + \hat{v}_{i\hat{k}_{i1}}, \hat{z}_i) + \hat{\alpha}_{i\hat{k}_{i2}} c(\hat{z}_i + \hat{v}_{i\hat{k}_{i2}}, \hat{z}_i) \right) \leq \frac{1}{N}\sum_{i=1}^N \bar{b}_i = \rho.
$$
This confirms that $\QQ_{2N} \in \cP$. Finally, applying Lemma~\ref{lem::evaluate_vi} to the final \texttt{LocalEval} call guarantees that $\hat{V}_i(\bar{b}_i) \ge V_i(\bar{b}_i) - \mathcal{E}_{\text{eval}}$. Substituting our lower bound on the true interpolated values yields:
$$
\mathbb{E}_{z \sim \QQ_{2N}} [\ell(z)] = \frac{1}{N} \sum_{i=1}^N \hat{V}_i(\bar{b}_i) \ge \frac{1}{N} \sum_{i=1}^N V_i(\bar{b}_i) - \mathcal{E}_{\text{eval}} \ge V^\star - \rho \eta - \mathcal{E}_{\text{inner}} - \mathcal{E}_{\text{eval}}.
$$
Since $\mathcal{E}_{\text{eval}}$ and $\mathcal{E}_{\text{inner}}$ both satisfy the asymptotic scaling in the theorem statement, the total suboptimality gap of the returned distribution $\QQ_{2N}$ is bounded as claimed. $\hfill\qed$

\subsection{Proof of Lemma~\ref{lem::N+n+1_lower_bound}}
\label{app:N+n+1-proof}

To prove Lemma~\ref{lem::N+n+1_lower_bound}, it suffices to construct instances of the dual DRO problem where the number of loss components $K$ is sufficiently large (specifically, $K \ge n+2$) such that $\min\{N+n+1, KN\} = N+n+1$, and then demonstrate that any optimal least-favorable distribution requires at least $N+n+1$ unique atoms. 
The core geometric intuition behind our construction is to design a scenario where the dual equilibrium condition (i.e., the stationarity of the inner primal decision) forces the adversary to shatter a single empirical sample into exactly $n+2$ distinct points. To achieve the $N+n+1$ bound for a general number of samples $N$, we must design the domain and the loss function such that the remaining $N-1$ empirical samples are completely rigid. That is, any transport of their mass must either strictly suboptimize the objective or violate the domain boundaries.

\begin{lemma}
\label{lem::lower_bound_N3}
Consider the $n$-dimensional setting with the feasible set $\mathcal{X} = \{x \in \mathbb{R}^n : \norm{x}_2 \leq 1\}$ and the support set $\mathcal{Z} = \{z \in \mathbb{R}^n : \norm{z}_2 \leq 1\}$. Let $N\ge 3$ and suppose the reference distribution is $\hat{\mathbb{P}}_N = \frac{1}{N}\sum_{i=1}^N \delta_{\hat{z}_i}$, where $\hat{z}_1 = 0$ and $\{\hat{z}_i\}_{i=2}^{N}$ are distinct boundary points satisfying $\norm{\hat{z}_i}_2 = 1$ and $\sum_{i=2}^N \hat{z}_i = 0$. Set the radius $\rho = 1 / (2N)$, and the transport cost $c(z, \hat{z}) = \norm{z - \hat{z}}_2$. Let $\{v_1, \dots, v_{n+1}\}\subset \mathbb{R}^n$ denote the vertices of a regular $n$-simplex centered at the origin with $\norm{v_k}_2 = 1$ for $1\leq k \leq n+1$. Assume the boundary samples $\{\hat{z}_i\}_{i=2}^N$ and the vertices $\{v_k\}_{k=1}^{n+1}$ are disjoint, that is, their maximum inner product $M := \max_{i,\, k} \hat{z}_i^\top v_k$ satisfies $M < 1$. Choose constants $\gamma$ and $\epsilon$ satisfying $M < \gamma < 1$ and $0 < \epsilon < \min\{1-\gamma,\,\gamma-M\}$. Define the $N+n+1$ components as
\begin{align*}
\ell_k(x, z) &= \norm{x}_2^2 - 2x^\top z + v_k^\top z - \epsilon, \quad k = 1, 2, \dots, n+1,\\
\ell_{n+1+k}(x, z) &= \norm{x}_2^2 - 2x^\top z + \gamma \hat{z}_k^\top z, \quad k = 1, 2, \dots, N,
\end{align*}
and let $\ell(x, z) = \max_{1 \leq k \leq N+n+1} \ell_k(x, z)$. Then, the least-favorable distribution $\QQ^\star_{\mathrm{dual}}$ of the dual DRO problem~\eqref{eq::dual_DRO} requires a support of at least $N+n+1$ distinct atoms. Moreover, the unique optimal least-favorable distribution attaining this minimum support is
\begin{align}\label{eq::Qdual-example}
    \QQ^\star_{\mathrm{dual}} = \frac{1}{N} \underbrace{\left[ \frac{1}{2}\delta_{0} + \frac{1}{2(n+1)} \sum_{k=1}^{n+1} \delta_{v_k} \right]}_{\mathrm{Split\,of}\,\hat{z}_1} + \frac{1}{N} \sum_{i=2}^N \delta_{\hat{z}_i}.
\end{align}
\end{lemma}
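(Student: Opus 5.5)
The plan is to first eliminate the inner minimization over $x$ in closed form, which turns the dual DRO problem~\eqref{eq::dual_DRO} into a concave maximization over $\cP$. Then I would prove an upper bound on its value through a transport-rate (Lagrangian) argument. Finally I would show that equality in that bound pins down the optimizer as~\eqref{eq::Qdual-example}.

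\textbf{Step 1 (reduce to a single-level problem).} All $N+n+1$ components share the term $\norm{x}_2^2-2x^\top z$. Hence $\ell(x,z)=\norm{x}_2^2-2x^\top z+h(z)$ with
\[
h(z):=\max\Big\{\max_{k\in[n+1]} v_k^\top z-\epsilon,\ \max_{i\in[N]}\gamma\,\hat z_i^\top z\Big\}.
\]
Because $\hat z_1=0$, $h\ge 0$ and $h(0)=0$. For fixed $\bQ$ supported in $\cZ$, $\min_{x\in\cX}\EE_{\bQ}[\ell(x,z)]$ is attained at $x=\EE_{\bQ}[z]$, which lies in the unit ball, so
\[
\min_{x\in\cX}\EE_{\bQ}[\ell(x,z)]=\EE_{\bQ}[h(z)]-\norm{\EE_{\bQ}[z]}_2^2 .
\]
The dual problem thus becomes $\max_{\bQ\in\cP}\EE_{\bQ}[h]-\norm{\EE_{\bQ} z}_2^2$. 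I would also record three values: $h(\hat z_i)=\gamma$ for $i\ge2$ (because $v_k^\top\hat z_i-\epsilon\le M-\epsilon<\gamma$), $h(v_k)=1-\epsilon$ (because $\gamma\hat z_i^\top v_k\le\gamma M<1-\epsilon$), and $h(0)=0$.

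\textbf{Step 2 (rate bound).} I will show that for every sample $i$ and every $z\in\cZ$,
\[
h(z)\le h(\hat z_i)+(1-\epsilon)\norm{z-\hat z_i}_2 ,
\]
with equality characterized as follows. For $i=1$, each $v_k$-piece gives $v_k^\top z-\epsilon\le\norm{z}-\epsilon\le(1-\epsilon)\norm{z}$ since $\norm z\le1$, with equality iff $z=v_k$. Each $\gamma$-piece gives at most $\gamma\norm z<(1-\epsilon)\norm z$ for $z\neq0$. For $i\ge2$, write $d:=\norm{z-\hat z_i}\le 2$. A $\gamma$-piece gives $\gamma\hat z_j^\top z\le\gamma=h(\hat z_i)$. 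A $v_k$-piece gives $v_k^\top z-\epsilon\le M+d-\epsilon$, and
\[
M+d-\epsilon<\gamma+(1-\epsilon)d \iff \epsilon d<\gamma-M+\epsilon .
\]
This last inequality holds for all $d\le2$ because $\gamma-M>\epsilon$. Hence for $i\ge 2$ the inequality is strict whenever $d>0$.

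Integrating against any feasible coupling (disintegrated as in Proposition~\ref{prop::dual-nonconvex-equivalence}) and using $\frac1N\sum_i\EE_{\bQ_i}\norm{z-\hat z_i}\le\rho=\frac1{2N}$ gives
\[
\EE_{\bQ}[h]\le\frac{(N-1)\gamma}{N}+\frac{1-\epsilon}{2N}.
\]
Together with $-\norm{\EE z}^2\le0$, this bounds the dual value. The distribution~\eqref{eq::Qdual-example} attains the bound: its mean is $0$ because $\sum_k v_k=0$ and $\sum_{i\ge2}\hat z_i=0$, and it uses exactly the budget $\frac1N\cdot\frac12$. So~\eqref{eq::Qdual-example} is optimal.

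\textbf{Step 3 (uniqueness and support count).} Any optimal $\bQ$ must make every inequality above tight. From $i\ge2$ strictness, every boundary sample keeps all its mass at $\hat z_i$. Since $1-\epsilon>0$, the whole budget is used. The mass of $\hat z_1$ that moves must land only on $\{v_k\}$, so exactly half of $\hat z_1$'s mass sits on the vertices with some weights $w_k$, $\sum_k w_k=\tfrac12$, and the rest stays at $0$. Tightness of $\norm{\EE z}^2=0$ then forces $\sum_k w_kv_k=0$. Since $v_1,\dots,v_{n+1}$ are affinely independent with $\sum_kv_k=0$, the only such weights are $w_k=\frac1{2(n+1)}$. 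Finally, the atoms $0$, $v_1,\dots,v_{n+1}$, $\hat z_2,\dots,\hat z_N$ are pairwise distinct: $M<1$ separates $\hat z_i$ from $v_k$, and the $\hat z_i$ are distinct by assumption. This gives exactly $N+n+1$ atoms and the uniqueness claim.

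I expect the main obstacle to be the rigidity of the boundary samples in Step 2. A naive Lipschitz bound only gives rate $1$, which does not beat $1-\epsilon$. The fix above therefore relies on combining the diameter bound $d\le2$ of $\cZ$ with the condition $\epsilon<\gamma-M$. The Step 1 reduction and the uniqueness argument are routine by comparison.
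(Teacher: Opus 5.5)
Your proposal is correct and follows essentially the same route as the paper. You eliminate $x$ via $x=\EE_{\bQ}[z]$. You bound the transport efficiency by $1-\epsilon$ from $\hat z_1$, tight only at the $v_k$, and strictly below $1-\epsilon$ from each boundary sample, using $\norm{z-\hat z_i}_2\le 2$ together with $\epsilon<\gamma-M$. You then force a zero mean, which pins down the uniform vertex weights by affine independence. Your explicit integration of the pointwise rate bound against the disintegrated coupling, and your checks that $h(v_k)=1-\epsilon$ and that the $N+n+1$ atoms are pairwise distinct, make rigorous some steps that the paper's efficiency argument only sketches.
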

\begin{proof}
Define the surrogate component functions $g_k: \mathcal{Z} \rightarrow \mathbb{R}$ as 
\begin{align*}
g_k(z) &= v_k^\top z - \epsilon, \quad k = 1, 2, \dots, n+1,\\
g_{n+1+k}(z) &= \gamma \hat{z}_k^\top z, \quad k = 1, 2, \dots, N,
\end{align*}
and let $g(z) = \max_{1\leq k \leq N+n+1} g_k(z)$.

Consider the inner minimization of the dual DRO problem~\eqref{eq::dual_DRO} for the loss function $\ell(x,z) = \norm{x}_2^2 - 2x^\top z + g(z)$. For any distribution $\mathbb{Q}$, the unconstrained minimizer of $\EE_{z\sim \QQ}[\ell(x,z)]$ with respect to $x$ is $x^\star = \mathbb{E}_{z \sim \mathbb{Q}}[z]$. Because the support of $\mathbb{Q}$ is restricted to $\mathcal{Z}$ and $\mathcal{Z} = \mathcal{X}$, this expectation lies within $\mathcal{X}$, and is hence feasible. Substituting $x^\star$ yields the maximization problem:
\begin{equation}
\label{eq::maximin_example}
\max_{\bQ \in \cP} \left\{ \min_{x \in \mathcal{X}} \mathbb{E}_{z \sim \mathbb{Q}} [\ell(x,z)] \right\} = \max_{\bQ \in \cP} \left\{ \mathbb{E}_{z \sim \mathbb{Q}} [g(z)] - \norm{\EE_{z\sim\bQ}[z]}_2^2 \right\}   
\end{equation}
To solve the above maximization problem, we prove the existence of a distribution that jointly achieves the maximum of $\mathbb{E}_{z \sim \mathbb{Q}} [g(z)]$ and the minimum of $\norm{\EE_{z\sim\bQ}[z]}_2^2$, i.e., $\norm{\EE_{z\sim\bQ}[z]}_2^2=0$. 

We first focus on maximizing $\mathbb{E}_{z \sim \mathbb{Q}} [g(z)]$.
To identify the optimal transport plan that achieves this maximum, we evaluate the objective gain per unit of transport cost (the OT efficiency) for moving mass away from the empirical atoms.
\begin{enumerate}
    \item Efficiency of transporting mass from $\hat{z}_1 = 0$: Since $g(\hat z_1) = g(0) = 0$, by the Cauchy-Schwarz inequality, the transport efficiency for any target location $z \neq 0$ is
    $$
    \frac{g(z) - g(\hat z_1)}{\norm{z}_2} = \frac{\max\left\{\max_{k}\left\{v_k^\top z - \epsilon \right\},\, \max_{j} \{\gamma \hat{z}_j^\top z\} \right\} }{\norm{z}_2} \leq \max\left\{1 - \frac{\epsilon}{\norm{z}_2},\, \gamma \right\}.$$Since $z \in \mathcal{Z}$, we have $\norm{z}_2 \leq 1$. Combined with our assumption $\gamma < 1 - \epsilon$, this implies:
    $$
    \max\left\{1 - \frac{\epsilon}{\norm{z}_2},\, \gamma \right\} \leq 1 - \epsilon.
    $$
    This bound is tight. The maximum efficiency of exactly $1 - \epsilon$ is attained if and only if $z = v_k$, for any $1\leq k\leq n+1$.
    \item Efficiency of transporting mass from $\hat{z}_i$ ($2 \le i \le N$): We first establish the baseline objective value $g(\hat{z}_i)$. Because $\norm{\hat{z}_i}_2 = 1$ and $v_k^\top \hat{z}_i \leq M$ for all $1\leq k\leq n+1$, we have
    $$
    g(\hat{z}_i) = \max\left\{\max_k \left\{ v_k^\top \hat{z}_i - \epsilon\right\},\, \max_j \left\{\gamma \hat{z}_j^\top \hat{z}_i\right\}\right\} \leq \max\left\{M - \epsilon,\, \gamma\right\}.
    $$
    By our choice of $\epsilon < \gamma - M$, it holds that $M - \epsilon < \gamma$. Furthermore, evaluating the $(n+1+i)$-th component at $\hat z_i$ yields exactly $g_{n+1+i}(\hat{z}_i) = \gamma \hat{z}_i^\top \hat{z}_i = \gamma$. Thus, $g(\hat{z}_i) = \gamma$. For any target location $z \neq \hat{z}_i$, we have 
    $$
    \begin{aligned}
    g(z) - \gamma &\leq \max\left\{\max_k \left\{ v_k^\top (z - \hat{z}_i) + v_k^\top \hat{z}_i - \epsilon \right\},\, \max_j \left\{ \gamma \hat{z}_j^\top (z - \hat{z}_i) + \gamma \hat{z}_j^\top \hat{z}_i \right\} \right\} - \gamma \\
    &\leq \max\left\{\norm{z - \hat{z}_i}_2 + M - \epsilon,\, \gamma\norm{z - \hat{z}_i}_2 + \gamma \right\} - \gamma\\
    &= \max\left\{\norm{z - \hat{z}_i}_2 + M - \gamma -\epsilon,\, \gamma\norm{z - \hat{z}_i}_2 \right\}.
    \end{aligned}
    $$
    Dividing by the transport cost $\norm{z - \hat{z}_i}_2$ yields the following transport efficiency from $\hat z_i$:
    $$
    \frac{g(z) - g(\hat{z}_i)}{\norm{z-\hat{z}_i}_2} \leq \max\left\{1 - \frac{\gamma + \epsilon - M}{\norm{z - \hat{z}_i}_2},\, \gamma \right\}.
    $$
    Because the maximum diameter of $\mathcal{Z}$ is 2, $\norm{z - \hat{z}_i}_2 \leq 2$. Therefore, the first term in the maximum is bounded by $1 - \frac{1}{2}(\gamma + \epsilon - M)$. By our strict condition $\epsilon < \gamma - M$, this value is strictly less than $1 - \epsilon$. Consequently, the transport efficiency from any boundary sample $\hat{z}_i$ is strictly less than $1 - \epsilon$.
\end{enumerate}

This establishes that the objective gain of moving mass from the origin to $\{v_k\}_{k=1}^{n+1}$ strictly dominates the gain from moving any mass away from the boundary samples $\{\hat{z}_i\}_{i=2}^N$. Therefore, to maximize \eqref{eq::maximin_example}, two conditions must be satisfied simultaneously:
\begin{itemize}
    \item[1.] Maximizing $\mathbb{E}_{z \sim \mathbb{Q}} [g(z)]$: The total transport budget $\rho = 1/(2N)$ must be exhausted entirely on moving mass from $\hat{z}_1 = 0$ to $\{v_k\}_{k=1}^{n+1}$, leaving the remaining empirical samples $\{\hat{z}_i\}_{i=2}^N$ rigidly anchored.
    \item[2.] Minimizing $\norm{\EE_{z\sim\bQ}[z]}_2^2$: To avoid the non-positive penalty $-\norm{\mathbb{E}_{z\sim \mathbb{Q}}[z]}_2^2$ in~\eqref{eq::maximin_example}, the least-favorable distribution must satisfy $\mathbb{E}_{z\sim \mathbb{Q}} [z] = 0$.
\end{itemize}
The proposed distribution $\mathbb{Q}^\star_{\mathrm{dual}}$ in \eqref{eq::Qdual-example} trivially satisfies these conditions and attains a support size of $N+n+1$. To prove it is the unique distribution satisfying these two conditions, consider an arbitrary least-favorable distribution $\mathbb{Q}$. Due to its optimality, this distribution must satisfy both Conditions 1 and 2. Due to Condition 1, it must take the form
$$
\mathbb{Q} = \left[ q_0\delta_0 + \sum_{k=1}^{n+1}q_k \delta_{v_k} \right] + \frac{1}{N} \sum_{i=2}^N \delta_{\hat{z}_i},
$$
where $q_k \geq 0$ for $k=0,1,\dots, n+1$. Conservation of mass at the origin requires $q_0 + \sum_{k=1}^{n+1}q_k = \frac{1}{N}$. The exhausted transport budget requires $\sum_{k=1}^{n+1} q_k \norm{v_k}_2 = \sum_{k=1}^{n+1} q_k = \rho = \frac{1}{2N}$.

Applying Condition 2 and utilizing the assumption that $\sum_{i=2}^{N}\hat{z}_i = 0$ yields 
$$
\mathbb{E}_{z \sim \mathbb{Q}}[z] = q_0 \hat{z}_1 + \sum_{k=1}^{n+1} q_k v_k + \frac{1}{N} \sum_{i=2}^{N} \hat{z}_i = \sum_{k=1}^{n+1} q_k v_k = 0.
$$
where we use the assumptions that $\frac{1}{N} \sum_{i=2}^{N} \hat{z}_i=0$ and $\hat z_1=0$. 
Because $\{v_k\}_{k=1}^{n+1}$ are vertices of a regular $n$-simplex centered at the origin, they are affinely independent and sum to zero. Thus, there exists a constant $\bar{q} > 0$ such that $q_k = \bar{q}$ for all $1 \leq k \leq n+1$. From the budget constraint $\sum_{k=1}^{n+1} q_k = \frac{1}{2N}$, it follows that $\bar{q} = \frac{1}{2N(n+1)}$. The remaining mass anchoring at the origin is $q_0 = \frac{1}{N} - \frac{1}{2N} = \frac{1}{2N}$.
Thus, $\mathbb{Q}^\star_{\mathrm{dual}}$ is the unique least-favorable distribution attaining the optimal value of dual DRO problem~\eqref{eq::dual_DRO}.
\end{proof}


\section{Piecewise Affine Case}
\label{sec::piecewise-linear}
For the special case \(p=1\), Assumption~\ref{asp::regularity}\ref{asp::regular::c} implies that either \(\cZ\) must be compact or the loss function \(\ell(x,z)\) must exhibit sublinear growth. Although this assumption is essential for guaranteeing the existence of a saddle point (cf.\ Lemma~\ref{lem::existence_saddle}), it inevitably excludes an important and widely studied class of DRO problems in which the loss is piecewise affine and the transportation cost is induced by the 1-Wasserstein distance. The following proposition provides a (exact or asymptotic) closed-form characterization of the worst-case distribution. The proof is omitted for
brevity as it follows the same path as that in \cite[Theorem~9]{shafieezadeh2019regularization}.

\begin{proposition}
\label{lem::piecewise_linear}
Under Assumptions~\ref{asp::regularity}\ref{asp::regular::sets} and~\ref{asp::regularity}\ref{asp::regular::ell}, suppose that \(\ell_k(z)=a_k^\top z\) with \(a_k \in \RR^m\) for all \(k \in [K]\), let the support set be \(\cZ=\RR^m\), and let the transportation cost be \(c(z,\hat z)=\norm{z-\hat z}\). Define
\[
\cK := \argmax_{k \in [K]} \norm{a_k}_*, 
\qquad
\cK_i := \argmax_{k \in [K]} a_k^\top \hat z_i,
\quad i \in [N],
\]
where \(\norm{\cdot}_*\) denotes the dual norm of \(\norm{\cdot}\). Moreover, for every \(k \in [K]\), let \(v_k\) satisfy \(\norm{v_k}=1\) and \(a_k^\top v_k=\norm{a_k}_*\).
Then, the following statements hold:
\begin{enumerate}
    \item Suppose that there exists a sample index \(i' \in [N]\) and a component \(k' \in [K]\) such that $k' \in \cK \cap \cK_{i'}$.
    Then, the worst-case expectation problem~\eqref{eq::worst_case} admits an optimal distribution, given by
\begin{align}\label{eq::worst_case_explicit}
    \QQ^\star
    =
    \frac{1}{N}\delta_{\hat z_{i'} + N\rho v_{k'}}
    +
    \frac{1}{N}\sum_{i \neq i'} \delta_{\hat z_i}.
\end{align}

    \item Suppose that $\cK \cap \left(\bigcup_{i=1}^N \cK_i\right)=\emptyset$.
    Then, the worst-case expectation problem~\eqref{eq::worst_case} does not admit an optimal distribution. However, for any \(k' \in \cK\), the sequence of distributions
    \begin{align}\label{eq::worst_case_asymp}
        \QQ_\iota
    =
    \frac{1}{N}\sum_{i=1}^N
    \left(1-\frac{1}{\iota}\right)\delta_{\hat z_i}
    +
    \frac{1}{N}\sum_{i=1}^N
    \frac{1}{\iota}\delta_{\hat z_i + \iota c_i v_{k'}},
    \end{align}
    where \(c_i \geq 0\) satisfy \(\frac{1}{N}\sum_{i=1}^N c_i=\rho\), achieves the optimal value asymptotically as \(\iota \to \infty\).
\end{enumerate}
\end{proposition}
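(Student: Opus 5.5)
Because \(\cZ=\RR^m\), the cost is a norm, and every piece is linear, the worst-case expectation problem reduces to linear programming. I will work with the finite reformulation \eqref{eq::equivalence_primal}, whose derivation in Proposition~\ref{prop::nonconvex-equivalence} uses only the disintegration of couplings and Jensen's inequality. That derivation therefore remains valid for the supremum here, even though attainment is not yet known. The first step is an upper bound. For any feasible \(\{\alpha_{ik},v_{ik}\}\), Hölder's inequality gives \(a_k^\top(\hat z_i+v_{ik})\le a_k^\top\hat z_i+\norm{a_k}_*\norm{v_{ik}}\). Bounding \(a_k^\top\hat z_i\le \max_{k}a_k^\top\hat z_i\) and \(\norm{a_k}_*\le L:=\max_k\norm{a_k}_*\), and using the budget constraint, we get
\[
\sup_{\bQ\in\cP}\EE_{\bQ}[\ell(z)] \le \frac1N\sum_{i=1}^N\max_{k}a_k^\top\hat z_i + L\rho .
\]
Equality in this chain requires every atom with positive mass and nonzero displacement to sit on a component \(k\in\cK\cap\cK_i\), with the displacement aligned along \(v_k\). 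This observation is what separates the two cases.

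\emph{Case 1.} We check that \eqref{eq::worst_case_explicit} lies in \(\cP\): its transport cost is \(\frac1N\norm{N\rho v_{k'}}=\rho\). We then evaluate its objective. For \(i\ne i'\), the atom at \(\hat z_i\) contributes \(\max_k a_k^\top\hat z_i\). The displaced atom contributes at least \(a_{k'}^\top\hat z_{i'}+N\rho\,a_{k'}^\top v_{k'}=\max_k a_k^\top\hat z_{i'}+N\rho L\), where we use \(k'\in\cK_{i'}\) and \(k'\in\cK\). Averaging over the \(N\) atoms gives exactly the upper bound, so \(\QQ^\star\) is optimal.

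\emph{Case 2.} Fix \(k'\in\cK\). Each \(\QQ_\iota\) is feasible, since its cost is \(\frac1N\sum_i\frac1\iota\,\iota c_i=\rho\). For its objective, the atom \(\hat z_i\) with weight \(1-1/\iota\) contributes \(\max_k a_k^\top\hat z_i\). The displaced atom with weight \(1/\iota\) contributes at least \(\frac1\iota a_{k'}^\top\hat z_i+c_iL\). Letting \(\iota\to\infty\), the objective tends to at least the upper bound, which shows the supremum equals the bound and is approached asymptotically.

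\emph{Main obstacle: non-attainment in Case 2.} Suppose an optimal \(\bQ\) exists. By Proposition~\ref{prop::nonconvex-equivalence}'s reduction (Jensen), we may take it in the form \eqref{eq::worst-case-Q}, without loss of optimality. Since \(\rho>0\) and \(L\rho>0\) (assuming some \(a_k\ne0\); otherwise the claim is vacuous), equality in the Hölder chain forces positive displacement mass somewhere. That mass must be assigned to a component \(k\) with \(\norm{a_k}_*=L\), i.e.\ \(k\in\cK\). Equality also requires \(a_k^\top\hat z_i=\max_j a_j^\top\hat z_i\), i.e.\ \(k\in\cK_i\), contradicting \(\cK\cap\bigcup_i\cK_i=\emptyset\).

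The delicate point is quantitative. Each use of a non-maximizing pair costs a strictly positive slack, namely either \(\max_j a_j^\top\hat z_i-a_k^\top\hat z_i>0\) or \((L-\norm{a_k}_*)\norm{v}\). Meanwhile the budget \(\rho\) must be spent, because otherwise we lose \(L\) per unit of unused budget. I will make this precise by splitting the objective gap into these two nonnegative sums and showing they cannot both vanish. I expect this careful accounting to be the main technical step.
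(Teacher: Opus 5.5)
Your proof is correct, and it follows a more self-contained route than the one the paper points to. The paper omits the argument and defers to \cite[Theorem~9]{shafieezadeh2019regularization}. That route relies on the Lipschitz-regularization/duality reformulation of the worst-case problem to identify the optimal value $\frac1N\sum_i\max_k a_k^\top\hat z_i+\rho\max_k\|a_k\|_*$. It then checks that the displayed distribution, respectively the sequence $\QQ_\iota$, is feasible and attains, respectively approaches, that value.

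Your H\"older bound on the Jensen-reduced program is exactly the upper-bound (weak-duality) half of this identity, proved by hand. The constructive parts of the two arguments therefore coincide. What your route buys is that it uses no duality theory at all. You also correctly note that only the disintegration/Jensen step of Proposition~\ref{prop::nonconvex-equivalence} is needed, so the failure of Assumption~\ref{asp::regularity}\ref{asp::regular::c} in this setting is harmless. In addition, you give an explicit equality-case proof of non-attainment. The duality route, in turn, yields the value formula for general Lipschitz losses. Through the convex (perspective) reformulation, it also explains non-attainment as optimal transport budget placed on pairs $(i,k)$ with $\alpha_{ik}=0$, i.e.\ mass escaping to infinity.

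One correction to your self-assessment: the ``quantitative accounting'' you flag as the main obstacle is not actually needed. Your Case~2 sequence already shows that the supremum equals the upper bound exactly. After the Jensen reduction, an optimal $\bQ$ has finitely many atoms, and its program value is exactly $\EE_{\bQ}[\ell]$ because each $\ell_k$ is linear. Hence every nonnegative slack in your chain must vanish exactly, and your preceding paragraph already derives the contradiction from this. The proof is complete as written.

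You could even skip the reduction. Under an optimal coupling $\gamma$, the gap equals $\EE_\gamma[\ell(\hat z)+L\|z-\hat z\|-\ell(z)]+L(\rho-\EE_\gamma\|z-\hat z\|)$, and both terms are nonnegative. Equality forces a $\gamma$-non-null set of displaced points whose active piece lies in $\cK\cap\cK_i$, which contradicts the Case~2 hypothesis.
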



\section{Efficient Worst-Case Oracles for Special Cases}
\label{sec::efficient_oracles}

Beyond implementing the worst-case oracle $\mathsf{WCO}_{k}^{\epsilon}(\hat z,u)$ via generic solvers, we highlight special cases where geometric structure yields highly efficient semi-closed-form updates. We focus on \emph{prox-friendly} settings, where the generalized proximal operator can be evaluated efficiently for any $\lambda\geq 0$:
\begin{equation*}
    z^\star(\lambda)
    \;=\;
    \mathrm{prox}_{\lambda,\,c}^{-\ell_k}(\hat{z})
    \;\in\;
    \argmin_{z \in \mathcal{Z}}
    \Bigl\{-\ell_k(z) + \lambda\, c(z, \hat{z})\Bigr\}.
\end{equation*}
We highlight two highly relevant families and refer to \cite{parikh2014proximal} for a broader treatment:
\begin{itemize}[label=$\diamond$,leftmargin=*]
    \item \textbf{Squared Euclidean cost.} When $c(z,\hat{z}) = \tfrac{1}{2}\|z-\hat{z}\|_2^2$ (e.g., the $2$-Wasserstein cost), the proximal operator admits an efficient, often closed-form, solution under general conditions on $-\ell_k$.
    \item \textbf{Bregman divergence cost.} For $c(z,\hat{z}) = D_{\phi}(z,\hat{z}) := \phi(z) - \phi(\hat{z}) - \nabla\phi(\hat{z})^\top(z-\hat{z})$ with strongly convex $\phi$, the operator is tractable if $\nabla\phi$ is explicitly invertible and $-\ell_k$ has matching algebraic structure. A prominent example is the Kullback--Leibler (KL) divergence, where $\phi(z) = \sum_i z_i \log z_i$.
\end{itemize}

Table~\ref{table::prox-operators} collects representative prox-friendly instances. Their relevance to our framework is formalized by the following lemma.

\begin{table}[H]
\centering
\small
\begin{tabular}{@{}lllll@{}}
\toprule
$\mathcal{Z}$ & $-\ell_k(z)$ & $c(z,\hat{z})$ &
  $\mathrm{prox}_{\lambda,c}^{-\ell_k}(\hat{z})$ & Cost \\
\midrule
$\mathbb{R}^m$
  & $\tfrac{1}{2}z^\top Q z + q^\top z$ (convex quad.)
  & $\tfrac{1}{2}\|z-\hat{z}\|_2^2$
  & $(Q + \lambda I)^{-1}(\lambda\hat{z} - q)$
  & $O(m^3)$/$O(m)$\rule{0pt}{2.6ex} \\[4pt]
$\mathbb{R}^m$
  & $\|z\|_1$ (1-norm)
  & $\tfrac{1}{2}\|z-\hat{z}\|_2^2$
  & $z_i = \mathcal{S}_{1/\lambda}(\hat{z}_i)$
  & $O(m)$\rule{0pt}{2.6ex} \\[4pt]
$\Delta^m$
  & $\sum_i z_i \log z_i$ (neg. entropy)
  & $\tfrac{1}{2}\|z-\hat{z}\|_2^2$
  & $z_i=\frac{1}{\lambda}
 W_0\!\left(
 \lambda e^{\lambda\widehat z_i-1-\nu}
 \right),$\;
    $\nu$ from $\textstyle\sum_i z_i = 1$
  & $O\!\left(m\log\tfrac{1}{\varepsilon}\right)$\rule{0pt}{2.6ex} \\[4pt]
$\mathbb{R}_{++}^m$
  & $-\sum_i \log z_i$ (log-barrier)
  & $\tfrac{1}{2}\|z-\hat{z}\|_2^2$
  & $z_i = \dfrac{\hat{z}_i + \sqrt{\hat{z}_i^2 + 4/\lambda}}{2}$
  & $O(m)$\rule{0pt}{2.6ex} \\[6pt]
$\Delta^m$
  & $a^\top z$ (linear)
  & $\mathrm{KL}(z\,\|\,\hat{z})$
  & $z_i = \dfrac{\hat{z}_i\,e^{-a_i/\lambda}}
                 {\sum_j \hat{z}_j\,e^{-a_j/\lambda}}$
  & $O(m)$\rule{0pt}{2.6ex} \\[6pt]
$\mathbb{R}_{++}^m$
  & $\sum_i z_i \log z_i$
  & $\mathrm{KL}(z\,\|\,\hat{z})$
  & $z_i = e^{-1/(1+\lambda)}\hat{z}_i^{\,\lambda/(1+\lambda)}$
  & $O(m)$\rule{0pt}{2.6ex} \\[6pt]
$\mathbb{R}_{++}^m$
  & $-\sum_i \log z_i$
  & $\mathrm{KL}(z\,\|\,\hat{z})$
  & $z_i = \dfrac{1}{\lambda W_0(1/(\lambda \hat z_i))}$
  & $O(m)$\rule{0pt}{2.6ex} \\[4pt]
\bottomrule
\end{tabular}
\caption{Selected prox-friendly instances.
\emph{Notation:}
$[\,\cdot\,]_+ := \max(\cdot,0)$;\;
$\mathcal{S}_\tau(t) := \operatorname{sign}(t)\,[|t|-\tau]_+$ (soft-thresholding);\; $W_0$ is the principal branch of the Lambert--$W$ function;\;
$\Delta^m := \{z \ge 0 : \sum_i z_i = 1\}$ (probability simplex);\; $\mathrm{KL}(z\|\hat z):=\sum_i z_i\log\tfrac{z_i}{\hat z_{i}}-z_i+\hat z_{i}$ (Kullback-Leibler (KL) divergence);\; $\mathbb{R}_{++}^m := \{z: z_i>0, i\in [m]\}$. For the negative-entropy entry, $\nu$ is the normalization multiplier and can be found by one-dimensional bisection.}
\label{table::prox-operators}
\end{table}

\begin{lemma}\label{lem::prox-lagrangian}
Suppose Assumption~\ref{asp::regularity} holds, $c(\cdot,\hat{z})$ is strongly convex, and $c(\cdot,\hat{z}) \geq 0$ with equality at $z = \hat{z}$. For any $u > 0$ and $\hat{z} \in \mathcal{Z}$, the optimal solution to
    \begin{equation*}
        z^\star \in \argmax_{z \in \mathcal{Z}}
        \left\{ \ell_k(z) : c(z, \hat{z}) \leq u \right\}
    \end{equation*}
    is given as follows:
    \begin{itemize}
        \item If $c\left(\mathrm{prox}_{0,\,c}^{-\ell_k}(\hat{z}), \hat{z}\right) \leq u$, then $z^\star = \mathrm{prox}_{0,\,c}^{-\ell_k}(\hat{z})$.

        \item If $c\left(\mathrm{prox}_{0,\,c}^{-\ell_k}(\hat{z}), \hat{z}\right) > u$, then $z^\star = \mathrm{prox}_{\lambda^\star,\,c}^{-\ell_k}(\hat{z})$, 
            where $\lambda^\star > 0$ is the unique solution to 
            $c\!\left(\mathrm{prox}_{\lambda^\star,\,c}^{-\ell_k}(\hat{z}),\,\hat{z}\right) = u$.
    \end{itemize}
\end{lemma}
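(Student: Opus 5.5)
Convex Lagrangian duality for the scalar-constrained problem $\max_{z\in\cZ}\{\ell_k(z): c(z,\hat z)\le u\}$ handles both cases, with the multiplier located by monotonicity of the constraint value along the proximal path. The problem maximizes a concave function over the convex set $\{z\in\cZ: c(z,\hat z)\le u\}$. Since $u>0$ and $c(\hat z,\hat z)=0$, the point $z=\hat z$ is strictly feasible, so Slater's condition holds. Strong duality therefore gives
\[
\max_{z\in\cZ,\,c(z,\hat z)\le u}\ell_k(z)=\min_{\lambda\ge 0}\Big\{\lambda u+\sup_{z\in\cZ}\{\ell_k(z)-\lambda c(z,\hat z)\}\Big\},
\]
and a primal solution is characterized by the KKT conditions: $z^\star$ maximizes $\ell_k-\lambda^\star c(\cdot,\hat z)$ over $\cZ$, i.e.\ $z^\star=\mathrm{prox}_{\lambda^\star,c}^{-\ell_k}(\hat z)$, and $\lambda^\star(c(z^\star,\hat z)-u)=0$ with $z^\star$ feasible. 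For $\lambda>0$ the inner problem has a unique minimizer because $-\ell_k+\lambda c(\cdot,\hat z)$ is strongly convex. Existence of the minimizer follows from strong convexity together with lower semicontinuity.

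The first case is immediate. If the unconstrained maximizer $\mathrm{prox}_{0,c}^{-\ell_k}(\hat z)$ is feasible, it is optimal, which corresponds to $\lambda^\star=0$. Here $\mathrm{prox}_{0,c}^{-\ell_k}(\hat z)$ is interpreted as the chosen element of $\argmax_{z\in\cZ}\ell_k(z)$.

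In the second case, any maximizer of $\ell_k$ over $\cZ$ violates the constraint (at least the designated one does, and I would take the designated one to be the least-cost maximizer so that the dichotomy is clean). Then $\lambda^\star=0$ is impossible, so $\lambda^\star>0$ and complementary slackness forces $c(z^\star,\hat z)=u$. It remains to show that $\varphi(\lambda):=c(\mathrm{prox}_{\lambda,c}^{-\ell_k}(\hat z),\hat z)$ has a unique root of $\varphi(\lambda)=u$ on $(0,\infty)$. I would establish three properties of $\varphi$.

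\begin{enumerate}
\item \emph{Monotonicity.} Let $z_\lambda, z_\mu$ be the proximal points for $\lambda,\mu$. Adding the two optimality inequalities gives $(\lambda-\mu)\big(c(z_\lambda,\hat z)-c(z_\mu,\hat z)\big)\le 0$, so $\varphi$ is nonincreasing.
\item \emph{Continuity.} Strong convexity gives uniqueness of $z_\lambda$ for $\lambda>0$, and Berge's maximum theorem then shows that $\lambda\mapsto z_\lambda$ is continuous.
\item \emph{Limits.} As $\lambda\to\infty$, $z_\lambda\to\hat z$, so $\varphi(\lambda)\to 0<u$. As $\lambda\to0^+$, $\liminf\varphi(\lambda)\ge\varphi(0)>u$.
\end{enumerate}

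The intermediate value theorem then yields a root. For uniqueness, suppose $\varphi(\lambda)=\varphi(\mu)=u$ with $\lambda\ne\mu$. Both $z_\lambda$ and $z_\mu$ would solve the constrained problem, because each satisfies KKT. The optimal set of the constrained problem is convex, so the midpoint of $z_\lambda$ and $z_\mu$ is also optimal. By strong convexity of $c(\cdot,\hat z)$, the midpoint has cost strictly below $u$, so the constraint is slack there. Since $\ell_k$ is concave, a slack optimal point is a local and hence global unconstrained maximizer, and its cost is below $u$. This contradicts the case assumption. Hence $z_\lambda=z_\mu$. Equality of the two proximal points combined with the stationarity conditions for $\lambda\ne\mu$ then forces $\lambda=\mu$, provided a subgradient of $c(\cdot,\hat z)$ at that point is nonzero; this holds because the cost there equals $u>0$ and $c(\cdot,\hat z)$ is minimized only at $\hat z$.

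The main obstacle is the limit $\lambda\to0^+$. I need $z_\lambda$ to converge to, or at least have cost bounded below by that of, a maximizer of $\ell_k$. If $\ell_k$ has multiple maximizers, or if $\argmax_{z\in\cZ}\ell_k$ is empty, this is delicate. My plan is to show directly that $\varphi(\lambda)\le u$ for all small $\lambda$ leads to a contradiction. Along such a sequence the points $z_\lambda$ stay in the compact set $\{c\le u\}$, which is compact since $c\ge\|\cdot\|^p$. Any cluster point maximizes $\ell_k$ over $\cZ$ and has cost at most $u$, contradicting the case hypothesis under the least-cost convention. Weak limits are therefore enough, and no full continuity at $\lambda=0$ is needed.
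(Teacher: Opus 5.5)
Your framework (Slater, KKT, monotonicity of $\varphi(\lambda):=c(\mathrm{prox}_{\lambda,c}^{-\ell_k}(\hat z),\hat z)$) is the same as the paper's one-line KKT argument, and much of it is sound. Existence of a root already follows from KKT: a dual optimal $\lambda^\star>0$ gives $z^\star=\mathrm{prox}_{\lambda^\star,c}^{-\ell_k}(\hat z)$ because the strongly convex prox is unique, and complementary slackness then gives $\varphi(\lambda^\star)=u$. So the IVT, continuity and limit detour is not needed.

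Your least-cost convention for $\mathrm{prox}_{0,c}^{-\ell_k}(\hat z)$ is genuinely necessary. Without it, the second case can occur with $\varphi(\lambda)<u$ for all $\lambda>0$, so no root exists. Your midpoint argument also correctly shows that every root of $\varphi=u$ produces the same point; the same argument shows the constrained optimizer is unique in the second case.

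The gap is the final step, where you deduce $\lambda=\mu$ from $z_\lambda=z_\mu$. The optimality conditions are inclusions $0\in\partial(-\ell_k)(\bar z)+\lambda\,\partial_z c(\bar z,\hat z)+N_{\mathcal Z}(\bar z)$. Different elements of the subdifferentials, or of the normal cone, can balance different values of $\lambda$. Subtracting the two conditions gives $(\lambda-\mu)\nabla_z c(\bar z,\hat z)=0$ only if $\ell_k$ and $c(\cdot,\hat z)$ are differentiable at $\bar z$ and $\bar z\in\operatorname{int}\mathcal Z$.

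This step cannot be repaired, because uniqueness of $\lambda^\star$ is false in the stated generality. The paper's justification, that strong convexity makes $\varphi$ strictly decreasing, fails for the same reason.

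Take $m=1$, $\mathcal Z=\mathbb R$, $\hat z=0$, $c(z,0)=z^2$, $\ell_k(z)=\min\{2z,\,1+z/2,\,2\}$ and $u=4/9$. Every maximizer of $\ell_k$ lies in $[2,\infty)$ and costs at least $4>u$, so the second case applies. Yet $\tfrac43\lambda\in\partial\ell_k(2/3)=[1/2,2]$ for all $\lambda\in[3/8,3/2]$. Hence $\mathrm{prox}_{\lambda,c}^{-\ell_k}(0)=2/3$ and $\varphi(\lambda)=u$ on that whole interval.

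A plateau also occurs with smooth $\ell_k$ at a corner of $\mathcal Z$. Take $\mathcal Z=\{z\in\mathbb R^2: z_1\le 0,\ z_1+z_2\le 0\}$, $\hat z=(0,-\tfrac12)$, $c(z,\hat z)=\|z-\hat z\|_2^2$, $\ell_k(z)=z_1+2z_2-\tfrac14\|z\|_2^2$ and $u=\tfrac14$. The maximizer of $\ell_k$ over $\mathcal Z$ is $(-1,1)$, with cost $13/4>u$. Nevertheless $\nabla\ell_k(0)-\lambda\nabla_z c(0,\hat z)=(1,2-\lambda)\in N_{\mathcal Z}(0)$ for every $\lambda\in[1,2]$, so the prox equals $0$ on that interval.

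What your argument does establish is a corrected statement: a root $\lambda^\star>0$ exists, and every root yields the same optimizer $z^\star=\mathrm{prox}_{\lambda^\star,c}^{-\ell_k}(\hat z)$. Uniqueness of $\lambda^\star$ needs an additional hypothesis, such as the differentiability and interiority condition above.
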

\begin{proof}
    The proof follows from standard KKT conditions \cite{parikh2014proximal, beck2017first}. The uniqueness of $\lambda^\star$ holds because the strong convexity of $c(\cdot, \hat z)$ ensures $\lambda \mapsto c\!\left(\mathrm{prox}_{\lambda,\,c}^{-\ell_k}(\hat{z}),\,\hat{z}\right)$ is continuous and strictly decreasing for $\lambda \geq 0$.
\end{proof}

\begin{remark}
    Strong convexity of $c(\cdot,\hat{z})$ is satisfied globally by the squared Euclidean cost, and by the KL divergence on any compact subset of the interior of $\mathcal{Z}$.
\end{remark}

Thus, implementing $\mathsf{WCO}_{k}^{\epsilon}(\hat{z}, u)$ reduces to a one-dimensional root-finding problem for $\lambda \geq 0$. If the unconstrained maximizer $\mathrm{prox}_{0,\,c}^{-\ell_k}(\hat{z})$ is infeasible, we find $\lambda^\star$ such that
\begin{equation*}
    g(\lambda) := c\!\left(\mathrm{prox}_{\lambda,\,c}^{-\ell_k}(\hat{z}),\,\hat{z}\right) - u = 0.
\end{equation*}
Since $g$ is continuous and strictly decreasing, with $g(0) > 0$ and $\lim_{\lambda \to \infty} g(\lambda) = -u < 0$, a valid bracketing interval exists and bisection converges geometrically. In Table~\ref{table::prox-operators}, evaluating the proximal operator and $c(\cdot,\hat{z})$ takes $O(m)$ time. The only exception is the quadratic case, which incurs $O(m^3)$ naively but reduces to $O(m)$ with eigenvalue pre-factorization of $Q$. Consequently, the overall oracle complexity via bisection is $O\!\left(m\log(1/\epsilon)\right)$.

\end{document}